\numberwithin{equation}{section}
\numberwithin{figure}{section}
\theoremstyle{plain}
\newtheorem{thm}{\protect\theoremname}[section]
\theoremstyle{plain}
\newtheorem{ass}{Assumption}
\theoremstyle{plain}
\newtheorem{conjecture}[thm]{\protect\conjecturename}
\theoremstyle{remark}
\newtheorem{rem}[thm]{\protect\remarkname}
\theoremstyle{definition}
\newtheorem{defn}[thm]{\protect\definitionname}
\theoremstyle{plain}
\newtheorem{lem}[thm]{\protect\lemmaname}
\theoremstyle{plain}
\theoremstyle{plain}
\newtheorem{cor}[thm]{\protect\corollaryname}
\theoremstyle{definition}
\newtheorem{definition}{Definition}[section]
\theoremstyle{plain}
\DeclareMathOperator{\eig}{Eig}
\DeclareMathOperator{\spec}{spec}
\DeclareMathOperator{\tr}{tr}
\DeclareMathOperator{\adj}{adj}
\providecommand{\conjecturename}{Conjecture}
\providecommand{\corollaryname}{Corollary}
\providecommand{\definitionname}{Definition}
\providecommand{\lemmaname}{Lemma}
\providecommand{\propositionname}{Proposition}
\providecommand{\remarkname}{Remark}
\providecommand{\theoremname}{Theorem}
\providecommand{\conjecturename}{Conjecture}
\providecommand{\corollaryname}{Corollary}
\providecommand{\definitionname}{Definition}
\providecommand{\lemmaname}{Lemma}
\providecommand{\propositionname}{Proposition}
\providecommand{\remarkname}{Remark}
\providecommand{\theoremname}{Theorem}
 \newcommand{\noop}[1]{} 
\begin{document}
	\makeatletter
	\providecommand*{\dd}{\@ifnextchar^{\DIfF}{\DIfF^{}}}
	\def\DIfF^#1{
		\mathop{\mathrm{\mathstrut d}}%
		\nolimits^{#1}\gobblespace}
	\def\gobblespace{
		\futurelet\diffarg\opspace}
	\def\opspace{
		\let\DiffSpace\!
		\ifx\diffarg(
		\let\DiffSpace\relax
		\else
		\ifx\diffarg[
		\let\DiffSpace\relax
		\else
		\ifx\diffarg\{
		\let\DiffSpace\relax
		\fi\fi\fi\DiffSpace}
	
	\global\long\def\bg{\boldsymbol{\gamma}}%
	\global\long\def\bz{\boldsymbol{z}}%
	
	\global\long\def\c{\boldsymbol{c}}%
	\global\long\def\Neum{\mathcal{N}}%
	\global\long\def\bj{\boldsymbol{j}}%

	\global\long\def\at{\left.\right|_{\Omega}}%
	\global\long\def\set#1#2{\left\{  #1\,\,:\,\,#2\right\}  }%
	
	\global\long\def\PG{P_{\Gamma}}%
	\global\long\def\mflat{\Sigma_{\mathrm{flat}}}
	\global\long\def\Gammasym{\Gamma,\text{sym}}%
	\global\long\def\EL{\E_{\text{loops}}}%
	
	\global\long\def\LL{L_{\text{loops}}}%
	
	\global\long\def\TS{\mathcal{T}}%
	\global\long\def\A{\mathcal{A}}%
	\global\long\def\Nvc{\mathcal{N}}%
	\global\long\def\G{\mathcal{G}}%
	
	\global\long\def\prop{\mathcal{P}}%
	
	\global\long\def\L{\mathcal{L}}%
	
	\global\long\def\D{\mathcal{D}}%
	
	\global\long\def\U{\mathcal{U}}%
	
	\global\long\def\Rv{\mathcal{R}^{\left(v\right)}}%
	
	\global\long\def\r{\mathcal{R}}%
	
	\global\long\def\d{\partial}%
	
	\global\long\def\dg{\partial\Gamma}%
	
	\global\long\def\do{\partial\Omega}%
	
	\global\long\def\E{\mathcal{E}}%
	
	\global\long\def\V{\mathcal{V}}%
	
	\global\long\def\Vint{\mathcal{V}\setminus\partial\Gamma}%
	
	\global\long\def\Vin{V_{\textrm{in}}}%
	
	\global\long\def\la{\lambda}%
	
	
	\global\long\def\H{H}%
	
	\global\long\def\Z{\mathbb{Z}}%
	
	\global\long\def\R{\mathbb{R}}%
	
	\global\long\def\C{\mathbb{C}}%
	
	\global\long\def\N{\mathbb{N}}%
	
	\global\long\def\Q{\mathbb{Q}}%
	
	\global\long\def\msing{\Sigma^{\mathrm{sing}}}%
	
	\global\long\def\mreg{\Sigma^{\mathrm{reg}}}%
	
		\global\long\def\zsing{Z_{\Gamma}^{\mathrm{sing}}}%
	
	\global\long\def\zreg{Z_{\Gamma}^{\mathrm{reg}}}%

	\global\long\def\lap{\Delta}%
	
	\global\long\def\na{\nabla}%
	
	\global\long\def\opcl#1{\left[#1\right)}%
	
	\global\long\def\clop#1{\left[#1\right)}%
	
	\global\long\def\bs#1{\boldsymbol{#1}}%
	
	\global\long\def\deg#1{\mathrm{deg}(#1)}%
	\global\long\def\o#1{\mathrm{o}(#1)}%
	\global\long\def\t#1{\mathrm{\tau}(#1)}%
	
	\global\long\def\T{\mathbb{T}}%
	
	\global\long\def\TE{\mathbb{T^{\left|\E\right|}}}%
	
	\global\long\def\BGm{\mu_{\vec{l}}}%
	
	\global\long\def\lv{{\boldsymbol{\ell}}}%
	\global\long\def\lve{\ell_{e}}%
	\global\long\def\lvj{\ell_{j}}%
	\global\long\def\l{\ell}%
	
	\global\long\def\ts{t_{S}}%
	
	\global\long\def\Lv{\vec{L}}%
	
	\global\long\def\av{\vec{\alpha}}%
	
	\global\long\def\kv{{\vec{\kappa}}}%
	
	\global\long\def\xv{\textbf{x}}%
	
	\global\long\def\Tv{\vec{\kappa}}%
	
	\global\long\def\dL{d_{\vec{L}}}%
	
	\global\long\def\sgn{\mathrm{sgn}}%
	
	\global\long\def\undercom#1#2{\underset{_{#2}}{\underbrace{#1}}}%
	\global\long\def\fr#1{\{#1\}_{2\pi}}%

	\global\long\def\diag{\textrm{diag}}%
	
	\global\long\def\as{\boldsymbol{v}}%
	\global\long\def\Vas{\boldsymbol{V}_{\text{a-sym}}}
	\global\long\def\Vs{\boldsymbol{V}_{\text{sym}}}
	\global\long\def\mL{\Sigma_{\text{loops}}}
	\global\long\def\mregL{\mreg_{\text{loops}}}
	
	\global\long\def\ones{{\boldsymbol{1}}}%
	\global\long\def\uone{\textrm{u}(1)}%
	\global\long\def\Tw{\widetilde{\T}^{E}}
	\global\long\def\p{\textrm{p}}%
	
	\title{Generic Laplacian eigenfunctions on metric graphs}
	\author{Lior Alon}
	\address{{\small{}Lior Alon, School of Mathematics, Institute for Advanced Study, Princeton, NJ 08540, USA. e-mail: lalon@ias.edu.il}}
	
	\begin{abstract}
	It is known that up to certain pathologies, a compact metric graph with standard vertex conditions has a Baire-generic set of choices of edge lengths such that all Laplacian eigenvalues are simple and have eigenfunctions that do not vanish at the vertices, \cite{Fri_ijm05,BerLiu_jmaa17}. We provide a new notion of strong genericity, using subanalytic sets, that implies both Baire genericity and full Lebesgue measure. We show that the previous genericity results for metric graphs are strongly generic. In addition, we show that generically the derivative of an eigenfunction does not vanish at the vertices either. In fact, we show that generically an eigenfunction fails to satisfy any additional vertex condition. Finally, we show that any two different metric graphs with the same edge lengths do not share any non-zero eigenvalue, for a generic choice of lengths, except for a few explicit cases where the graphs have a common edge-reflection symmetry. The paper concludes by addressing three open conjectures for metric graphs that can benefit from the tools introduced in this paper.  
	\end{abstract}
	
	\maketitle
	\section{Introduction}A metric graph $ (\Gamma,\lv) $ is a finite graph $ \Gamma $ of $ N $ edges and a positive vector $ \lv=(\ell_{1},\ldots,\ell_{N}) $, assigning a positive length $ \ell_{j} $ to every edge $ e_{j} $. Each edge $ e_{j} $ is identified with the interval $ [0,\ell_{j}] $. That is, we fix an (arbitrary) orientation on $ e_{j} $ and then parameterize $ e_{j} $ by arc-length parameter $x_{j}\in [0,\ell_{j}] $. In this way, $ (\Gamma,\lv) $ is a one-dimensional Riemannian manifold with singularities equipped with a uniform metric $ dx_{j} $ on each edge $ e_{j} $. Given a function $f: (\Gamma,\lv)\to\C $ we consider its restriction to each edge $ e_{j} $ as a function on the interval, $ f|_{e_{j}}:[0,\ell_{j}]\to\C $. The (one-dimensional) Laplacian $ \Delta $ acts edgewise, 
	\[(\Delta f)|_{e_{j}}(x_{j}):=-\frac{d^{2}}{d_{x_{j}}^{2}}f|_{e_{j}}(x_{j}).\] 
	We require the functions to satisfy standard matching conditions on the vertices (see Definition \ref{def: standard vertex conditions}), in which case the Laplacian has a discrete non-negative spectrum and each eigenspace is spanned by real eigenfunctions. We refer to the eigenvalues and eigenfunctions of the Laplacian as the eigenvalues and eigenfunctions of $ (\Gamma,\lv) $. Consider
	\[\spec(\Gamma,\lv):=\set{k\in\R_{\ge0}}{k^2 \mbox{   is an eigenvalue of   }(\Gamma,\lv)},\]
	which we treat as a multiset where each eigenvalue is repeated according to its multiplicity.
	In this paper we analyze the eigenvalues and eigenfunctions of $ (\Gamma,\lv) $, while $ \Gamma $ is fixed and $ \lv $ varies along $ \R_{+}^{N} $, and point out some properties that hold generically in $ \lv $.

	Generic spectral properties for metric graphs were first studied by Friedlander in 2005 \cite{Fri_ijm05}. Motivated by the Sturm--Liouville theory on intervals and the genericity works of Albert \cite{Albert_thesis72} and Uhlenbeck \cite{Uhl_bams72} on compact manifolds, Friedlander showed that with one exception,\footnote{The exceptional graphs are polygons,i.e., the circle $ S^1 $ with finitely many degree-two vertices, which are removable singularities.} for any graph $ \Gamma $ of $ N $ edges and a generic $ \lv\in\R_{+}^N $, all eigenvalues of $ (\Gamma,\lv) $ are simple. The next result was due to Berkolaiko and Liu \cite{BerLiu_jmaa17} in 2017. Their motivation was a series of works on zeros of eigenfunctions, where the repeated assumption is that the eigenvalue is simple and the eigenfunction does not vanish at any vertex; see \cite{GnuSmiWeb_wrm04, Ber_cmp08} and the references therein. Berkolaiko and Liu \cite{BerLiu_jmaa17} showed that for any graph $ \Gamma $ that has no loops,\footnote{If we neglect degree-two vertices (which are removable singularities), then a loop is an edge connecting a vertex to itself. } and for a generic $ \lv $, any eigenfunction of $ (\Gamma,\lv) $ would not vanish at any vertex. Let us remark that the result of \cite{BerLiu_jmaa17} also treats graphs with loops and other vertex conditions. In both \cite{Fri_ijm05} and  \cite{BerLiu_jmaa17}, the term ``generic $ \lv $" means that $ \lv $ belongs to a set $ G\subset \R_{+}^{N} $ that is \emph{Baire generic}; namely, $ G $ contains a countable intersection of open dense sets. A Baire-generic set is generic in a topological sense but may be very small in terms of measure theory. In fact, a Baire-generic set can have a zero Lebesgue measure.\footnote{For example, number the rational points in $ \R_{+}^{N} $ and let $ B_{j,\epsilon} $ be the open ball of volume $ \epsilon 2^{-j} $ centered at the $ j $-th rational point. The set $ O_{\epsilon}=\bigcup_{j\in\N} B_{j,\epsilon} $ is open dense and has Lebesgue measure at most $ \epsilon $. Then $ G=\bigcap_{n\in\N} O_{\frac{1}{n}} $ is Baire generic and has measure zero. By contrast, its complement $ G^{c} $ is a full measure set which is not Baire generic.} If, for example, we choose $ \lv\in (0,T)^{N} $ uniformly at random for some $ T\gg 1 $, one may ask:
	\begin{center}
		\textbf{Does $(\Gamma,\lv) $ satisfy these generic properties with high probability?}
	\end{center}
	We would like to say that the ``good" set $ G $ is both Baire generic and has full Lebesgue measure, in which case the above question can be answered in the affirmative. To this end, we define a new notion of \emph{strong genericity}, which classifies $ G $ in terms of \emph{subanalytic sets} (see \cite{Gab968projections,BieMil1988semianalytic} or definition \ref{def: subanalytic}). Heuristically, a subanalytic set is a set that locally can be defined as a projection of a level set (or sub-level set) of a real analytic function.  
	\begin{defn}[Strong genericity]
		We say that $ G\subset \R_{+}^{N} $ is \emph{strongly generic} if its complement in $ \R_{+}^{N} $ is a countable union of closed subanalytic sets of positive codimension. 
	\end{defn}
	We will show later that a strongly generic $ G $ is both Baire generic and has full Lebesgue measure. We believe that strong genericity should appear in various eigenvalue problems for operators that depends analytically on finitely many parameters. For example, generic sets of similar nature appear in genericity results for Laplacian eigenvalues of planar polygons and hyperbolic triangles \cite{JudHil2009generic,HilJud2018hyperbolic}.

	 Another type of genericity result is of an ergodic nature. We say that $ \lv $ is \emph{$ \Q $-independent} if $ \lv\cdot\textbf{q}\ne0 $ for every non-zero rational vector $ \textbf{q}\in\Q^{N}\setminus\{0\} $. The behavior of the spectrum of $ (\Gamma,\lv) $ when $ \lv $ is $ \Q $-independent is believed to be ``almost chaotic" \cite{KotSmi_prl97,BarGas_jsp00}. 
	 \begin{defn}[Ergodic genericity]
	 	Let $ \prop $ be some property of the eigenpairs of $ (\Gamma,\lv) $. Define the set $ \spec(\Gamma,\lv,\prop) $ of square root eigenvalues $ k\in \spec(\Gamma,\lv) $ with a corresponding eigenfunction $ f $ such that $ (k^2,f) $ satisfies $ \prop $. We say that $ \prop $ is \emph{ergodically generic}, if, for any $ \Q $-independent $ \lv $,
	 	\[\lim_{T\to\infty}\frac{|\spec(\Gamma,\lv,\prop)\cap[0,T]}{|\spec(\Gamma,\lv)\cap[0,T]|}=1.\] 
	 	Namely, for \textbf{any} $\Q $-independent $ \lv $, \textbf{almost every} eigenpair of $ (\Gamma,\lv) $ satisfies $ \prop $.  
	 \end{defn}
 The Baire generic results of \cite{Fri_ijm05,BerLiu_jmaa17} were shown to be ergodically generic (without using this term) in \cite{AloBanBer_cmp18}. As the term ``ergodic" suggests, there is an ergodic system in the background. Let $ \T^{N} $ be the subset of $ \bz\in\C^{N} $ with $ |z_{j}|=1 $ for all $ j $. The \emph{secular manifold} of a graph $ \Gamma $ with $ N $ edges is a hypersurface $ \Sigma(\Gamma)\subset\T^{N} $ such that for any $ \lv\in\R_{+}^{N} $ and $ k\ge 0 $,
	 \[\exp(ik\lv):=(e^{ik\ell_{1}},e^{ik\ell_{2}},\ldots,e^{ik\ell_{N}})\in\Sigma(\Gamma)\iff k\in\spec(\Gamma,\lv).\]
	  If $ \lv $ is $\Q $-independent, the sequence of $ \exp(ik\lv) $ for $ k\in\spec(\Gamma,\lv) $ equidistributes with respect to a certain measure on $ \Sigma(\Gamma) $ \cite{BarGas_jsp00,BerWin_tams10,CdV_ahp15}. In this ergodic case, certain averages over $ \spec(\Gamma,\lv) $ can replaced by integration over $ \Sigma(\Gamma) $ \cite{AloBanBer_cmp18, BarGas_jsp00, GnuKeaPio_ap10,GnuSmi_ap06}.

	In this paper we extend the previous genericity results for metric graphs. Consider the properties discussed so far: 
	 \begin{enumerate}
		\item the eigenvalue $ k^2 $ is simple, 
		\item the eigenfunction $ f $ does not vanish at any vertex,
	\end{enumerate}
and the following additional property,
\begin{enumerate}
	\item[(3)] the derivative of $ f $ does not vanish at vertices of degree larger than one.\footnote{On vertices of degree one the derivative vanish due to the vertex condition. }   
\end{enumerate} 
In Theorem \ref{thm: non vanishing}, we show that properties (1), (2), and (3) are strongly and ergodically generic, if we assume that the graph has no loops. The case of graphs with loops is treated later by excluding eigenfunctions that are supported on a single loop.  

 At this point, we may observe that properties (2) and (3) can be interpreted as additional (scaling-independent) vertex conditions. Intuitively, adding another vertex condition should make the system over-determined and so we do not expect to see, generically, eigenfunctions that satisfy the additional condition. However, certain conditions, such as $ f(v)=f(u) $ for different vertices, may appear infinitely often in the presence of $ \lv $-independent reflection symmetries. Excluding such symmetries leads to a dichotomy, as Theorem \ref{thm: polynomial vertex conditions} suggests. Every scaling-independent condition is either satisfied for all eigenfunctions of all simple eigenvalues for every $ \lv $, or it is (both strongly and ergodically) generically never satisfied. This dichotomy extends to polynomial scaling-invariant vertex conditions. 
	 
	 Next, we go back to a well known theorem on the spectrum of metric graphs, whereby $ (\Gamma,\lv) $ can be constructed from the spectrum, as long as $ \lv $ is $ \Q $-independent \cite{GutSmi_jpa01,KurNow_jpa05}. In particular, two different metric graphs have different spectra $ \spec(\Gamma,\lv)\ne \spec(\Gamma',\lv')$ under the assumption that $ \lv $ and $ \lv' $ are $ \Q $-independent. However, a priori $ \spec(\Gamma,\lv)$ and $\spec(\Gamma',\lv') $ may only disagree for a small set of eigenvalues. In Theorem \ref{thm: disjoint spectrum}, we compare the spectra of distinct graphs $ 
	 \Gamma $ and $ \Gamma' $ with equal edge lengths $ \lv=\lv' $. We show that except for some pathological cases, for any two distinct graphs of $ N $ edges, there is a strongly generic set of $ \lv $'s for which 
	 \[\spec(\Gamma,\lv)\cap(\Gamma',\lv)=\{0\}.\]
	 Moreover, for any $ \Q $-independent $ \lv $,
	 	\[\lim_{T\to\infty}\frac{|\spec(\Gamma,\lv)\cap(\Gamma',\lv)\cap[0,T]}{|\spec(\Gamma,\lv)\cap[0,T]|}=1.\]

The general strategy in the genericity proofs of all the theorems mentioned above is similar. Consider the torus subset 
\[\Sigma(\Gamma):=\set{\exp(ik\lv)\in\T^{N}}{k\in\spec(\Gamma,\lv)},\qquad\exp(ik\lv):=(e^{ik\ell_{1}},e^{ik\ell_{2}},\ldots,e^{ik\ell_{N}}).\]
A main fact being used (see Lemma \ref{lem: main genericty lem}), is that whenever $ B\subset\Sigma(\Gamma) $ is a subvariety\footnote{By ``subvariety of $ \Sigma(\Gamma) $" we mean the intersection of $ \Sigma(\Gamma)\subset\C^{N} $ with a common zero set of finitely many polynomials in $ \C^{N} $.} of $ \Sigma(\Gamma) $ with positive codimension, then it is strongly and ergodically generic to have $\exp(ik\lv)\notin B  $. Hence, to prove that a certain property $ \prop $ is generic, we first need to construct a ``bad" subvariety $ B $ that captures the negation of $ \prop $. We then need to show that $ B $ has a positive codimension in $ \Sigma(\Gamma) $. Let us elaborate on these two steps.\\

First step: Constructing a subvariety $ B $ for a given property $ \prop $. For the multiplicity of eigenvalues, $ B $ will be the singular set of $ \Sigma(\Gamma) $. To capture properties of eigenfunctions we introduce the trace space. To every eigenpair $ (f,k^2) $ of $ (\Gamma,\lv) $ we associate a vector $ \tr_{k}(f)\in\C^{4N} $, called the \emph{scale-invariant trace}, which consists of a 4-tuple $ (A_{j},B_{j},C_{j},D_{j}) $ for every edge $ j $. These are the amplitudes for which the restriction of $ f $ to $ e_{j} $ is
\[f|_{e_{j}}(t_{j})=A_{j}\cos(kt_{j})+B_{j}\sin(kt_{j})=C_{j}\cos(k(\ell_{j}-t_{j}))+D_{j}\sin(k(\ell_{j}-t_{j})).\]
The collection of $ A $'s and $ C $'s is often called the Dirichlet trace and the Neumann trace is the collection of $ B $'s and $ D $'s, scaled by $ k $. The \emph{trace space} is defined as
	\[\TS(\Gamma):=\set{(\exp(ik\lv),\tr_{k}(f))\in\Sigma(\Gamma)\times\C^{4N}}{(k^2,f) \mbox{  is an eigenpair of }(\Gamma,\lv)}.\]
Denoting the points in $ \TS(\Gamma) $ by $ (\bz,\xv)=(\exp(ik\lv),\tr_{k}(f)) $, we show that the $ \xv $ fiber above any regular point $ \bz\in\Sigma(\Gamma) $ is a one-dimensional complex vector space (spanned by a real vector), and we analyze its $ \bz $ dependence. Properties of eigenfunctions are then carried over to properties of $ \xv $ fibers, which can be projected down to $\bz \in\Sigma(\Gamma) $. This general procedure associates a ``bad" subvariety $ B $ to a given property $ \prop $.\\

Second step: Showing that $ B $ has a positive codimension. To this end, we use the irreducible structure of $ \Sigma(\Gamma) $, which was conjectured by Colin de Verdière \cite{CdV_ahp15} and recently proved by Kurasov and Sarnak \cite{KurasovBook,KurSar2022}. Neglecting some pathologies for the moment, this result provides the needed dichotomy; any $ B $ subvariety of $ \Sigma(\Gamma) $ is either equal to $ \Sigma(\Gamma) $ or it has a positive codimension in $ \Sigma(\Gamma) $.\\

The structure of the paper is as follows. Section 2 provides some necessary preliminaries to Section 3, in which the main results are presented. In Section 4 we construct and analyze the secular manifold $ \Sigma(\Gamma) $ and the trace space $ \TS(\Gamma) $. Section 5 deals with the irreducible structure of $ \Sigma(\Gamma) $ as shown by Kurasov and Sarnak \cite{KurasovBook,KurSar2022}. In Section 6 we prove the main results. In Section 7 we suggest future work. In particular, three open conjectures regarding metric graphs that may benefit from the trace space and genericity concepts are introduced.    
\subsection*{Acknowledgments}
The author would like to thank Peter Sarnak, Mark Goresky, Karen Uhlenbeck, Pavel Kurasov, and Ram Band for insightful discussions, important remarks and relevant references.    
The author was supported
by the Ambrose Monell Foundation and the Institute for Advanced Study.

	 
	
	

	\section{Preliminaries and notation}
	\subsection{Metric graphs}
	Let $ (\Gamma,\lv) $ be a metric graph with $ N $ edges. It is convenient to describe functions on $ (\Gamma,\lv) $ in terms of their restrictions to edges. Using the arc-length parameter $ t_{j}\in[0,\ell_{j}] $ along each edge $ e_{j} $, we can write the restrictions of a function $ f:(\Gamma,\lv)\to\C $ to edges as univariate functions, i.e.,
		\[f|_{e_{j}}:[0,\ell_{j}]\to\C,\qquad j=1,2,\ldots,N.\]
	In this way, we can associate an $ L^2 $ Hilbert space and an $ H^2=W^{2,2} $ Sobolev space to each edge to get
	\[L^2(\Gamma,\lv):=\oplus_{j=1}^{N} L^2([0,\ell_{j}]),\quad\mbox{and}\quad H^2(\Gamma,\lv):=\oplus_{j=1}^{N} H^2([0,\ell_{j}]). \]
	It is a standard result that functions in $ H^2([0,\ell_{j}]) $ are $ C^1 $ (i.e., have a continuous derivative). Given $ f\in H^2(\Gamma,\lv) $ we denote its derivative along each edge by $ f'|_{e_{j}} $. Its second derivative $ f''|_{e_{j}} $ is defined as a weak derivative. The non-negative Laplacian $ \Delta: H^2(\Gamma,\lv)\to L^2(\Gamma,\lv)$ acts edgewise by
	\[(\Delta f)|_{e_{j}}=-f''|_{e_{j}},\qquad j=1,2,\ldots, N.\]   
	To get a self-adjoin operator we need to specify a choice of \emph{vertex conditions} (in analogy with boundary conditions). The Dirichlet and Neumann traces of $ f|_{e_{j}} $ are defined by,
	\begin{align*}
		\gamma_{D}(f|_{e_{j}}):= & (f|_{e_{j}}(0),f|_{e_{j}}(\ell_{j}))\in\C^{2},\quad\mbox{and}\\
		\gamma_{N}(f|_{e_{j}}):= & (f'|_{e_{j}}(0),-f'|_{e_{j}}(\ell_{j}))\in T_{0}\C\times T_{\ell_{j}}\C=\C^{2}.
	\end{align*}
	The sign in the last derivative reflects that it is a normal (or outgoing) derivative. Given $ f\in H^2(\Gamma,\lv) $, its Dirichlet trace $ \gamma_{D}(f)\in\C^{2N} $ is the collection of the Dirichlet traces $ \gamma_{D}(f|_{e_{j}}) $ for all edges. The Neumann trace $ \gamma_{N}(f)\in\C^{2N} $ is defined in the same manner. Given $ f,g\in H^2(\Gamma,\lv)$ we calculate, using integration by parts, 
	\[\langle\Delta f,g\rangle_{L^2(\Gamma,\lv)}-\langle f,\Delta g\rangle_{L^2(\Gamma,\lv)}=\langle\gamma_{N}(f),\gamma_{D}(g)\rangle_{\C^{2N}}-\langle\gamma_{D}(f),\gamma_{N}(g)\rangle_{\C^{2N}}.\]
	Hence, $ \Delta $ is self-adjoint when restricted to a dense domain in $ H^2(\Gamma,\lv) $ on which the above right-hand side vanish. To this end, we impose \emph{vertex conditions}; a restriction of the traces $ (\gamma_{D}(f),\gamma_{N}(f)) $ to a $ 2N $-dimensional subspace of $ \C^{4N} $ on which the sesquilinear form on the right-hand side vanishes.\footnote{If we restrict to $ \R^{4N} $ instead of $ \C^{N} $, then this bi-linear form is the standard symplectic form on $ \R^{4N} $ and a $ 2N $-dimensional subspace on which it vanishes is called \emph{Lagrangian}; See \cite{BerLatSuk2019limits}.} We only consider the \emph{standard vertex conditions} (also known as Neumann or Kirchhoff). 
\begin{defn}[Standard Vertex Conditions]\label{def: standard vertex conditions}
	Let $ v $ be a vertex and let $ \E_{v,o} $ be the set of edges whose origin is $ v $, such that $ t_{j}=0 $ at $ v $. Let $ \E_{v,t} $ be the edges that terminate at $ v $, such that $ t_{j}=\ell_{j} $ at $ v $. The standard vertex conditions at $ v $ are
	\begin{align*}
		\mbox{Continuity : }\qquad& \forall e_{j}\in \E_{v,o},\forall e_{i}\in\E_{v,t},\quad f|_{e_{j}}(0)=f|_{e_{i}}(\ell_{i})=:f(v),\quad\mbox{and}\\
		\mbox{Balanced derivatives : }\qquad& \sum_{e_{j}\in\E_{v,o}}f'|_{e_{j}}(0)+\sum_{e_{i}\in\E_{v,t}}(-f'|_{e_{i}}(\ell_{i}))=0.
	\end{align*}
We define $ \mathcal{D}_{\mathrm{standard}}(\Gamma,\lv)\subset H^2(\Gamma,\lv) $ to be the subspace of functions that satisfy the standard vertex conditions at every vertex. 
\end{defn}
The Laplacian, restricted to $ \mathcal{D}_{\mathrm{standard}}(\Gamma,\lv) $ is self-adjoint, non-negative, and has a discrete\footnote{Here we assume that the graph is finite.} spectrum with eigenvalues of finite multiplicity and real eigenfunctions; see \cite{BerKuc_graphs, GnuSmi_ap06} for a thorough review of the subject. Also, zero is a simple eigenvalue whenever $ \Gamma $ is connected. From here on, we focus on the eigenvalue problem
\[\Delta f =k^2 f,\qquad f\in \mathcal{D}_{\mathrm{standard}}(\Gamma,\lv).\] 
We refer to solutions $ (k^2,f)\in\R_{\ge0}\times \mathcal{D}_{\mathrm{std}}(\Gamma,\lv)$ as eigenpairs of $ (\Gamma,\lv) $, where $ k^2 $ is an eigenvalue and $ f $ an eigenfunction $ (\Gamma,\lv) $. By a common abuse of terminology, we refer to the set of non-negative square roots of eigenvalues as the spectrum, 
\[\spec(\Gamma,\lv)=\set{k\in\R_{\ge0}}{k^2\mbox{  is an eigenvalue of  } (\Gamma,\lv)},\]
that should be understood as a multi-set where each $ k $ is repeated according to its multiplicity. The multiplicity of $ k $ is the dimension of the associated eigenspace 
\[\eig(\Gamma,\lv,k):=\set{f\in \mathcal{D}_{\mathrm{standard}}(\Gamma,\lv)}{\Delta f =k^2 f}.\]
An eigenvalue is called \emph{simple} when $ \dim(\eig(\Gamma,\lv,k))=1 $, and \emph{multiple} when $ >1 $.\\

If we scale a graph $ (\Gamma,\lv)\mapsto (\Gamma,r\lv)$ by some positive factor $ r>0 $, the functions in $ \mathcal{D}_{\mathrm{standard}}(\Gamma,\lv) $ are mapped to $ \mathcal{D}_{\mathrm{standard}}(\Gamma,r\lv) $ by  $ f\mapsto r.f $ with $ (r.f)|_{e_{j}}(rt_{j})=f|_{e_{j}}(r_{j}) $. The spectrum and the traces are scaled as follows:
\begin{align*}
	\spec(\Gamma,r\lv)= & \set{\frac{k}{r}}{k\in\spec(\Gamma,\lv)},\\
	(\gamma_{D}(r.f),\gamma_{N}(r.f))=& (\gamma_{D}(f),\frac{1}{r}\gamma_{N}(f)).
\end{align*}
\begin{defn}\label{def: scale invaraint trace}
	Given an eigenpair $ (k^2,f) $ of $ (\Gamma,\lv) $, define its \emph{scale-invariant trace} by
	\[\tr_{k}(f):=(\gamma_{D}(f),\frac{1}{k}\gamma_{N}).\]
	In the case $ k=0 $, the only eigenfunctions are constant\footnote{We assume that the graph is connected.} and we define $ \frac{1}{k}\gamma_{N}=0 $. Unless stated otherwise, from now on, when referring to the trace of a function, we will always consider the scale-invariant trace.
\end{defn}
\begin{rem}\label{rem: bijection}
	The definition of $ \tr_{k}(f) $ in the Introduction agrees with Definition \ref{def: scale invaraint trace}, up to a reordering of the coordinates (change of basis). In the Introduction, $ \tr_{k}(f) $ is defined as the collection of 4-tuples $ (A_{j},B_{j},C_{j},D_{j}) $ per edge $ e_{j} $, such that the restriction of $ f $ to $ e_{j} $ is given by
	\begin{align*}
		f|_{e_{j}}(t_{j})= & A_{j}\cos(kt_{j})+B_{j}\sin(kt_{j})\\
		= & C_{j}\cos(k(\ell_{j}-t_{j}))+D_{j}\sin(k(\ell_{j}-t_{j})).
	\end{align*}
\end{rem}
\begin{rem}
	The fact that the standard vertex conditions are decoupled into equations on $ \gamma_{D} $ and on $ \gamma_{N} $ separately makes them scaling-invariant. This means that $ (\gamma_{D}(f),\gamma_{N}(f)) $ satisfies the vertex conditions if and only if $ \tr_{k}(f) $ does. 
\end{rem}

We may consider the trace as a map $ \tr_{k}:\mathcal{D}_{\mathrm{standard}}\to\C^{4N} $. This is a linear map and its restriction to an eigenspace $ \eig(\Gamma,\lv,k) $ is injective with an explicit inverse, as can be seen in Remark \ref{rem: bijection}. A solution $ (\lv,k,f)$ such that $ (k^2,f) $ is an eigenpair of $ (\Gamma,\lv) $ can be parameterized by the vector $ (\lv,k,\tr_{k}(f)) $. The space of solutions associated to a graph $ \Gamma $, ranging over all possible $ \lv\in\R_{+}^{N} $, may be parameterized as 
\[X(\Gamma):=\set{(\lv,k,\tr_{k}(f))\in\R_{+}^{N}\times\R_{\ge 0}\times \C^{4N}}{k\in\spec(\Gamma,\lv),~~f\in\eig(\Gamma,\lv,k)}.\]
The space of solutions $ X(\Gamma) $ has the following symmetry, denoting $ \exp(ik\lv):=(e^{ik\ell_{1}},\ldots,e^{ik\ell_{N}}) $,
\[ \exp(ik\lv)=\exp(ik'\lv')\quad\Rightarrow\quad\tr_{k}(\eig(\Gamma,\lv,k))= \tr_{k}(\eig(\Gamma,\lv',k')),\]
which can be derived from the explicit expression $ f $ in terms of $ \tr_{k}(f) $ in Remark \ref{rem: bijection}. We define the \emph{trace space} of $ \Gamma $ as the quotient of $ X(\Gamma) $ by this symmetry.
\begin{defn}
	Given a graph $ \Gamma $ of $ N $ edges, we define the \emph{trace space} of $ \Gamma $ by
	\[\TS(\Gamma):=\set{(\exp(ik\lv),\tr_{k}(f))\in\T^{N}\times \C^{4N}}{k\in\spec(\Gamma,\lv),~~f\in\eig(\Gamma,\lv,k^2)}.\]
	The \emph{secular manifold} (a manifold with singularities) is the projection of $ \TS(\Gamma) $ onto the $ \T^{N} $ coordinates, i.e.,
	\[\Sigma(\Gamma):=\set{\exp(ik\lv)\in\T^{N}}{k\in\spec(\Gamma,\lv)}.\]
\end{defn}
\begin{rem}
	It is common to work with $ \R^{N}/2\pi\Z^{N} $ instead of $\T^{N}  $, in which case the term ``secular manifold" refers to $ \set{\xv\in\R^{N}/2\pi\Z^{N}}{\exp(i\xv)\in\Sigma(\Gamma)} $. 
\end{rem}
\subsection{Semianalytic and subanalytic sets} We introduce the notions of \emph{semianalytic} and \emph{subanalytic} sets, as introduced in the works of Gabrielov \cite{Gab968projections} and Bierstone and Milman \cite{BieMil1988semianalytic}. Heuristically, these are sets in $ \R^{N} $ that can be constructed from equalities and inequalities of real analytic functions. The motivating example is that in 1D, zero sets of real analytic functions have no accumulation points. In higher dimensions this is not the case but we still expect sets of this form to have a ``rigid" structure and ``good" properties. In particular, such sets have an integer-valued Hausdorff dimension, and exhibit no fractal or Cantor set behavior. We note that in a more modern language, the class of subanalytic sets is a special case of \emph{O-minimal  theory}, but we will not make use of this fact. 
\begin{definition}[Semianalytic Set]\label{def: analytic set}
		A set $ X\subset\R^{N} $ is called \emph{semianalytic} if any point $ \xv\in X $ has a neighborhood $ \Omega_{\xv}\subset\R^{N} $ such that $ X\cap\Omega_{\xv} $ is a finite union and intersection of subsets of the form 
		\[\set{\xv'\in\Omega_{\xv}}{f_{i}(\xv')=0,~~g_{j}(\xv')<0,~~~~i=1,2,\ldots,i_{0},~~~j=1,2,\ldots,j_{0}},\]
		where the functions $ f_{i} $ and $ g_{j} $ are real analytic on $ \Omega_{\xv} $ for all $ i $ and $ j $. Notice that having $ g_{j}\le 0 $ instead of $ g_{j}<0 $ is allowed due to the finite union.
	\end{definition}
Recall that a relatively compact set, is a set whose closure is compact. We may now define subanalytic sets. 
	\begin{definition}[Subanalytic Set]\label{def: subanalytic}
		A \emph{subanalytic} set $ X\subset\R^{N} $ is such that any point $ \xv\in X $ has a compact neighborhood $ \Omega_{\xv}\subset\R^{N} $ such that $ X\cap\Omega_{\xv}$ is the projection of a relatively compact semianalytic set. Namely, $ X\cap\Omega_{\xv}$ is a finite union and intersection of sets of the form 
	\[\set{\xv'\in\Omega_{\xv}}{\exists  \textbf{y}\in I ~~\mbox{  s.t.  } f_{i}(\xv',\textbf{y})=0,~~g_{j}(\xv',\textbf{y})<0,~~~~i=1,2,\ldots,i_{0},~~~j=1,2,\ldots,j_{0}},\]
	where $ I $ is a compact subset of $ \R^{m} $ for some $ m\in\N $. 
\end{definition}
It is not hard to verify that a subanalytic set $ X $ is closed if all the local inequalities defining it are non-strict, i.e., $ g_{j}\le0 $ instead of $ g_{j}<0 $.
	\section{Main results}
 Given a graph $ \Gamma $ and some property of eigenpairs denoted by $ \prop $, let $ G $ be the set of $ \lv\in\R_{+}^{N} $ for which every eigenpair of $ (\Gamma,\lv) $ satisfies $ \prop $. Here and throughout, the term ``every eigenpair" excludes the eigenpair with $ k=0 $ and constant $ f $. Let $ \spec(\Gamma,\lv,\prop) $ be the subset of $\spec(\Gamma,\lv) $ of $ k $ values for which there is an eigenpair $ (k^2,f) $ satisfying $ \prop $. 
 \begin{definition}[Strong and Ergodic Genericity]\label{def: strong and ergodic genericity}
 	We say that the property $ \mathcal{P} $ is
 	\begin{enumerate}
 		\item \emph{Strongly generic}: if the complement $ G^{c}=\R_{+}^{N}\setminus G $ is a countable union of closed subanalytic sets of positive codimension in $ \R_{+}^{N} $. We also call $ G $ a strongly generic subset of $ \R_{+}^{N} $ in such a case. 
 		\item \emph{Ergodically generic}: if for any $ \Q $-independent $ \lv\in \R_{+}^{N} $,
 		\[\lim_{T\to\infty}\frac{|\spec(\Gamma,\lv,\prop)|}{|\spec(\Gamma,\lv)|}=1.\]
 	\end{enumerate} 
 \end{definition}
\begin{rem}[Strong genericity implies Baire genericity and full measure]
	The reason that strongly generic $ G $ is also Baire generic and has full measure is as follows. A subanalytic set has a well-defined integer-valued Hausdorff dimension, and therefore a closed subanalytic set of positive codimension in $ \R^{N} $ is a closed nowhere dense set of zero Lebesgue measure. Hence, a countable union of closed subanalytic sets of positive codimension has a complement that is both Baire generic and of full measure.
\end{rem}
\begin{ass}\label{ass: assumptions}
	The graph $ \Gamma $ is finite, connected, has no vertices of degree two (removable singularities), and is not a loop graph (i.e., has a vertex of degree different than two).
\end{ass}
Under Assumption \ref{ass: assumptions}, a \emph{loop} is an edge connecting a vertex to itself (not to be confused with a simple closed path). We say that a trace $ \tr_{k}(f) $ is \emph{non vanishing} if it has no zeros, except for entries corresponding to derivatives at vertices of degree one.
\begin{figure}
	\includegraphics[height=0.2\paperheight]{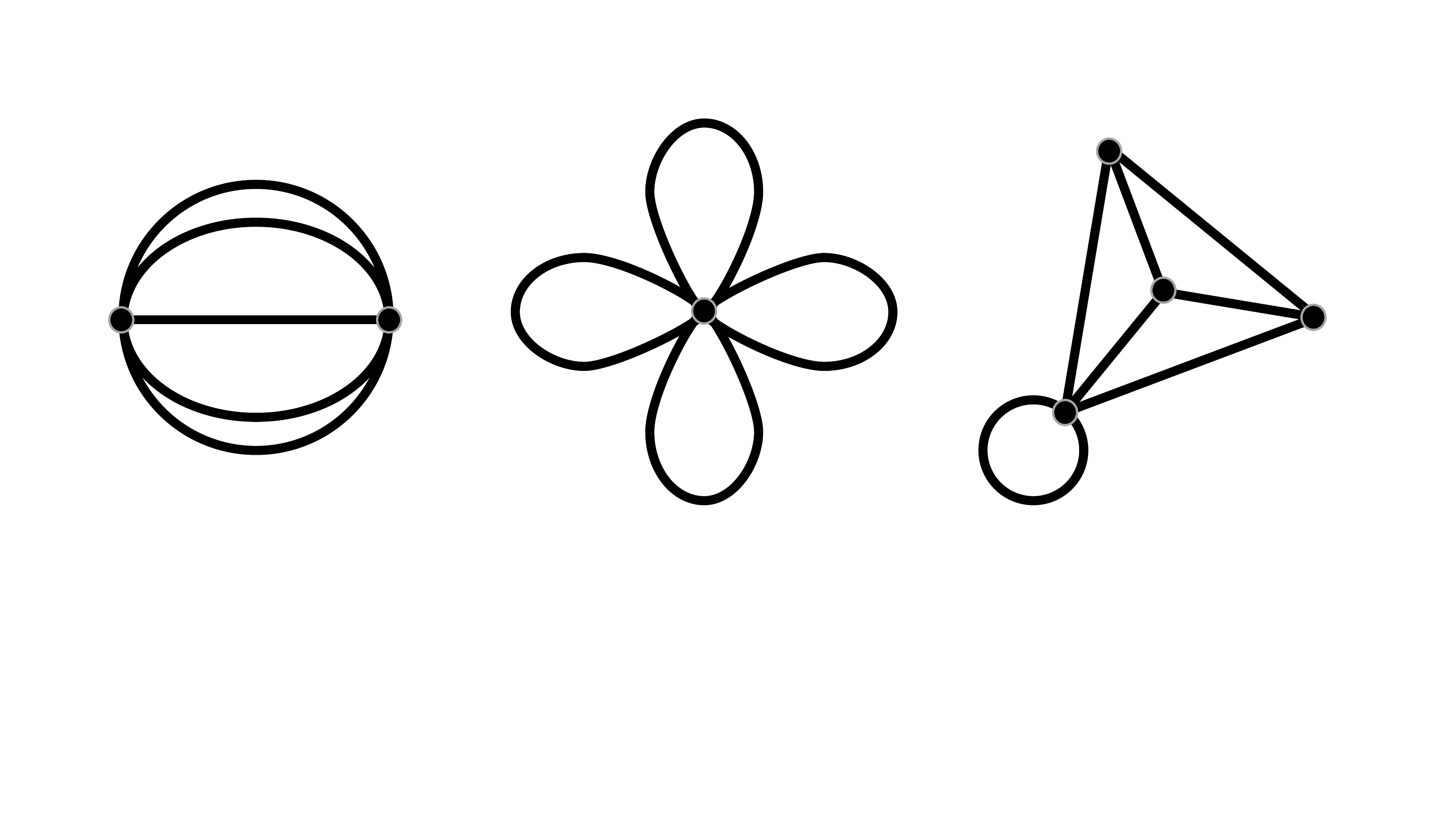}
	
	\caption{\label{fig: Graphs figure} Left: A Mandarin graph with 5 edges. Middle: A flower graph with 4 loops. Right: A graph with one loop.}
\end{figure}
\begin{thm}[Non-vanishing trace]\label{thm: non vanishing}
	If $ \Gamma $ is a graph satisfying Assumption \ref{ass: assumptions}, then the following properties of eigenpairs are both strongly and ergodically generic.
	\begin{enumerate}
		\item $ k^2 $ is a simple eigenvalue,  and
		\item Either $ \tr_{k}(f) $ is non-vanishing, or $ f $ is supported on a single loop (if such exists).
	\end{enumerate}
 In particular, if $ \Gamma $ has no loops, then generically $ \tr_{k}(f) $ is non-vanishing.
\end{thm}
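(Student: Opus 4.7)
The plan is to follow the two-step strategy described in the introduction. For each of properties (1) and (2) I build a closed subvariety $B\subset\Sigma(\Gamma)$ that contains every point $\exp(ik\lv)$ corresponding to an eigenpair violating the property, and then verify that $B$ has positive codimension in $\Sigma(\Gamma)$. Once this is in hand, the main genericity lemma (Lemma \ref{lem: main genericty lem}) yields both strong and ergodic genericity simultaneously, since the preimage of $B$ under $\lv\mapsto \exp(ik\lv)$ is precisely the bad set of lengths. The decisive input for the codimension step is the Kurasov--Sarnak irreducibility theorem (Section 5): it provides the dichotomy that any analytic $B\subset\Sigma(\Gamma)$ either fills an entire irreducible component or is a proper subvariety of every component it meets.

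For property (1) I would take $B_1:=\msing$, the singular locus of the secular manifold. The local parametrization of $\Sigma(\Gamma)$ constructed in Section 4 shows that at a regular point $\bz\in\mreg$ the trace-space fiber has complex dimension one, so the corresponding eigenspace is one-dimensional and the eigenvalue is simple. Consequently multiple eigenvalues can only occur when $\exp(ik\lv)\in\msing$. Positive codimension of $\msing$ is then immediate, since each irreducible component of $\Sigma(\Gamma)$ is a real-analytic hypersurface in $\T^N$ whose singular set is a proper analytic subvariety.

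For property (2) I work inside the trace space $\TS(\Gamma)$. For each coordinate $\alpha\in\{1,\ldots,4N\}$ which is not exempt (i.e., does not correspond to a derivative at a vertex of degree one), set
\[
\widetilde B_\alpha := \set{(\bz,\xv)\in\TS(\Gamma)}{x_\alpha=0},
\]
a closed subvariety cut out by a single linear equation. Because the fiber of the projection $\TS(\Gamma)\to\Sigma(\Gamma)$ above any $\bz\in\mreg$ is a single complex line, the image $B_\alpha\subset\Sigma(\Gamma)$ is a closed subanalytic set consisting precisely of those $\bz$ where the generator of that fiber has a zero at its $\alpha$-th entry. The candidate bad variety is $B:=B_1\cup\bigcup_\alpha B_\alpha$; any $\lv$ with $\exp(ik\lv)\notin B$ for every $k\in\spec(\Gamma,\lv)$ satisfies both (1) and the non-exempt part of (2).

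The main obstacle, and the only place where Assumption \ref{ass: assumptions} and the loop-supported exemption enter, is to show that no $B_\alpha$ fills an entire irreducible component of $\Sigma(\Gamma)$ outside the exempted loop components. By the Kurasov--Sarnak dichotomy it suffices, for each admissible $\alpha$ and each irreducible component $\Sigma_0\subset\Sigma(\Gamma)$, to exhibit a single point $(\bz_0,\xv_0)\in\TS(\Gamma)$ over $\Sigma_0$ with $(\xv_0)_\alpha\neq 0$. Whenever $e_j$ is a loop, eigenfunctions supported on $e_j$ alone contribute an irreducible component on which the trace vanishes at every coordinate outside the $e_j$-block; these are exactly the components that the theorem carves out via the single-loop exemption. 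For every other irreducible component, a direct construction in the spirit of \cite{Fri_ijm05,BerLiu_jmaa17,AloBanBer_cmp18} --- choosing a convenient $\lv$, solving the linear transfer equations along the edges, and using the no-degree-two and not-a-loop-graph hypotheses to rule out a forced vanishing at the chosen vertex --- produces the required nonzero trace entry. Once every $B_\alpha$ is shown to be proper in every non-exempt component, $B$ is a finite union of closed subanalytic sets of positive codimension in $\Sigma(\Gamma)$, and Lemma \ref{lem: main genericty lem} concludes both assertions of the theorem.
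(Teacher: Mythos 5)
Your architecture is the same as the paper's: capture failure of (1) by $\msing(\Gamma)$, capture failure of (2) by the coordinate hyperplanes $x_\alpha=0$ in the trace space, push down to $\Sigma(\Gamma)$ using the one-dimensionality of the fiber over $\mreg(\Gamma)$, exempt the loop components, invoke the Kurasov--Sarnak dichotomy to get positive codimension from a single witness point, and finish with Lemma \ref{lem: main genericty lem}. However, there are two genuine gaps, and they sit exactly where the theorem-specific work lies.

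First, you assert that the projection $B_\alpha$ of $\{x_\alpha=0\}$ to $\Sigma(\Gamma)$ is a ``closed subanalytic set,'' and you then apply a dichotomy that you state for ``any analytic $B$.'' Neither suffices: Lemma \ref{lem: main genericty lem} requires $B$ to be a subvariety of $\Sigma(\Gamma)$ (an intersection with a zero set of polynomials), and Lemma \ref{lem: zariski} --- the actual form of the dichotomy, which rests on $P_\Gamma$ being a toral polynomial --- applies only to algebraic subvarieties $Z(q)\cap\Sigma(\Gamma)$; it is false for arbitrary analytic subsets of an irreducible component. So you must realize the condition ``the $\alpha$-th entry of the fiber generator vanishes'' as the vanishing of explicit polynomials in $\bz$. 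The paper does this via the matrix $A(\bz)=M\,\adj(\mathbb{I}_{2N}-U(\bz))M^{*}$, whose entries are polynomials and which equals $c_{\bz,\xv}\,\xv\xv^{*}$ over regular points (Lemma \ref{lem: trace as Az}); the condition $x_\alpha=0$ then becomes $A(\bz)_{\alpha\alpha}=0$. Without some such device your $B_\alpha$ is not in the class to which either lemma applies.

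Second, the codimension step reduces to exhibiting, for each non-exempt coordinate $\alpha$ and each non-exempt irreducible component, one point of $\TS(\Gamma)$ with $(\xv)_\alpha\neq 0$, and here you only gesture at ``a direct construction in the spirit of'' earlier works. This existence statement is the actual content of the theorem (Lemma \ref{lem: nonvanishing lemma} in the paper): for Dirichlet entries it is imported from Berkolaiko--Liu, but for the Neumann entries --- the non-vanishing of derivatives at vertices of degree $\ge 2$, which is the new assertion of this theorem --- it requires a separate argument (the paper cites a lemma from \cite{Alon2020PHD}), and no hint of it appears in your sketch. Relatedly, for mandarin graphs the witness must land on each of the two irreducible components $Z(P_{M,\mathrm{s}})$ and $Z(P_{M,\mathrm{as}})$ separately; the paper transfers a witness from one symmetry type to the other by lengthening every edge by $\pi/k$, and your proposal does not address how a witness of the required symmetry type is obtained. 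Until these existence statements are actually supplied, the dichotomy has nothing to act on.
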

In Theorem \ref{thm: non vanishing} we distinguish graphs with loops from other graphs. For the next theorem we will also introduce two special types of graphs. A \emph{mandarin}\footnote{Mandarin graphs are sometimes referred to as pumpkin or watermelon graphs.} graph has only two vertices and each edge connects the two. A \emph{flower} graph has only one vertex, and every edge is a loop. See Figure \ref{fig: Graphs figure} for examples. 
\begin{thm}[No common spectrum]\label{thm: disjoint spectrum} Except for two cases, given any pair of non-isomorphic graphs $ \Gamma $ and $ \Gamma' $, both have $ N $ edges and satisfy Assumption \ref{ass: assumptions}:   
	\begin{enumerate}
		\item There is a strongly generic set $ G\subset\R_{+}^{N} $ such that for any $ \lv=\lv'\in G $,
		\[\spec(\Gamma,\lv)\cap\spec(\Gamma',\lv)=\{0\}.\]
		\item For any $ \Q $-independent $ \lv $, the joint spectrum has density zero in $ \spec(\Gamma,\lv) $,i.e.,
		\[\lim_{T\to\infty}\frac{|\spec(\Gamma,\lv)\cap\spec(\Gamma',\lv)\cap[0,T]|}{|\spec(\Gamma,\lv)\cap[0,T]|}=0.\]
	\end{enumerate}
The two exceptional cases are,
\begin{enumerate}
	\item[i)] If $ \Gamma $ and $ \Gamma' $ share a common loop, say $ e_{j} $, then for any choice of $ \lv=\lv'\in\R_{+}^{N} $,
	\[\set{k=\frac{2\pi}{\ell_{j}}n}{n\in\N}\subset\spec(\Gamma,\lv)\cap\spec(\Gamma',\lv),\]
	which means that the common spectrum has positive density,
		\[\liminf_{T\to\infty}\frac{|\spec(\Gamma,\lv)\cap\spec(\Gamma',\lv)\cap[0,T]|}{|\spec(\Gamma,\lv)\cap[0,T]|}\ge\frac{2L}{\ell_{j}},\qquad  L=\sum_{j=1}^{N}\ell_{j}.\]
	\item [ii)] If $ \Gamma $ is a mandarin graph and $ \Gamma' $ is a flower (or vice versa), then for any choice of $ \lv=\lv'\in\R_{+}^{N} $, the two graphs share at least half of their spectrum, i.e.,
		\[\liminf_{T\to\infty}\frac{|\spec(\Gamma,\lv)\cap\spec(\Gamma',\lv)\cap[0,T]|}{|\spec(\Gamma,\lv)\cap[0,T]|}\ge\frac{1}{2}.\]
\end{enumerate}
\end{thm}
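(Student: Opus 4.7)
The plan is to translate the shared-spectrum question into an incidence statement with a subvariety of the secular manifold, and then invoke Lemma \ref{lem: main genericty lem}. For every $k>0$,
\[ k\in\spec(\Gamma,\lv)\cap\spec(\Gamma',\lv)\iff\exp(ik\lv)\in B,\qquad B:=\Sigma(\Gamma)\cap\Sigma(\Gamma'), \]
and since $\Sigma(\Gamma')$ is cut out in $\T^{N}$ by its own secular polynomial, $B$ is a subvariety of $\Sigma(\Gamma)$ in the sense used in the paper. If I can show that, outside the two listed exceptional families, $B$ has positive codimension in $\Sigma(\Gamma)$, then Lemma \ref{lem: main genericty lem} yields both assertions: the strong genericity in~(1) and the density-zero statement in~(2). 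In the exceptional cases $B$ will instead contain a full irreducible component of $\Sigma(\Gamma)$, producing a shared spectrum of positive density.

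\textbf{From codimension to shared components.} Because $\Sigma(\Gamma)$ is pure of codimension one in $\T^{N}$, the condition that $B$ has positive codimension in $\Sigma(\Gamma)$ is equivalent to $\Sigma(\Gamma)\not\subset\Sigma(\Gamma')$, and, comparing codimension-one irreducible decompositions, this is equivalent to $\Sigma(\Gamma)$ and $\Sigma(\Gamma')$ sharing no common irreducible component. Here I would apply the Kurasov--Sarnak decomposition recalled in Section 5: each loop $e_{j}\subset\Gamma$ contributes a ``loop component'' of $\Sigma(\Gamma)$ inside $\{z_{j}=1\}$, while the remaining ``primitive'' components are controlled by the bridgeless structure of $\Gamma$. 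A common loop $e_{j}$ therefore automatically forces $\{z_{j}=1\}\cap\Sigma(\Gamma)$ to be a shared component, accounting for case~(i). Discharging the non-exceptional case then reduces to the combinatorial claim that, if $\Gamma$ and $\Gamma'$ are non-isomorphic, share no loop, and $\{\Gamma,\Gamma'\}$ is not a mandarin--flower pair, then no primitive component of $\Sigma(\Gamma)$ equals a component of $\Sigma(\Gamma')$.

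\textbf{Exceptional cases.} Case~(i) is verified directly: on any loop $e_{j}$ of length $\ell_{j}$, the single-edge function $\sin(k t_{j})$ with $k=2\pi n/\ell_{j}$, $n\in\N$, is a loop-supported eigenfunction of every graph containing $e_{j}$ as a loop, which gives the arithmetic progression in the common spectrum; the asymptotic density then follows from Weyl's law for $\spec(\Gamma,\lv)$. Case~(ii) I would handle by an explicit map from half of the mandarin spectrum into the flower spectrum: if $M$ has vertices $u,v$ and $F$ is obtained from $M$ by identifying $u$ with $v$, then any eigenfunction $f$ on $(M,\lv)$ that is symmetric under the edge-reflection $t_{j}\mapsto\ell_{j}-t_{j}$ satisfies $f(u)=f(v)$, and the standard conditions on $M$ at $u$ and at $v$ sum to the single standard condition on $F$ at the identified vertex, so such $f$ descend to eigenfunctions on $(F,\lv)$ with the same eigenvalue. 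The symmetric/antisymmetric decomposition of the $M$-eigenspaces carries asymptotic density at least $1/2$ onto the symmetric part, establishing the claimed lower bound (with the reverse inclusion, when $\Gamma$ is the flower, proved by the same quotient map).

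\textbf{Main obstacle.} The routine ingredients are Lemma \ref{lem: main genericty lem} and the explicit calculations in paragraph three. The technical heart is the graph-theoretic rigidity claim in paragraph two: that the mandarin--flower pairing is the \emph{only} non-isomorphic coincidence of primitive components. I expect to derive it from the Kurasov--Sarnak classification by recovering the bridgeless skeleton of $\Gamma$ from the combinatorics of its primitive component --- for instance by reading off vertex degrees and edge adjacencies from the singular strata of $\Sigma(\Gamma)$ --- and identifying the mandarin--flower coincidence as the unique quotient operation that preserves the ``symmetric half'' of the primitive secular hypersurface.
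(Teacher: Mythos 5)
Your overall architecture matches the paper's: you set $B=\Sigma(\Gamma)\cap\Sigma(\Gamma')$, reduce statements (1) and (2) to ``$B$ has positive codimension in $\Sigma(\Gamma)$'' via Lemma \ref{lem: main genericty lem}, translate that into ``$P_{\Gamma}$ and $P_{\Gamma'}$ share no irreducible factor,'' and handle the exceptional cases by explicit computation. The problem is that you have not actually proved the central claim; you explicitly defer it (``I expect to derive it\dots by reading off vertex degrees and edge adjacencies from the singular strata''), and the route you sketch --- reconstructing the graph from the singular stratification of its primitive component --- is both speculative and much harder than what is needed. The paper's Lemma \ref{lem: no common factors for different graphs} disposes of this in a few lines by degree counting: $P_{\Gamma}$ has degree exactly $2$ in every $z_{j}$, with the factor $(1-z_{j})$ absorbing one degree for each loop $e_{j}$, so an irreducible common factor of two reducible secular polynomials forces matching loop sets (case i) or forces a degree-one-in-every-variable factor, which pins down the mandarin--flower pair (case ii). You also silently use that non-isomorphic graphs have $P_{\Gamma}\ne P_{\Gamma'}$; this is not free --- the paper derives it from the inverse spectral results of Gutkin--Smilansky and Kurasov--Nowaczyk together with the toral property of $P_{\Gamma}$ (Lemma \ref{lem: zariski}). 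Without some such input, a common \emph{full} coincidence $\Sigma(\Gamma)=\Sigma(\Gamma')$ for non-isomorphic graphs is not excluded by anything you wrote.

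Two smaller points. First, your equivalence ``positive codimension in $\Sigma(\Gamma)$ iff no common irreducible component'' needs the toral property: a priori two hypersurfaces with no common component could still meet $\T^{N}$ in a set of real dimension $N-1$; the paper rules this out via Lemma \ref{lem: zariski} and Lemma \ref{lem: real dimension bounded by complex dimension}. Second, your treatment of case (ii) by the quotient map identifying the two mandarin vertices is a genuinely different and rather elegant alternative to the paper's argument (which instead proves $P_{\Gammasym}=cP_{M,\mathrm{s}}$ and counts zeros of the symmetric factor via Weyl's law on the flower side); but your assertion that the symmetric part of the mandarin spectrum has density at least $1/2$ is itself an unproved counting statement --- the paper obtains the analogous count by subtracting the explicit loop eigenvalues $\tfrac{2\pi}{\ell_{j}}\N$ from the flower spectrum, and you would need a comparable computation on the mandarin side.
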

\begin{rem} We mention that the spectra of graphs with equal edge lengths share the same linear growth rate due to Weyl's law
	\[|\spec(\Gamma,\lv)\cap[0,T]|\asymp|\spec(\Gamma',\lv)\cap[0,T]|=\frac{\pi}{L}T+O(1),\qquad T\to\infty\]
	where $ L:=\sum_{j=1}^{N}\ell_{j} $; see \cite[p. 95]{BerKuc_graphs}. 
\end{rem}
The above two theorems descend from a dichotomy of scale-invariant vertex conditions, i.e., linear equations in $ \tr_{k}(f) $. Heuristically, up to some technicalities, a scale-invariant vertex condition either is always satisfied or it is (strongly and ergodically) generically never satisfied. This dichotomy can be generalized to any $ q(\tr_{k}(f))=0 $ condition, with $ q $ homogeneous polynomial. In fact, it can be further generalized to polynomial conditions on the trace space, $ q(\exp(ik\lv),\tr_{k}(f))=0 $ where $ q $ is a polynomial that is homogeneous in the $ \tr_{k}(f) $ variables. The homogeneity makes these conditions independent of the eigenfunction's normalization. The next two theorems rigorously state this dichotomy. 
 

\begin{thm}[Trace space genericity]\label{thm: polynomial vertex conditions} Let $ \Gamma $ be a graph satisfying Assumption \ref{ass: assumptions} that is not a mandarin. Let $ q $ be a polynomial on $ \C^{N}\times\C^{4N} $ that is homogeneous in the $ \C^{4N} $ coordinates. If there exist an $ \lv\in\R_{+}^{N} $ and an eigenpair $ (k^2,f) $ of $ (\Gamma,\lv) $ such that
	\begin{enumerate}
		\item[i)] $ k^2 $ is a non-zero simple eigenvalue.
		\item[ii)]  $ f $ is not supported on a loop, and $ q(\exp{(ik\lv)},\tr_{k}(f))\ne0 $. 
	\end{enumerate}
Then, the following properties of eigenpairs are both strongly and ergodically generic:
\begin{enumerate}
	\item $ k^2 $ is a simple eigenvalue, and
	\item $ q(\exp{(ik\lv)},\tr_{k}(f))\ne0 $ when $ f $ is not supported on a single loop.
\end{enumerate} 	
\end{thm}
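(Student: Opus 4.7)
The strategy is the one outlined in the introduction: package the negation of the desired properties into a ``bad'' subvariety of $\Sigma(\Gamma)$, apply the Kurasov--Sarnak irreducibility theorem to show this subvariety has positive codimension, and then invoke the main genericity Lemma~\ref{lem: main genericty lem}. Concretely, I would define
\[
B \;:=\; \msing \;\cup\; B_q,
\]
where $\msing$ is the singular locus of $\Sigma(\Gamma)$ (capturing multiple eigenvalues) and $B_q \subset \mreg$ is the set of regular points $\bz \in \mreg$ for which the (one-dimensional, up to scale) fiber vector $\xv(\bz)$ of $\TS(\Gamma)$ over $\bz$ satisfies $q(\bz, \xv(\bz)) = 0$. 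Because $q$ is homogeneous in the $\xv$ coordinates, the vanishing of $q$ is well-defined on the one-dimensional fiber, and the analytic dependence of $\xv(\bz)$ on $\bz$ (inherited from the structure of $\TS(\Gamma)$ as a subvariety of $\Sigma(\Gamma) \times \C^{4N}$ over $\mreg$), together with the polynomial form of $q$, makes $B_q$ a relatively closed subvariety of $\mreg$.

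The key step is to show that $B$ has positive codimension in $\Sigma(\Gamma)$. Here I would invoke the Kurasov--Sarnak description of the irreducible components of $\Sigma(\Gamma)$: under the standing assumptions together with the non-mandarin hypothesis, the irreducible components of $\Sigma(\Gamma)$ split into the ``loop'' components, which are contained in $\mL$, and a single ``main'' irreducible component $\Sigma_{0}(\Gamma)$. The loop components correspond exactly to eigenfunctions supported on a single loop, which the conclusion of the theorem explicitly exempts, so it suffices to show $B \cap \Sigma_0(\Gamma)$ has positive codimension inside $\Sigma_0(\Gamma)$. But hypothesis (i)--(ii) furnishes a point $\bz_0 = \exp(ik\lv) \in \mreg \cap \Sigma_0(\Gamma)$ with $q(\bz_0, \xv(\bz_0)) \ne 0$, exhibiting $B \cap \Sigma_0(\Gamma)$ as a proper closed subvariety of the irreducible set $\Sigma_0(\Gamma)$; hence it has positive codimension.

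Applying Lemma~\ref{lem: main genericty lem} to $B$ then gives that $\exp(ik\lv) \notin B$ is both strongly and ergodically generic among eigenpairs of $(\Gamma,\lv)$. Whenever this holds, either $\exp(ik\lv) \in \mL$, in which case $f$ is supported on a single loop and the conclusion is vacuous, or $\bz \in \mreg \setminus B_q$, in which case $k^{2}$ is simple and $q(\bz, \xv(\bz)) \ne 0$. The main obstacle I expect is the clean separation of the irreducible components of $\Sigma(\Gamma)$: one must verify that, after stripping off the components contained in $\mL$, what remains is genuinely one irreducible piece (so that the hypothesis (i)--(ii) produces a point on \emph{that} piece, not on a loop component), and that a point with a non-loop-supported eigenfunction indeed lies on $\Sigma_0(\Gamma)$. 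This is precisely where the non-mandarin hypothesis intervenes, since mandarin graphs are exactly the case in which Kurasov--Sarnak permits an additional splitting of the main component, which would block the codimension argument.
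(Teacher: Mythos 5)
Your high-level strategy is the paper's: encode the negation of the desired properties as a ``bad'' set $B=\msing\cup B_q$, use irreducibility plus the witness point from hypotheses (i)--(ii) to force positive codimension, and finish with Lemma~\ref{lem: main genericty lem}. But the two load-bearing steps are asserted rather than proved, and both require real work.

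First, you claim that ``the analytic dependence of $\xv(\bz)$ on $\bz$ \ldots together with the polynomial form of $q$, makes $B_q$ a relatively closed subvariety of $\mreg$.'' This is exactly the nontrivial point: Lemma~\ref{lem: main genericty lem} and the irreducibility dichotomy both require $B$ to be cut out by \emph{polynomials in $\bz$}, not merely to be an analytic set, and the fiber vector $\xv(\bz)$ is only defined up to scale and has no obvious polynomial parametrization. The paper resolves this with the adjugate construction $A(\bz)=M\,\adj(\mathbb{I}_{2N}-U(\bz))M^{*}$ (Lemma~\ref{lem: trace as Az}), whose entries are polynomials in $\bz$ and which equals $c_{\bz,\xv}\,\xv\xv^{*}$ on $\mreg$; for $q$ homogeneous of degree $m$ one writes $\overline{x}_{j}^{\,m}q(\bz,\xv)$ as a polynomial in the entries of $\xv\xv^{*}$ and substitutes $A(\bz)$ to obtain $4N$ genuine polynomials $Q_{j}(\bz)$ with $q(\bz,\xv)=0\iff Q_{j}(\bz)=0$ for all $j$ (on $\mreg$, $\xv\neq0$). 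Without this (or an equivalent device) your $B_q$ is not known to be a subvariety and the rest of the argument does not get off the ground. Second, your codimension step --- ``a proper closed subvariety of the irreducible set $\Sigma_{0}(\Gamma)$ has positive codimension'' --- silently transfers a complex-algebraic dichotomy to the \emph{real} torus $\T^{N}$. A polynomial $Q_{j}$ could a priori vanish on all of $Z(p)\cap\T^{N}$ without $p$ dividing $Q_{j}$; the paper rules this out by showing $P_{\Gamma}$ (hence each irreducible factor $p$) is a \emph{toral} polynomial, so $Z(p)\cap\T^{N}$ is Zariski dense in $Z(p)$ (Lemma~\ref{lem: zariski}). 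Your witness point then shows $p\nmid Q_{j}$ for some $j$, and only then does $\dim\bigl(Z(Q_{j})\cap Z(p)\cap\T^{N}\bigr)\le N-2$ follow. The remaining ingredients you list (the identification of the non-loop eigenfunctions with the factor $P_{\Gammasym}$, via Lemma~\ref{lem: trace space for loops}, and $\dim\msing\le N-2$) are correctly anticipated, but you flag them as obstacles rather than supplying them.
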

Mandarin graphs are excluded in Theorem \ref{thm: polynomial vertex conditions} due to a certain reflection symmetry. A mandarin graph is symmetric to the reflection of all edges simultaneously, resulting in the swapping of the two vertices \cite{BanBerWey_jmp15}. A function is called \emph{symmetric} if it is invariant under this reflection, and \emph{anti-symmetric} if $ f\mapsto -f $. The orthonormal set of eigenfunctions can be chosen such that all eigenfunctions are either symmetric or anti-symmetric.
\begin{thm}[Trace space genericity for mandarins]\label{thm: polynomial vertex conditions mandarin} Let $ \Gamma $ be a mandarin graph of $ N\ge 3 $ edges. Let $ q $ be any polynomial on $ \C^{N}\times\C^{4N} $ that is homogeneous in the $ \C^{4N} $ coordinates. If there exist an $ \lv\in\R_{+}^{N} $ and an eigenpair $ (k^2,f) $ of $ (\Gamma,\lv) $ such that
	\begin{enumerate}
		\item[i)] $ k^2 $ is a non-zero simple eigenvalue.
		\item[ii)]  $ f $ is symmetric (resp. anti-symmetric), and $ q(\exp{(ik\lv)},\tr_{k}(f))\ne0 $. 
	\end{enumerate}
	Then, the following properties of eigenpairs are both strongly and ergodically generic:
	\begin{enumerate}
		\item $ k^2 $ is a simple eigenvalue, and
		\item $ q(\exp{(ik\lv)},\tr_{k}(f))\ne0 $ when $ f $ is symmetric (resp. anti-symmetric).
	\end{enumerate} 
\end{thm}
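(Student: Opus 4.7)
The plan is to adapt the strategy used for Theorem~\ref{thm: polynomial vertex conditions}, accounting for the fact that a mandarin graph's secular manifold is reducible rather than irreducible. By the Kurasov--Sarnak analysis recalled in Section~5, for a mandarin $\Gamma$ with $N \ge 3$ edges the edge-reflection symmetry forces $\Sigma(\Gamma)$ to split into exactly two irreducible components $\Sigma_{\text{sym}}$ and $\Sigma_{\text{a-sym}}$, corresponding respectively to the closures of the loci of $\bz = \exp(ik\lv)$ whose associated one-dimensional eigenspace is spanned by a symmetric or anti-symmetric eigenfunction. The trace space $\TS(\Gamma)$ inherits a parallel decomposition $\TS_{\text{sym}} \cup \TS_{\text{a-sym}}$, and over each regular point of either component the fiber is a complex line spanned by a real vector, depending analytically on the base point (as established in Section~4 for $\TS(\Gamma)$ and inherited here).

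Assume without loss of generality that the eigenfunction $f$ in the hypothesis is symmetric, and set $\Sigma_\star := \Sigma_{\text{sym}}$, $\TS_\star := \TS_{\text{sym}}$; the anti-symmetric case is identical. Since $q$ is homogeneous in the $\C^{4N}$-coordinates, the equation $q(\bz,\xv)=0$ is invariant under rescaling the trace, so it defines a subvariety $\widetilde{B} \subset \TS_\star$. Let $B \subset \Sigma_\star$ be the closure of its projection onto the $\bz$-factor. The hypothesis supplies a point $\bz_0 = \exp(ik\lv)$ whose trace fiber contains $\tr_k(f)$ with $q(\bz_0, \tr_k(f)) \neq 0$; moreover, since $k^2$ is simple, $\bz_0$ lies in the regular locus of $\Sigma(\Gamma)$, hence in $\Sigma_\star^{\mathrm{reg}}$ and $\bz_0 \notin B$. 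Because $\Sigma_\star$ is irreducible and $B$ is a proper analytic subvariety cut out from an algebraic condition on $\TS_\star$, $B$ must have positive codimension in $\Sigma_\star$, and therefore in $\Sigma(\Gamma)$.

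Now set $B' := B \cup \msing$, where $\msing$ is the singular locus of $\Sigma(\Gamma)$; by Section~5 this is again a subvariety of positive codimension and it captures multiplicity failures of the eigenvalue. Any eigenpair $(k^2,f)$ with $\exp(ik\lv) \notin B'$ then has $k^2$ simple and (when $f$ is symmetric) satisfies $q(\exp(ik\lv),\tr_k(f)) \neq 0$. Applying Lemma~\ref{lem: main genericty lem} to $B'$ yields a strongly generic set $G \subset \R_+^N$ on which every eigenpair avoids $B'$, proving (1) and (2). Ergodic genericity follows from the same lemma together with the equidistribution of $\{\exp(ik\lv)\}_{k \in \spec(\Gamma,\lv)}$ on $\Sigma(\Gamma)$ for $\Q$-independent $\lv$: symmetric and anti-symmetric spectra equidistribute on $\Sigma_{\text{sym}}$ and $\Sigma_{\text{a-sym}}$ separately, and the positive-codimension set $B'$ has measure zero for the limiting measure, so the density of bad eigenvalues tends to zero.

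The main obstacle is the first step: confirming that $\Sigma_{\text{sym}}$ and $\Sigma_{\text{a-sym}}$ are each irreducible, not merely that together they exhaust $\Sigma(\Gamma)$. This is precisely where the Kurasov--Sarnak machinery must be applied \emph{componentwise} on the mandarin, and it is also why the statement excludes $N=2$ (where degree-two vertices and additional symmetries collapse the picture). A secondary technical point is to check that the fiber representative can be chosen to make $q(\bz, \xv(\bz))$ a well-defined meromorphic function on $\Sigma_\star$ whose vanishing locus coincides with $B$; homogeneity of $q$ in the trace coordinates is precisely what makes this independent of the local choice of normalization.
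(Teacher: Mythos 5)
Your proposal follows the same broad outline as the paper: split $\Sigma(\Gamma)$ into the two irreducible components coming from the factorization $P_\Gamma = P_{M,\mathrm{s}}\,P_{M,\mathrm{as}}$, identify the ``bad'' set $B$ where $q$ vanishes on the relevant component, argue $B$ has positive codimension, and feed $B\cup\msing(\Gamma)$ into Lemma~\ref{lem: main genericty lem}. However, the two steps you flag as ``the main obstacle'' and ``a secondary technical point'' are exactly where the substantive work lies, and as written they are gaps, not afterthoughts.

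First, for Lemma~\ref{lem: main genericty lem} to apply, $B$ must be a \emph{subvariety} of $\Sigma(\Gamma)$, i.e.\ cut out by polynomials in $\bz$ alone. Your ``closure of the projection of $\{q=0\}\subset\TS_\star$'' does not obviously produce such a set: since $q$ is homogeneous of positive degree in $\xv$, the naive projection is all of $\Sigma_\star$ (because $\xv=0$ always solves $q=0$), and restricting to nonzero $\xv$ makes it a fiberwise condition that has no immediate polynomial description in $\bz$. The paper resolves this via Lemma~\ref{lem: trace as Az}: the polynomial matrix $A(\bz)=M\adj(\mathbb{I}_{2N}-U(\bz))M^*$ is proportional to $\xv\xv^*$ on the regular locus, and substituting entries of $A(\bz)$ for products $x_i\bar x_j$ in $\bar x_j^{\,m}q(\bz,\xv)$ produces $4N$ explicit polynomials $Q_j(\bz)$ with $q(\bz,\xv)=0\iff Q_j(\bz)=0$ for all $j$ (whenever $\bz\in\mreg$, $\xv\neq 0$). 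This is the step that turns the trace-space condition into an honest subvariety of $\T^N$.

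Second, ``$B$ is a proper analytic subvariety of the irreducible $\Sigma_\star$, hence positive codimension'' is not a valid deduction as stated, because $\Sigma_\star = Z(p)\cap\T^N$ is a \emph{real} subset of the torus, not the complex variety $Z(p)$. A polynomial could in principle vanish on all of $Z(p)\cap\T^N$ without $p$ dividing it, or fail to vanish at one point yet cut out a set of full dimension, unless one knows that $Z(p)\cap\T^N$ is Zariski dense in $Z(p)$. The paper proves exactly this (Lemma~\ref{lem: zariski}) by showing $P_\Gamma$ — hence every irreducible factor, including $P_{M,\mathrm{s}}$ and $P_{M,\mathrm{as}}$ — is a \emph{toral} polynomial in the sense of Agler et al., using that $\diag(\bz,\bz)S$ is strictly contracting inside the polydisc and strictly expanding outside. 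Only with this dichotomy (either $p\mid Q_j$ or $\dim(Z(Q_j)\cap Z(p)\cap\T^N)\le N-2$) does the existence of a single witness point $\bz_0\notin B$ yield the positive-codimension conclusion. You should invoke Lemma~\ref{lem: zariski} explicitly; irreducibility of the component alone does not suffice.
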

\begin{rem}
	Notice that Theorem \ref{thm: polynomial vertex conditions} distinguishes between symmetric and anti-symmetric eigenfunctions for mandarin graphs, and Theorem \ref{thm: polynomial vertex conditions} distinguishes between eigenfunctions that are supported on a loop and the rest of the eigenfunctions whenever a graph has loops. Both cases reflect a certain algebraic property of $ \Sigma(\Gamma) $; it is reducible for mandarin graphs and for graphs with loops. We elaborate on that in Section \ref{sec: irreduicible}. 
\end{rem}

	\section{The trace space and the secular manifold}\label{sec: Tspace and secular manifold}
	In this section we construct the trace space $ \TS(\Gamma) $ and the secular manifold $ \Sigma(\Gamma) $ as zero sets of polynomials, establishing their algebraic structure. We use the notation $ (\bz,\xv) $ for points in $ \T^{N}\times\C^{4N}\subset\C^{N}\times\C^{4N} $ so that $ (\bz,\xv)\in \TS(\Gamma) $ implies there exists $ \lv\in\R_{+}^{N} $ and some $ (k^2,f) $ eigenpair of $ (\Gamma,\lv) $, such that
	\[\bz=\exp(ik\lv),\quad\mbox{and}\quad\xv=\tr_{k}(f).\] 
	\begin{rem}
		We may assume that $ k\ne0 $. To see that, recall that $ k=0 $ is a simple eigenvalue for any (connected) $ (\Gamma,\lv) $ and its eigenfunction is constant $ f\equiv A $. The corresponding point $ (\bz,\xv) $ in such case has $ \bz=(1,1,\ldots,1) $ and $ \xv $ with all Dirichlet entries equal to $ A $ and zero Neumann entries. We get the same point $ (\bz,\xv) $ if we choose $ \lv=(2\pi,2\pi,\ldots,2\pi) $ with $ k=1 $ and eigenfunction $ f $ whose restrictions are 
		\[f|_{e_{j}}(t_{j})=A\cos(t_{j}),\qquad j=1,2,\ldots,N.\]
	\end{rem}
	The standard vertex conditions (see Definition \ref{def: standard vertex conditions}) can be written as a linear equation
	\[P_{\mathrm{std}}\tr_{k}(f)=0,\]
	where $ P_{\mathrm{std}} $ is a $ 2N\times4N $ matrix of rank $ 2N $. Recall the notation $ (A_{j}, B_{j}, C_{j}, D_{j}) $ for the restriction of $ \xv= \tr_{k}(f)$ to the edge $ e_{j} $, such that 
	\begin{equation}\label{eq: amplitudes}
		f|_{e_{j}}(t_{j})= A_{j}\cos(kt_{j})+B_{j}\sin(kt_{j})=C_{j}\cos(k(\ell_{j}-t_{j}))+B_{j}\sin(k(\ell_{j}-t_{j}))
	\end{equation}
\begin{lem}\label{lem: trace space polynomial equation}
	Let $ \Gamma $ be a graph with $ N $ edges. Then its trace space $ \TS(\Gamma) $ is equal to the set of $ (\bz,\xv)\in\T^{N}\times\C^{4N} $ which satisfies the following multi-linear\footnote{A multi-linear function is a multi-variable polynomial of degree one in each variable.} equations:
	\begin{enumerate}
		\item Vertex conditions: $\quad P_{\mathrm{std}}\xv =0 $.
		\item Edge conditions:  
			\begin{align}\label{eq: edge equation 1}
			A_{j}+iB_{j}- z_{j}(C_{j}-iD_{j}) & =0,\\
			C_{j}+iD_{j}- z_{j}(A_{j}-iB_{j}) & =0 \label{eq: edge equation 2},
		\end{align}
	for every edge $ e_{j} $.
	\end{enumerate}
In particular, $ \TS(\Gamma) $ is an algebraic subvariety of $ \T^{N}\times\C^{4N} $. 
Furthermore, the $ \xv $ fiber above a base point $ \bz\in\Sigma(\Gamma) $, i.e.,
\[ \TS_{\bz}(\Gamma):=\set{\xv\in\C^{4N}}{(\bz,\xv)\in\TS(\Gamma)}, \]
is a subspace of $ \C^{N} $ that has a basis of real vectors.
\end{lem}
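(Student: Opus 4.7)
The plan is to prove the lemma by two containments followed by the structural statements about fibers.

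For the forward containment, I would start from an eigenpair $(k^2,f)$ with $k>0$ on $(\Gamma,\lv)$ and show its trace satisfies the listed equations. The vertex equations $P_{\mathrm{std}}\xv=0$ are nothing but the standard vertex conditions rewritten using the scale-invariant trace, since those conditions are homogeneously linear in the Dirichlet and Neumann components and scale-invariant by the remark preceding the lemma. The edge equations are obtained by expanding $\cos(k(\ell_j-t_j))$ and $\sin(k(\ell_j-t_j))$ in~\eqref{eq: amplitudes} via the angle subtraction formulas and matching coefficients of $\cos(kt_j)$ and $\sin(kt_j)$, yielding $A_j=C_j\cos(k\ell_j)+D_j\sin(k\ell_j)$ and $B_j=C_j\sin(k\ell_j)-D_j\cos(k\ell_j)$. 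Forming the combinations $A_j\pm iB_j$ and substituting $z_j=e^{ik\ell_j}$ gives~\eqref{eq: edge equation 1} and~\eqref{eq: edge equation 2} directly.

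For the reverse containment, suppose $(\bz,\xv)\in\T^N\times\C^{4N}$ satisfies the vertex and edge equations. I would realize $\bz=\exp(ik\lv)$ by a convenient choice, for instance $k=1$ and $\ell_j\in(0,2\pi]$ with $e^{i\ell_j}=z_j$, and then define $f$ edgewise by $f|_{e_j}(t_j)=A_j\cos(t_j)+B_j\sin(t_j)$. By construction $-f''=k^2 f$ on each edge; the edge equations, read backwards through the same algebraic manipulation, force the identity $f|_{e_j}(t_j)=C_j\cos(\ell_j-t_j)+D_j\sin(\ell_j-t_j)$, so the $(C_j,D_j)$ entries of $\xv$ are indeed the amplitudes from the opposite endpoint. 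The Dirichlet and Neumann traces of $f$ are then recovered from $\xv$ (with the Neumann trace rescaled by $k=1$), and the relation $P_{\mathrm{std}}\xv=0$ yields $f\in\mathcal{D}_{\mathrm{standard}}(\Gamma,\lv)$. Thus $(k^2,f)$ is an eigenpair and $(\bz,\xv)\in\TS(\Gamma)$. Since both sets of defining equations are multi-linear in $(\bz,\xv)$ this identifies $\TS(\Gamma)$ as an algebraic subvariety.

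Finally, for the fiber statement, fixing $\bz\in\Sigma(\Gamma)$ the vertex and edge equations are linear in $\xv$, so $\TS_{\bz}(\Gamma)$ is a complex subspace of $\C^{4N}$. For the existence of a real basis I would not try to argue from the coefficient matrix (which involves the complex numbers $z_j$); instead I would use the realization step above together with the fact that the Laplacian with standard vertex conditions is a real operator. Concretely, realize $\bz=\exp(ik\lv)$ as before; then the trace map $\tr_k:\eig(\Gamma,\lv,k)\to\TS_{\bz}(\Gamma)$ is a linear bijection (injectivity and the explicit inverse are recorded in Remark~\ref{rem: bijection}, and surjectivity onto $\TS_{\bz}(\Gamma)$ follows from the realization construction applied to every $\xv$ in the fiber). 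Because $\Delta$ is real, $\eig(\Gamma,\lv,k)$ admits a basis of real-valued eigenfunctions, whose traces are real vectors, and these transport under $\tr_k$ to a real basis of $\TS_{\bz}(\Gamma)$.

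The only step that requires care is the last one, and the potential pitfall is circular reasoning: one must first obtain $\TS_{\bz}(\Gamma)$ as a genuine linear subspace (via the polynomial equations) so that the trace map is known to hit the entire fiber, and only then appeal to the reality of $\Delta$ to extract the real basis. Once the two containments are in hand, this synthesis is routine.
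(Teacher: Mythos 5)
Your proof of the main equivalence is correct and follows essentially the same route as the paper: realize $\bz=\exp(ik\lv)$, match the two sinusoidal forms of $f|_{e_j}$ to get a real $2\times 2$ relation between $(A_j,B_j)$ and $(C_j,D_j)$, and then complexify to obtain the two displayed edge equations; the vertex equations are just $P_{\mathrm{std}}\xv=0$.

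Where you genuinely diverge is in the ``real basis'' claim for $\TS_{\bz}(\Gamma)$. You deliberately avoid arguing from the coefficient matrix on the grounds that the edge equations involve the complex $z_j$, and instead route through the isomorphism $\tr_k:\eig(\Gamma,\lv,k)\to\TS_{\bz}(\Gamma)$ and the reality of $\Delta$. That argument is sound, and your worry about circularity is overcautious: $\TS_{\bz}(\Gamma)=\tr_k(\eig(\Gamma,\lv,k))$ is immediate from the definition of $\TS(\Gamma)$ for any realization, without first establishing that the fiber is cut out by linear equations. However, you missed a shorter observation, which is what the paper uses: the edge equations \eqref{eq: edge equation 1}--\eqref{eq: edge equation 2} are, over $\T^N$, equivalent to the purely real system
\[
\begin{pmatrix} A_{j}\\ B_{j}\end{pmatrix}
=\begin{pmatrix}\cos(k\ell_{j}) & \sin(k\ell_{j})\\ \sin(k\ell_{j}) & -\cos(k\ell_{j})\end{pmatrix}
\begin{pmatrix} C_{j}\\ D_{j}\end{pmatrix},
\]
and $P_{\mathrm{std}}$ is a real matrix, so once $\bz\in\T^N$ is fixed the fiber is the kernel of a real-coefficient linear map and therefore has a real basis with no appeal to the spectral theory. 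Your route buys nothing extra here and imports more machinery (reality of the operator, existence of a real eigenbasis, the trace-map bijection); the paper's one-line observation is the cleaner argument. Both are correct.
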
	
See Figure \ref{fig: parameters on graph} for example of the trace space coordinates assigned to a graph. 
 \begin{figure}
 	\includegraphics[width=0.75\paperwidth]{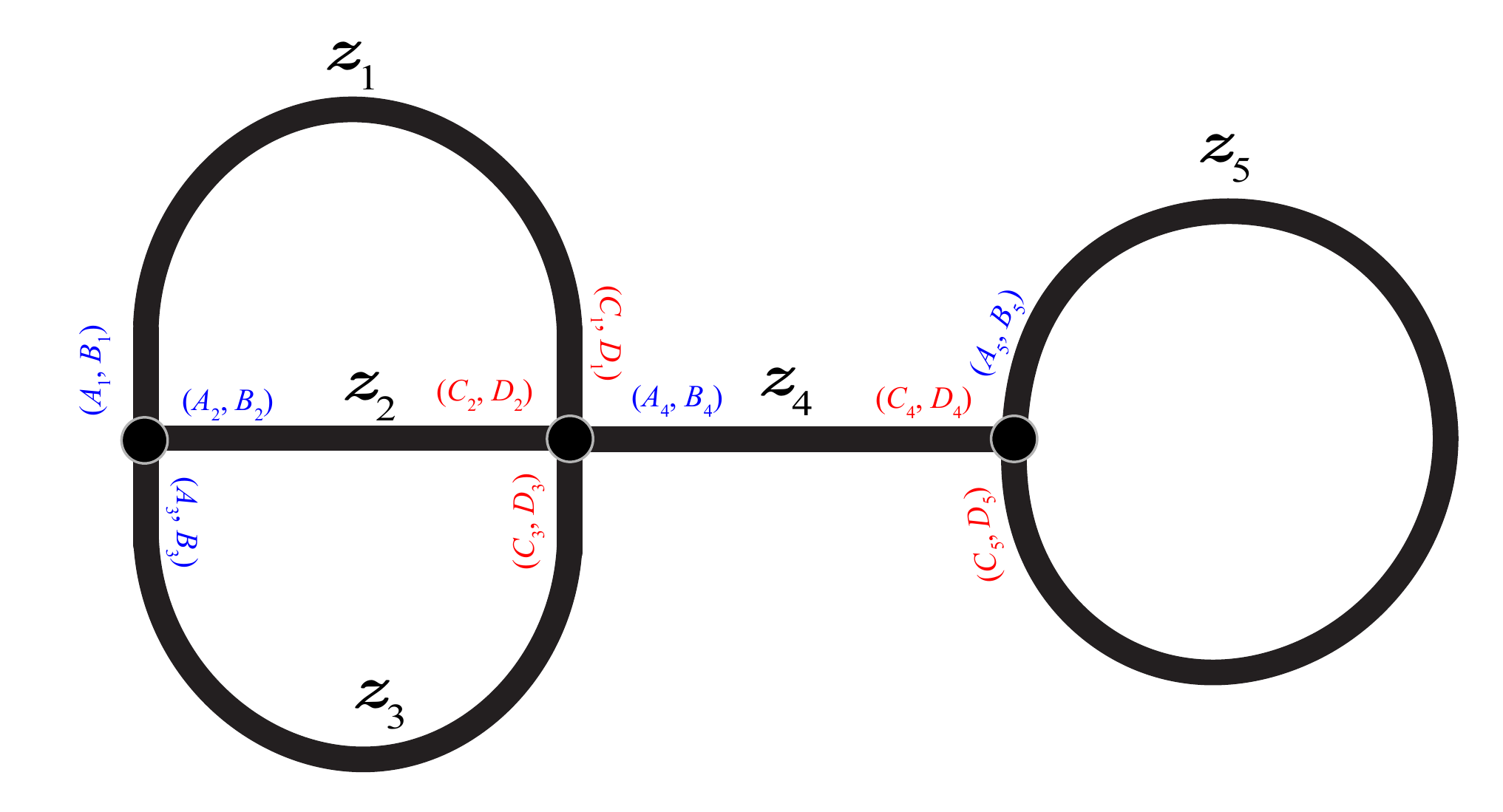}
 	
 	\caption{\label{fig: parameters on graph} An example of a possible assignment of the parameters $ z_{j} $ and $ (A_{j}, B_{j}, C_{j}, D_{j}) $ to the edges of a graph. }
 \end{figure}

\begin{proof}[Proof of Lemma \ref{lem: trace space polynomial equation}]
	Observe that any $ (\bz,\xv)\in\T^{N}\times\C^{4N} $ has some choice of $\lv\in\R_{+}^{N},~~k\in\R_{+} $ and $ f\in H^{2}(\Gamma,\lv) $ such that $ (\bz,\xv)=(\exp(ik\lv),\tr_{k}(f)) $. By definition $ (\bz,\xv)\in\TS(\Gamma) $ if and only if $ (k,f) $ is an eigenpair of $ (\Gamma,\lv) $. Hence, we want to show that a pair $ (k^2,f)\in\R_{+}\times H^2(\Gamma,\lv) $ is an eigenpair of $ (\Gamma,\lv) $ if and only if $ (\bz,\xv)=(\exp(ik\lv),\tr_{k}(f)) $ satisfy conditions (1) and (2) in Lemma \ref{lem: trace space polynomial equation}. Condition (1) is equivalent to $ f $ satisfying the standard vertex conditions on every vertex, so it is necessary. Assuming that $ f $ satisfies the vertex conditions, then $ (k^2,f) $ is an eigenpair of $ (\Gamma,\lv) $ if and only if
	\[-f''|_{e_{j}}=k^2 f|_{e_{j}},\qquad\mbox{for all}\quad j=1,2,\ldots,N.\]
	For every edge $ e_{j} $, a general solution to the ODE above can be written in two ways:
	\begin{enumerate}
		\item[i)] $\qquad f|_{e_{j}}(t_{j})= A_{j}\cos(kt_{j})+B_{j}\sin(kt_{j}), $
		\item[ii)] $ \qquad f|_{e_{j}}(t_{j})=C_{j}\cos(k(\ell_{j}-t_{j}))+B_{j}\sin(k(\ell_{j}-t_{j}))$.
	\end{enumerate}
			By comparing the two equivalent solutions we see that 
	\begin{equation}\label{eq: edge equation 0}
		\begin{pmatrix}
			A_{j} \\ B_{j}
		\end{pmatrix}=\begin{pmatrix}
			\cos(k\ell_{j}) & \sin(k\ell_{j}) \\ \sin(k\ell_{j}) & -\cos(k\ell_{j})
		\end{pmatrix}\begin{pmatrix}
			C_{j} \\ D_{j}
		\end{pmatrix}.
	\end{equation}
	If we left-multiply \eqref{eq: edge equation 0} by the invertible matrix
	\[\begin{pmatrix}
		1 & i\\
		z_{j} & -iz_{j}
	\end{pmatrix},\quad\mbox{with}\quad z_{j}=e^{ik\ell_{j}}, \] and rearrange, we get the two needed equations,
	\begin{align*}
		A_{j}+iB_{j}- z_{j}(C_{j}-iD_{j}) & =0,\\
		C_{j}+iD_{j}- z_{j}(A_{j}-iB_{j}) & =0 .
	\end{align*}
	This proves that a point $ (\bz,\xv) $ lies in $ \TS(\Gamma) $ if and only if it satisfies conditions (1) and (2).\\
	
	In the above proof we have seen that condition (2) is equivalent to satisfying \eqref{eq: edge equation 0} for all edges. Fix $ \bz\in\Sigma(\Gamma) $, so that $ \TS_{\bz}(\Gamma) $ is the set of vectors $ \xv\in\C^{4N} $ that satisfy $ P_{\mathrm{std}}\xv =0 $ and \eqref{eq: edge equation 0} for all edges. This are linear equations with real coefficients and therefore $ \TS_{\bz}(\Gamma) $ is a subspace of $ \C^{4N} $ that has a basis of real vectors. 
\end{proof}
\begin{rem}
	The fact that $ \TS_{\bz}(\Gamma) $ has a basis of real vectors reflects the fact that the Laplacian and the standard vertex conditions are real and therefore any eigenspace has a basis of real eigenfunctions. If we consider the magnetic Laplacian, then the eigenspaces are no longer spanned by real eigenfunctions and therefore so does their traces. A possible future generalization of the trace space to a magnetic Laplacian would probably require to replace $ z_{j}\mapsto e^{i\alpha_{j}}z_{j} $ in \eqref{eq: edge equation 1} and $ z_{j}\mapsto e^{-i\alpha_{j}}z_{j} $ in \eqref{eq: edge equation 2}, in which case $ \TS_{\bz}(\Gamma) $ will no longer have a basis of real vectors.    
\end{rem}

\begin{rem}\label{rem: edge equation solutions}
	A solution to \eqref{eq: edge equation 1} and \eqref{eq: edge equation 2}, given a fixed $ z_{j} $ with $ |z_{j}|=1 $, satisfies
	\[\|(A_{j},B_{j})\|=\|(C_{j},D_{j})\|.\]
	If $ A_{j}=B_{j}=C_{j}=D_{j}=0 $, the solution is independent of $ z_{j} $.   
\end{rem}
At this point, we can see that the secular manifold 
\[\Sigma(\Gamma):=\set{\bz\in\T^{N}}{\exists \xv\in \C^{4N}\quad\mbox{s.t.}\quad (\bz,\xv)\in\TS(\Gamma)},\]
is the projection of an algebraic variety (i.e. common zero set of polynomials) but it is not clear that $ \Sigma(\Gamma) $ is itself a zero set of a polynomial. However, the standard construction of $ \Sigma(\Gamma) $ is as a zero set of a polynomial $ P_{\Gamma} $ restricted to $ \T^{N} $, as was first done in \cite{BarGas_jsp00}. We will refer to this construction as the ``unitary approach", and we will now show how the trace space is described in that language. The unitary approach, was motivated by scattering systems of propagating waves \cite{KotSmi_prl97}. Given an eigenpair $ (k^2,f) $, the restriction of $ f $ to the edge $ e_{j} $ can be written in another equivalent form 
\begin{equation}\label{eq: f in terms of complex amplitudes}
	f|_{e_{j}}(t_{j})=a_{j}e^{-ik(\ell_{j}-t_{j})}+b_{j}e^{-ikt_{j}},
\end{equation}
which is interpreted as incoming and outgoing waves with complex amplitudes $ a_{j} $ and $ b_{j} $. Denote the vector of amplitudes $ f $ by $ \textbf{a}:=(a_{1},\ldots,a_{N},b_{1},\ldots,b_{N})\in \C^{2N} $. There exists a real orthogonal $ 2N\times2N $ matrix $ S $ such that
\begin{equation}\label{eq: eigenvalue equation with U}
	\diag(\bz,\bz)S\textbf{a}=\textbf{a}\iff f\in\eig(\Gamma,\lv,k),\qquad \bz=\exp(ik\lv).
\end{equation} 
See \cite{GnuSmi_ap06} or Section 2.1 of \cite{BerKuc_graphs} for a proof. In the above, $ \diag(\bz,\bz) $ is a diagonal $ 2N\times 2N $ matrix with $ (\bz,\bz)=(z_{1},\ldots,z_{N},z_{1},\ldots,z_{N})$ along the diagonal.
\begin{rem}
	The fixed matrix $ S $, is often called the \emph{bond scattering matrix} of $ \Gamma $ and is given explicitly in \cite{BerKuc_graphs,GnuSmi_ap06}. The fact that it is real orthogonal and $ k $ independent is due to the standard vertex conditions. For general vertex conditions, $ S $ is unitary and $ k $ dependent.
\end{rem}  
\begin{defn}
	Given a graph $ \Gamma $ and its associated $ S $ matrix, let 
	\[U(\bz):=\diag(\bz,\bz)S.\]
	The \emph{characteristic polynomial} or \emph{secular polynomial} of the graph $ \Gamma $ is defined by 
	\[P_{\Gamma}(\bz):=\det(\mathbb{I}_{2N}-U(\bz)),\]
	where $ \mathbb{I}_{2N} $ is the $ 2N\times2N $ identity matrix.
\end{defn}
\begin{lem}\cite{BarGas_jsp00,BerWin_tams10,CdV_ahp15}\label{lem: sec man in U}
	The secular manifold is the zero set of $ P_{\Gamma}(\bz) $ restricted to $ \T^{N} $, i.e., 
	\[\Sigma(\Gamma)=\set{\bz\in\T^{N}}{P_{\Gamma}(\bz)=0}.\]
	It is a subvariety of $ \T^{N} $ of real dimension $ N-1 $. We partition $ \Sigma(\Gamma) $ into two subsets, 
	\begin{align*}
		\msing(\Gamma):= & \set{\bz\in\T^{N}}{P_{\Gamma}(\bz)=0,~~\nabla P_{\Gamma}(\bz)=0},\\
		\mreg(\Gamma):= & \set{\bz\in\T^{N}}{P_{\Gamma}(\bz)=0,~~\nabla P_{\Gamma}(\bz)
			\ne0}.
	\end{align*}
		This partition captures the multiplicity of the eigenvalues. An eigenvalue $ k^2\ne0 $ of $ (\Gamma,\lv) $ is simple when $ \exp(ik\lv)\in\mreg(\Gamma) $, and has multiplicity when $ \exp(ik\lv)\in\msing(\Gamma) $. 
\end{lem}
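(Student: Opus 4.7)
The statement has three parts: the identity $\Sigma(\Gamma)=\{\bz\in\T^{N}:P_{\Gamma}(\bz)=0\}$, the real dimension being $N-1$, and the characterization of multiplicity via regular versus singular points of $P_{\Gamma}$. I would prove (1) by direct translation of the eigenvalue equation, and then establish (2) and (3) simultaneously through an explicit computation of $\nabla P_{\Gamma}$ via Jacobi's formula, leveraging the unitarity of $U(\bz)$ on $\T^{N}$.

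\textbf{Part (1).} Equation \eqref{eq: eigenvalue equation with U} states that, for fixed $\lv\in\R_{+}^{N}$ and $k>0$ with $\bz=\exp(ik\lv)$, there is a corresponding eigenfunction $f\in\eig(\Gamma,\lv,k)$ if and only if $U(\bz)\mathbf{a}=\mathbf{a}$ admits a nonzero solution $\mathbf{a}\in\C^{2N}$. This is equivalent to $\det(\mathbb{I}_{2N}-U(\bz))=P_{\Gamma}(\bz)=0$. For the converse inclusion, any $\bz\in\T^{N}$ with $P_{\Gamma}(\bz)=0$ can be realized as $\exp(ik\lv)$ for some $\lv\in\R_{+}^{N}$ and $k>0$, and a nonzero kernel vector reconstructs an eigenfunction through \eqref{eq: f in terms of complex amplitudes}. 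The $k=0$ case is handled by the remark at the start of Section \ref{sec: Tspace and secular manifold}.

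\textbf{Parts (2) and (3).} Writing $D(\bz)=\diag(\bz,\bz)$ and $P_{j}=e_{j}e_{j}^{T}+e_{N+j}e_{N+j}^{T}$, we have $\partial_{z_j}U(\bz)=P_{j}S$, so Jacobi's formula gives
\[\partial_{z_j}P_{\Gamma}(\bz)=-\tr\!\bigl(\adj(\mathbb{I}_{2N}-U(\bz))\,P_{j}S\bigr).\]
If the eigenvalue $k^{2}$ has multiplicity at least two, then $\rank(\mathbb{I}_{2N}-U(\bz))\le 2N-2$, so $\adj(\mathbb{I}_{2N}-U(\bz))=0$ and hence $\nabla P_{\Gamma}(\bz)=0$, placing $\bz\in\msing(\Gamma)$. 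If the eigenvalue is simple, then $\rank(\mathbb{I}_{2N}-U(\bz))=2N-1$, so $\adj(\mathbb{I}_{2N}-U(\bz))$ is a nonzero rank-one matrix. Unitarity of $U(\bz)$ on $\T^{N}$ implies that whenever $U\mathbf{v}=\mathbf{v}$ we also have $\bar{\mathbf{v}}^{T}U=\bar{\mathbf{v}}^{T}$, so $\adj(\mathbb{I}_{2N}-U)=\beta\,\mathbf{v}\bar{\mathbf{v}}^{T}$ with a nonzero scalar $\beta$. Using $S\mathbf{v}=D(\bz)^{-1}\mathbf{v}=\diag(\bar{\bz},\bar{\bz})\mathbf{v}$ (since $|z_{j}|=1$), a short computation yields
\[\partial_{z_j}P_{\Gamma}(\bz)=-\beta\,\bar{z}_{j}\bigl(|v_{j}|^{2}+|v_{N+j}|^{2}\bigr),\]
which cannot vanish simultaneously for every $j$ because $\mathbf{v}\neq 0$. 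Hence $\bz\in\mreg(\Gamma)$, proving (3). For (2), the nonvanishing of $\nabla P_{\Gamma}$ at every point of $\mreg$ exhibits $\Sigma(\Gamma)$ locally as a smooth real hypersurface of $\T^{N}$, and since at least one simple nonzero eigenvalue exists for some choice of $\lv$ (so that $\mreg\ne\emptyset$), the real dimension of $\Sigma(\Gamma)$ is exactly $N-1$.

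\textbf{Main obstacle.} The technical heart of the argument is the identification $\adj(\mathbb{I}_{2N}-U)=\beta\,\mathbf{v}\bar{\mathbf{v}}^{T}$ together with $\beta\neq 0$. The unitarity of $U(\bz)$ on $\T^{N}$ is essential: it forces the left eigenvector at eigenvalue $1$ to be the complex conjugate of the right eigenvector, which is precisely what makes the gradient components proportional to the strictly positive quantities $|v_{j}|^{2}+|v_{N+j}|^{2}$. Without this positivity, one could not exclude the possibility of accidental cancellations causing the gradient to vanish at a simple eigenvalue, and the clean dichotomy between $\mreg$ and $\msing$ would break down.
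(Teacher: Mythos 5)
Your proof is correct and follows essentially the same route as the paper: part (1) via the equivalence \eqref{eq: eigenvalue equation with U}, and the multiplicity dichotomy via Jacobi's formula together with the rank/adjugate analysis of $\mathbb{I}_{2N}-U(\bz)$, arriving at the same key identity $\partial_{z_j}P_{\Gamma}(\bz)\propto \bar z_j\left(|v_j|^2+|v_{N+j}|^2\right)$. Your explicit use of unitarity to identify the left kernel with the conjugate of the right kernel is a slightly more careful justification of the adjugate's rank-one form than the paper's appeal to the orthogonal projection, but it is the same argument in substance.
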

	The statements of this lemma can be found for example in Theorem 1.1 of \cite{CdV_ahp15}, up to working with the flat torus $ \R^{N}/2\pi\Z^{N} $ instead of $ \T^{N} $. Nevertheless we will provide a proof later using the trace space, for completeness. 
\begin{rem}
	As a subvariety $ \Sigma(\Gamma) $ can have singular points. The set $ \msing(\Gamma) $ is the set of singular points of $ \Sigma(\Gamma) $. This is not immediate from $ \nabla P_{\Gamma}(\bz)=0 $ but requires the fact that $ P_{\Gamma} $ has no square factors, which we will show in Theorem \ref{Thm: Sarnak-Kurasov}. The complement $ \mreg(\Gamma) $ is the set of regular points of $ \Sigma(\Gamma) $ and is therefore a real analytic manifold. 
\end{rem}
Lemma \ref{lem: sec man in U} characterize $ \Sigma(\Gamma) $ as the set of $ \bz\in\T^{N} $ for which $ U(\bz) $ has eigenvalue equal to $ 1 $. We will show that the trace space is ``essentially" the pairs $ (\bz,\textbf{a}) $ such that $ U(\bz)\textbf{a}=\textbf{a}$. To this end, we need to relate $ \textbf{a} $ to a trace $ \xv $. We define the following $ 2N\times2N $ and $ 4N\times 2N $ matrices
\[J:=\begin{pmatrix}
	0 & \mathbb{I}_{N}\\ \mathbb{I}_{N} & 0
\end{pmatrix},\qquad M:=\begin{pmatrix}
	S+J\\ i(S-J)
\end{pmatrix},\]
where $ \mathbb{I}_{N} $ is the $ N $ dimensional identity matrix.
\begin{lem}\label{lem: trace space in U}
	Given a graph $ \Gamma $ of $ N $ edges, its trace space is equal to 	\[\TS(\Gamma)=\set{(\bz,M\textbf{a})}{\bz\in\Sigma(\Gamma)~~\mbox{ and  }~~U(\bz)\textbf{a}=\textbf{a}}.\]
\end{lem}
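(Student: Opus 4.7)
The plan is to translate between two representations of each eigenfunction on each edge: the \emph{real form} $A_j\cos(kt_j)+B_j\sin(kt_j)$ from which the scale invariant trace $\xv$ is read off, and the \emph{wave form} $a_je^{-ik(\ell_j-t_j)}+b_je^{-ikt_j}$ that enters equation \eqref{eq: eigenvalue equation with U}. Throughout I take $k\ne 0$; the $k=0$ case is already handled by the remark immediately preceding the lemma.

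Any eigenpair $(k^2,f)$ of $(\Gamma,\lv)$ uniquely determines an amplitude vector $\textbf{a}=(a_1,\dots,a_N,b_1,\dots,b_N)\in\C^{2N}$ via the wave form. By \eqref{eq: eigenvalue equation with U}, $f$ satisfies the standard vertex conditions iff $U(\bz)\textbf{a}=\textbf{a}$ with $\bz=\exp(ik\lv)$; on $\T^N$ this is equivalent to $S\textbf{a}=\diag(\bar\bz,\bar\bz)\textbf{a}$, since $|z_j|=1$. This already gives a natural bijection between pairs $(\bz,\textbf{a})$ with $U(\bz)\textbf{a}=\textbf{a}$ and elements $(\bz,\tr_k(f))\in\TS(\Gamma)$ above $\bz$; the only nontrivial task is to verify $\tr_k(f)=M\textbf{a}$.

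That is a short direct computation. Evaluating the wave form and its derivative at $t_j=0$ and $t_j=\ell_j$ (with $z_j=e^{ik\ell_j}$, so $\bar z_j=e^{-ik\ell_j}$, and using the outgoing-normal convention $\gamma_N(f|_{e_j})(\ell_j)=-f'|_{e_j}(\ell_j)$) gives
\begin{align*}
A_j&=a_j\bar z_j+b_j, & C_j&=a_j+b_j\bar z_j,\\
B_j&=i(\bar z_j a_j-b_j), & D_j&=i(\bar z_j b_j-a_j).
\end{align*}
On the other hand, using $J\textbf{a}=(b_1,\dots,b_N,a_1,\dots,a_N)$ together with the constraint $S\textbf{a}=\diag(\bar\bz,\bar\bz)\textbf{a}$ yields
\begin{align*}
(S+J)\textbf{a}&=(A_1,\dots,A_N,C_1,\dots,C_N),\\
i(S-J)\textbf{a}&=(B_1,\dots,B_N,D_1,\dots,D_N),
\end{align*}
so $M\textbf{a}=\xv$ in the block ordering $(A,C,B,D)$ on $\C^{4N}$.

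Both inclusions then follow immediately. Given $(\bz,\xv)\in\TS(\Gamma)$, read off $\textbf{a}$ from the wave expansion of the witnessing eigenfunction; \eqref{eq: eigenvalue equation with U} gives $U(\bz)\textbf{a}=\textbf{a}$ and the calculation above gives $M\textbf{a}=\xv$. Conversely, given $\bz\in\Sigma(\Gamma)$ and $\textbf{a}$ with $U(\bz)\textbf{a}=\textbf{a}$, pick any $\lv,k$ with $\exp(ik\lv)=\bz$, build $f$ edgewise by the wave form, and cite \eqref{eq: eigenvalue equation with U} once more to conclude that $(k^2,f)$ is an eigenpair with $\tr_k(f)=M\textbf{a}$. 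The main ``obstacle'' is really just bookkeeping: matching the orderings of the $2N$- and $4N$-dimensional ambient spaces, and being scrupulous with the outgoing-normal sign in $D_j$. Once those are in place, $M\textbf{a}=\xv$ on the eigenspace is a one-line check.
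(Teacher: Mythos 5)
Your proof is correct and follows essentially the same path as the paper's: both translate between the wave expansion $a_je^{-ik(\ell_j-t_j)}+b_je^{-ikt_j}$ and the trigonometric expansion, use the constraint $U(\bz)\textbf{a}=\textbf{a}$ (i.e.\ $(S\textbf{a})_j=z_j^{-1}a_j$, $(S\textbf{a})_{j+N}=z_j^{-1}b_j$) to identify the Dirichlet entries with $(S+J)\textbf{a}$ and the Neumann entries with $i(S-J)\textbf{a}$, and conclude $\tr_k(f)=M\textbf{a}$. The sign and convention checks ($\bar z_j=z_j^{-1}$ on $\T^N$, the outgoing-normal sign in $D_j$) all match the paper's equation~\eqref{eq:amplitudes to trace 1}, so this is the same argument modulo presentation.
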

Equations \eqref{eq: eigenvalue equation with U} and \eqref{eq: f in terms of complex amplitudes} provides an isomorphism between an eigenspace $ \eig(\Gamma,\lv,k) $ and the kernel $ \ker(1-U(\bz)) $ for $ \bz=\exp(ik\lv) $, and as a corollary of the above lemma, both are isomorphic to the corresponding fiber. 
\begin{cor}\label{cor: isomorhpism}
		Let $ k^2>0 $ be an eigenvalue of $ (\Gamma,\lv) $ and let $ \bz=\exp(ik\lv) $. Then the $ \xv $ fiber above $\bz  $ be written in two ways:
		\begin{enumerate}
			\item[i)] $ \TS_{\bz}(\Gamma)=M(\ker(\mathbb{I}_{2N}-U(\bz))) $, and
			\item[ii)] $ \TS_{\bz}(\Gamma)=\tr_{k}(\eig(\Gamma,\lv,k)) $.  
		\end{enumerate} 
	Moreover, these are isomorphisms.
\end{cor}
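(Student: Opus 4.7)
The plan is to derive both descriptions in the corollary as fibers of the two different parameterizations of $\TS(\Gamma)$ already established. Part (i) is an immediate consequence of Lemma \ref{lem: trace space in U}: taking the fiber of the set $\{(\bz,M\textbf{a}) : \bz\in\Sigma(\Gamma),\, U(\bz)\textbf{a}=\textbf{a}\}$ above the chosen $\bz$ gives $M(\ker(\mathbb{I}_{2N}-U(\bz)))$. For part (ii), I would first argue from the definition of $\TS(\Gamma)$ that $\tr_{k}(\eig(\Gamma,\lv,k))\subseteq \TS_{\bz}(\Gamma)$ is automatic, and then verify the reverse inclusion: if $\xv\in\TS_{\bz}(\Gamma)$ then there exist $\lv',k',f'$ with $\exp(ik'\lv')=\bz$ and $\xv=\tr_{k'}(f')$; reading off the 4-tuple $(A_j,B_j,C_j,D_j)$ from $\xv$ and defining $f|_{e_j}(t_j):=A_j\cos(kt_j)+B_j\sin(kt_j)$ produces a function whose scale-invariant trace equals $\xv$, using that $k\ell_j\equiv k'\ell'_j\pmod{2\pi}$ forces the $(C_j,D_j)$ block determined by \eqref{eq: edge equation 0} to coincide for both parameterizations. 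Scale-invariance of the standard vertex conditions (as recorded in the remark after Definition \ref{def: scale invaraint trace}) then shows $f\in\eig(\Gamma,\lv,k)$.

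Next I need to upgrade these identifications to isomorphisms. For $\tr_{k}\colon \eig(\Gamma,\lv,k)\to \TS_{\bz}(\Gamma)$, surjectivity is exactly the reverse inclusion just proven, and injectivity follows because on each edge an eigenfunction is determined by its first two amplitudes $(A_j,B_j)$, which are read off directly from the trace. For the map $\textbf{a}\mapsto M\textbf{a}$ from $\ker(\mathbb{I}_{2N}-U(\bz))$ to $\TS_{\bz}(\Gamma)$, surjectivity is the content of Lemma \ref{lem: trace space in U}, while injectivity follows from the structure of $M$: if $M\textbf{a}=0$ then $(S+J)\textbf{a}=0$ and $(S-J)\textbf{a}=0$, so $S\textbf{a}=0$, and since $S$ is orthogonal we conclude $\textbf{a}=0$.

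If one prefers a more conceptual route, one can instead build the chain of isomorphisms $\ker(\mathbb{I}_{2N}-U(\bz)) \xrightarrow{\sim} \eig(\Gamma,\lv,k) \xrightarrow{\sim} \TS_{\bz}(\Gamma)$: the first map sends $\textbf{a}$ to the function defined by \eqref{eq: f in terms of complex amplitudes}, which is a bijection because on each edge $e^{-ik(\ell_j-t_j)}$ and $e^{-ikt_j}$ are linearly independent (using $k>0$, so $k\ell_j\not\equiv 0\pmod{\pi}$ can be handled by a direct computation), and equation \eqref{eq: eigenvalue equation with U} identifies its image with $\eig(\Gamma,\lv,k)$. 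A short computation of the Dirichlet and Neumann traces of \eqref{eq: f in terms of complex amplitudes} then shows the composition of these two maps equals $\textbf{a}\mapsto M\textbf{a}$, consistently recovering (i).

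The only genuinely delicate point is verifying the reverse inclusion in part (ii), specifically that the $(C_j,D_j)$ amplitudes agree when passing from $(\lv',k')$ to $(\lv,k)$. This rests entirely on the $2\pi$-periodicity observation for $\cos(k\ell_j)$ and $\sin(k\ell_j)$ extracted from \eqref{eq: edge equation 0}, so even this step is short. The rest is bookkeeping, and no deep ingredients beyond Lemma \ref{lem: trace space polynomial equation} and Lemma \ref{lem: trace space in U} are needed.
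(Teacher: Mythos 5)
Your proposal is correct and essentially matches the paper's approach. The paper's own proof is terse: it checks only the injectivity of $\tr_{k}$ (from \eqref{eq: amplitudes}) and of $M$ (a short calculation equivalent to your observation that $(S+J)\textbf{a}=0$ together with $(S-J)\textbf{a}=0$ forces $\textbf{a}=0$), taking the set equalities (i) and (ii) as granted by Lemma~\ref{lem: trace space in U} and by the symmetry of $X(\Gamma)$ stated just before the definition of $\TS(\Gamma)$. What you add is an explicit verification of the reverse inclusion in (ii) via the $2\pi$-periodicity built into \eqref{eq: edge equation 0} and the scale-invariance of the vertex conditions; this is precisely the justification the paper leaves implicit, so your write-up is a sound and slightly more complete rendition of the intended proof. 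One small cosmetic slip in your alternative route: the parenthetical about $k\ell_{j}\not\equiv 0\pmod{\pi}$ is unnecessary, since on each edge $e^{-ik(\ell_{j}-t_{j})}$ and $e^{-ikt_{j}}$ are linearly independent as functions of $t_{j}$ whenever $k\neq 0$, which the hypothesis $k^{2}>0$ already guarantees.
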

\begin{proof}[Proof of Corollary \ref{cor: isomorhpism}]
	It is enough to show that $ \tr_{k}:\eig(\Gamma,\lv,k)\to\C^{4N} $ and $ M:\C^{2N}\to\C^{4N} $ are injective. The injectivity of $ \tr_{k} $ follows from \eqref{eq: amplitudes}. For $ M $, let $ \textbf{a}\in\C^{2N} $ such that $ M\textbf{a}=0 $. Since $ J^2=\mathbb{I}_{2N} $, the equation $ JM\textbf{a}=0 $ can be written as 
	\begin{align*}
		JS\textbf{a}+\textbf{a} & =0,\quad\mbox{and}\\
		i(JS\textbf{a}-\textbf{a}) & =0,
	\end{align*}
	namely $ \textbf{a}=JS\textbf{a}=-JS\textbf{a} $ so $ \textbf{a}=0 $.  
\end{proof}
\begin{proof}[Proof of Lemma \ref{lem: trace space in U}]
	Let $ \bz\in\Sigma(\Gamma) $ and consider a choice of an $ \lv\in\R_{+}^{N} $ and a non zero eigenvalue $ k^2 $ of $ (\Gamma,\lv) $ such that $ \bz=\exp(ik\lv) $. We need to show that
	\[\TS_{\bz}(\Gamma)=M(\ker(\mathbb{I}_{2N}-U(\bz))).\]
	By definition, $  \TS_{\bz}(\Gamma)=\tr_{k}(\eig(\Gamma,\lv,k))  $ so it is enough to show that for any eigenfunction $ f\in\eig(\Gamma,\lv,k)  $ with amplitudes vector $ \textbf{a} $ given by \eqref{eq: f in terms of complex amplitudes}, the following relation holds 
	\[\tr_{k}(f)=M\textbf{a}.\]
	By comparing \eqref{eq: f in terms of complex amplitudes} and \eqref{eq: amplitudes} which describe the same restriction $ f|_{e_{j}} $ in terms of $ (A_{j},B_{j},C_{j},D_{j}) $ and $ (a_{j},b_{j}) $, we get the following relation. 
\begin{equation}\label{eq:amplitudes to trace 1}
	\begin{pmatrix}
		A_{j} \\ B_{j}\\ C_{j} \\ D_{j}
	\end{pmatrix}  =	\begin{pmatrix}
		a_{j}z_{j}^{-1}+b_{j} \\ ia_{j}z_{j}^{-1}-ib_{j}\\
		a_{j}+b_{j}z_{j}^{-1} \\ -ia_{j}+ib_{j}z_{j}^{-1}
	\end{pmatrix}=\begin{pmatrix}
		z_{j}^{-1} & 1 \\ iz_{j}^{-1} & -i\\
		1 & z_{j}^{-1} \\ -i & iz_{j}^{-1}
	\end{pmatrix}\begin{pmatrix}
		a_{j} \\ b_{j}
	\end{pmatrix}. 
\end{equation}
Since $ \textbf{a}$ satisfies $ \diag(\bz,\bz)S\textbf{a}=\textbf{a} $, then for any $ j=1,2,\ldots,N $
\begin{align*}
	(S\textbf{a})_{j}=z_{j}^{-1}a_{j},&\quad (S\textbf{a})_{j+N}=z_{j}^{-1}b_{j},\\
	(J\textbf{a})_{j}=b_{j},&\quad\mbox{and}\quad (J\textbf{a})_{j+N}=a_{j},
\end{align*}
and so \eqref{eq:amplitudes to trace 1} can be written, for all edges simultaneously, as   
\[\xv=\tr_{k}(f)=M\textbf{a}.\]
\end{proof}
We may now prove Lemma \ref{lem: sec man in U} using Corollary \ref{cor: isomorhpism}.
\begin{proof}[Proof of Lemma \ref{lem: sec man in U}]
	The fact that $ \Sigma(\Gamma) $ is the zero set of $ P_{\Gamma}(\bz)=\det(\mathbb{I}_{2N}-U(\bz)) $ follows from \eqref{eq: eigenvalue equation with U}.\\
	
	Let $ k^2>0 $ be an eigenvalue of $ (\Gamma,\lv) $ and let $ \bz=\exp()ik\lv $. Let $ d= \dim(\ker(\mathbb{I}_{2N}-U(\bz)))  $. By Corollary \ref{cor: isomorhpism}, we know that $ d\ge 1 $, and that  $ k^2 $ is simple if $ d=1 $ and is multiple if $ d>1$. We need to show that $ d>1 $ if and only if both $ P_{\G} $ and $ \nabla P_{\Gamma} $ vanish at $ \bz $. To compute the derivatives of $ P_{\Gamma}(\bz):=\det(1-U(\bz)) $, we use the Jacobi formula for the derivative of a determinant $ \det(A) $ in terms of the adjugate matrix $ \adj (A) $, 
	\[\nabla P_{\Gamma}(\bz)=\mathrm{Trace}[\adj (1-U(\bz))\nabla(1-U(\bz))].\]
	The adjugate matrix $ \adj (A) $ is a matrix whose entries are minors of $ A $ and it satisfies $ A\adj (A)=\adj (A)A=\det(A)I $ where $ I $ is the identity matrix. In particular, it satisfies
	\begin{enumerate}
		\item If $ \dim(\ker(A))=0 $ then $ \adj (A)=\det(A)A^{-1} $.
		\item If $ \dim(\ker(A))>1 $, then $ \adj (A)=0 $.
		\item If $ \dim(\ker(A))=1 $ then $ \adj (A) $ is a rank one matrix proportional to the orthogonal projection on $\ker(A) $. 
	\end{enumerate}
	By Substituting $ A=\mathbb{I}_{2N}-U(\bz) $, Property (2) provides one side of the if and only if, namely that $ \nabla P_{\Gamma}(\bz)=0 $ when $ d>1$. For the other side, we want to show that $ d=1 $ implies $ \nabla P_{\Gamma}(\bz)\ne0 $. Assume that $ d=\dim(\ker(\mathbb{I}_{2N}-U(\bz)))=1 $  and let $ \textbf{a} $ be the (unique up to a phase) normalized vector in $\ker(\mathbb{I}_{2N}-U(\bz)) $. By property (3), 
	\begin{equation}\label{eq: adjugate}
		\adj (\mathbb{I}_{2N}-U(\bz)=c_{\bz}\textbf{a}\textbf{a}^{*},\qquad c_{\bz}\in\C\setminus\{0\}.
	\end{equation}
	Since $ \textbf{a}\ne 0 $, then $ |a_{j}|^2+|b_{j}|^2\ne0 $ for some $ j=1,2,\ldots N $. We calculate, 
	\begin{align*}
		\frac{\partial}{\partial_{z_{j}}}P_{\Gamma}(\bz)=&\mathrm{Trace}(\adj (1-U(\bz))(\frac{\partial}{\partial_{z_{j}}}\diag(\bz,\bz))S)\\
		=& c_{z} \textbf{a}^{*}(\frac{\partial}{\partial_{z_{j}}}\diag(\bz,\bz))S\textbf{a}\\
		= & c_{z}(\bar{a}_{j}(S\textbf{a})_{j}+\bar{b}_{j}(S\textbf{a})_{j+N})\\
		= & \frac{c_{z}}{z_{j}}(|a_{j}|^2+|b_{j}|^2)\ne 0,
	\end{align*}
	where in the last equality we used that $ (S\textbf{a})_{j}=\frac{a_{j}}{z_{j}} $ and $ (S\textbf{a})_{j+N}=\frac{b_{j}}{z_{j}} $, as we assumed that $ \diag(\bz,\bz)S\textbf{a}=\textbf{a} $. This argument proves that $ \nabla P_{\Gamma}\ne 0 $ when $ \dim(\ker(\mathbb{I}_{2N}-U(\bz)))=1 $.\\ 
\end{proof}
In the proof of Lemma \ref{lem: sec man in U} we have shown that
\[\frac{\partial}{\partial_{z_{j}}}P_{\Gamma}(\bz)=\frac{c_{z}}{z_{j}}(|a_{j}|^2+|b_{j}|^2),\qquad \frac{c_{z}}{z_{j}}\ne 0.\]
This leads to the next lemma.  
 \begin{lem}\label{lem: vanishihng on an edge}
	Let $ f $ be an eigenfunction of $ (\Gamma,\lv) $ with a simple non-zero eigenvalue $ k^2 $, so that $ \bz=\exp(ik\lv)\in\mreg(\Gamma) $. Then, for any edge $ e_{j} $,
	\[f|_{e_{j}}\equiv 0\iff  \frac{\partial}{\partial_{z_{j}}}P_{\Gamma}(\bz)=0.\]  
\end{lem}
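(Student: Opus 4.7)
The plan is to observe that this lemma is essentially a corollary of a computation already completed inside the proof of Lemma \ref{lem: sec man in U}, together with the unique representation of $f|_{e_{j}}$ in terms of the complex amplitudes $(a_{j},b_{j})$ from \eqref{eq: f in terms of complex amplitudes}.

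First, since $k^{2}$ is simple and non-zero, by Corollary \ref{cor: isomorhpism} the kernel $\ker(\mathbb{I}_{2N}-U(\bz))$ is one-dimensional, and one can pick a normalized $\textbf{a}=(a_{1},\ldots,a_{N},b_{1},\ldots,b_{N})\in\C^{2N}$ in this kernel which is exactly the amplitude vector of $f$ in the representation \eqref{eq: f in terms of complex amplitudes}. The proof of Lemma \ref{lem: sec man in U} established the explicit identity
\[
\frac{\partial}{\partial z_{j}}P_{\Gamma}(\bz)=\frac{c_{\bz}}{z_{j}}\bigl(|a_{j}|^{2}+|b_{j}|^{2}\bigr),
\]
with $c_{\bz}\in\C\setminus\{0\}$ and $z_{j}\in\T$. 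Hence $\frac{\partial}{\partial z_{j}}P_{\Gamma}(\bz)=0$ if and only if $a_{j}=b_{j}=0$.

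Second, I need to translate the vanishing $a_{j}=b_{j}=0$ into $f|_{e_{j}}\equiv 0$. From \eqref{eq: f in terms of complex amplitudes}, $f|_{e_{j}}(t_{j})=a_{j}e^{-ik(\ell_{j}-t_{j})}+b_{j}e^{-ikt_{j}}$. Since $k\ne 0$, the two exponentials $e^{-ik(\ell_{j}-t_{j})}$ and $e^{-ikt_{j}}$ are linearly independent functions on $[0,\ell_{j}]$ (they form a basis of solutions of $-u''=k^{2}u$), so $f|_{e_{j}}\equiv 0$ is equivalent to $a_{j}=b_{j}=0$. Combining the two equivalences gives the claim.

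There is really no serious obstacle here; the only thing one could worry about is that the particular $\textbf{a}$ used in the derivative formula might differ from the amplitude vector of the given eigenfunction $f$, but by simplicity of $k^{2}$ the kernel is one-dimensional, so both agree up to a non-zero scalar and the vanishing conditions $a_{j}=b_{j}=0$ are unaffected. The same linear independence argument also shows the chain $f|_{e_{j}}\equiv 0\Leftrightarrow a_{j}=b_{j}=0$ in either direction, so the lemma follows immediately.
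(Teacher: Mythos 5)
Your proof is correct and matches the approach the paper intends: the paper itself does not write a formal proof of this lemma but instead points immediately before the statement to the identity $\frac{\partial}{\partial z_{j}}P_{\Gamma}(\bz)=\frac{c_{\bz}}{z_{j}}(|a_{j}|^{2}+|b_{j}|^{2})$ established in the proof of Lemma \ref{lem: sec man in U} and says ``This leads to the next lemma.'' Your write-up simply makes explicit the two small steps the paper leaves implicit --- that simplicity forces the amplitude vector of $f$ to span the one-dimensional kernel, and that $a_{j}=b_{j}=0$ is equivalent to $f|_{e_{j}}\equiv 0$ via \eqref{eq: f in terms of complex amplitudes} --- which is exactly the intended argument.
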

The statement of Lemma \ref{lem: vanishihng on an edge} can be attributed\footnote{Both \cite{BanLev_prep16} and \cite{CdV_ahp15} showed that if $ (k^2,f) $ is an eigenpair of $ (\Gamma,\lv) $ and $ k^2 $ is simple, then $ \frac{\partial}{\partial \ell_{j}}k^2=0 $ if and only if $ f|_{e_{j}}\equiv 0 $. This statement can be shown to be equivalent to Lemma \ref{lem: vanishihng on an edge}} to \cite{BanLev_prep16,CdV_ahp15}. This lemma describes a property of the trace vector in terms of the $ \bz $ coordinates. The next lemma provides a stronger statement, constructing a rank-one matrix $ A(\bz) $, for $ \bz\in \mreg(\bz) $, that is proportional to the orthogonal projection onto the $ \xv $ fiber $ \TS(\Gamma)_{\bz}  $. 
\begin{lem}\label{lem: trace as Az}
	Consider the matrix $ M $ as in Lemma \ref{lem: trace space in U}. Define the $ \bz $ dependent matrix \[A(\bz):=M\adj (1-U(\bz))M^{*}.\]
	The  $4N\times 4N $ matrix $ A(\bz) $ has the following properties:
	\begin{enumerate}
		\item Its entries are polynomials in $ \bz $.
		\item If $ \bz\in\msing(\Gamma) $, then $ A(\bz)=0 $.
		\item If  $ \bz\in\mreg(\Gamma) $, then $ A(\bz)$ is proportional to the rank-one matrix $ \xv\xv^{*} $, for any $ \xv\ne0 $ such that $ (\bz,\xv)\in \TS(\Gamma) $. That is,
		\[A(\bz)=c_{\bz,\xv}\xv\xv^{*},\qquad c_{\bz,\xv}\in\C\setminus\{0\}.\]  
	\end{enumerate} 
\end{lem}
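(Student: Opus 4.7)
My plan is to verify the three properties by unpacking the definition of $A(\bz)$ and invoking the properties of the adjugate matrix listed in the proof of Lemma~\ref{lem: sec man in U}, together with Corollary~\ref{cor: isomorhpism}.

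First, for property (1), I would simply observe that $M$ and $M^*$ are constant $4N\times 2N$ and $2N\times 4N$ matrices of complex numbers, while the entries of $U(\bz)=\diag(\bz,\bz)S$ are linear in $\bz$. The adjugate $\adj(\mathbb{I}_{2N}-U(\bz))$ consists of signed $(2N-1)\times(2N-1)$ minors of $\mathbb{I}_{2N}-U(\bz)$, so each entry is a polynomial of degree at most $2N-1$ in $\bz$. Hence $A(\bz)=M\,\adj(\mathbb{I}_{2N}-U(\bz))M^*$ has polynomial entries in $\bz$.

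For property (2), I would use Lemma~\ref{lem: sec man in U} which tells us that $\bz\in\msing(\Gamma)$ exactly when $k^2$ is a multiple eigenvalue, i.e.\ $\dim\ker(\mathbb{I}_{2N}-U(\bz))>1$. By the adjugate property (2) listed in the proof of Lemma~\ref{lem: sec man in U}, this forces $\adj(\mathbb{I}_{2N}-U(\bz))=0$, and hence $A(\bz)=0$.

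For property (3), which is the main point, I would take $\bz\in\mreg(\Gamma)$, so by Lemma~\ref{lem: sec man in U} we have $\dim\ker(\mathbb{I}_{2N}-U(\bz))=1$; let $\textbf{a}$ be a normalized spanning vector. Since $\bz\in\T^{N}$ and $S$ is real orthogonal, $U(\bz)$ is unitary, and therefore $\mathbb{I}_{2N}-U(\bz)$ is normal with $\ker=\ker^{*}=\C\textbf{a}$. The adjugate property (3) listed in the proof of Lemma~\ref{lem: sec man in U} then gives
\[
\adj(\mathbb{I}_{2N}-U(\bz))=c_{\bz}\,\textbf{a}\textbf{a}^{*},\qquad c_{\bz}\in\C\setminus\{0\},
\]
(nonzero because the rank of $\mathbb{I}_{2N}-U(\bz)$ is $2N-1$, so some $(2N-1)\times(2N-1)$ minor is nonzero). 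Substituting this into the definition yields
\[
A(\bz)=c_{\bz}(M\textbf{a})(M\textbf{a})^{*}.
\]
Now Corollary~\ref{cor: isomorhpism} tells us that $\TS_{\bz}(\Gamma)=M(\ker(\mathbb{I}_{2N}-U(\bz)))$ is the one-dimensional line $\C M\textbf{a}$, so any nonzero $\xv$ in the fiber equals $\lambda M\textbf{a}$ for some $\lambda\in\C\setminus\{0\}$. Thus $\xv\xv^{*}=|\lambda|^{2}(M\textbf{a})(M\textbf{a})^{*}$, and setting $c_{\bz,\xv}:=c_{\bz}/|\lambda|^{2}$ we obtain $A(\bz)=c_{\bz,\xv}\xv\xv^{*}$ with $c_{\bz,\xv}\ne 0$, as required.

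The only non-routine step is the passage from the kernel of $\mathbb{I}_{2N}-U(\bz)$ being one-dimensional to $\adj(\mathbb{I}_{2N}-U(\bz))$ being proportional to the orthogonal rank-one projection $\textbf{a}\textbf{a}^{*}$ (rather than a more general $\textbf{a}\textbf{b}^{*}$). This is where the unitarity of $U(\bz)$ on $\T^{N}$ is essential, since it yields $\ker=\ker^{*}$; this was already implicitly used in the excerpt's treatment of the adjugate and the proof of Lemma~\ref{lem: sec man in U}, so I would invoke it rather than re-derive it.
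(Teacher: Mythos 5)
Your proof is correct and follows essentially the same route as the paper: polynomiality of the minors for (1), vanishing of the adjugate when the kernel has dimension $>1$ for (2), and the substitution $\adj(\mathbb{I}_{2N}-U(\bz))=c_{\bz}\textbf{a}\textbf{a}^{*}$ combined with $\TS_{\bz}(\Gamma)=\C M\textbf{a}$ for (3). Your explicit justification that the adjugate is proportional to $\textbf{a}\textbf{a}^{*}$ (rather than $\textbf{a}\textbf{b}^{*}$ with $\textbf{b}$ spanning the left kernel) via the normality of $\mathbb{I}_{2N}-U(\bz)$ on $\T^{N}$ is a welcome addition, as the paper asserts this adjugate property without comment; you also correctly use $|\lambda|^{2}$ where the paper's displayed constant $(c')^{2}$ is a minor slip.
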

\begin{proof}
	Since $ M $ is constant, the entries of $ A(\bz) $ are linear in the entries of $ \adj (\mathbb{I}_{2N}-U(\bz)) $ which are minors of $ (\mathbb{I}_{2N}-U(\bz))=(\mathbb{I}_{2N}-\diag(\bz,bz)S) $ and hence polynomials in $ \bz $. If $ \bz\in\msing(\Gamma) $ then $ \dim(\ker(\mathbb{I}_{2N}-U(\bz)))>1 $ and hence $ \adj (\mathbb{I}_{2N}-U(\bz))=0 $, so $ A(\bz)=0 $. Now assume that $ \bz\in\mreg(\Gamma) $ so $ \dim(\ker(\mathbb{I}_{2N}-U(\bz)))=1 $, and let $ \xv\ne0 $ such that $ (\bz,\xv)\in \TS(\Gamma) $. According to \eqref{eq: adjugate}, given a normalized vector $ \textbf{a}\in\ker(\mathbb{I}_{2N}-U(\bz)) $, 
	\[	\adj (\mathbb{I}_{2N}-U(\bz)=\tilde{c}_{\bz}\textbf{a}\textbf{a}^{*},\]
	for some non-zero scalar $ \tilde{c}_{\bz} $. Since $ \ker(\mathbb{I}_{2N}-U(\bz)) $ is spanned by $ \textbf{a} $, then by Lemma \ref{lem: trace space in U},
	\[\xv=Mc'\textbf{a},\]
	for some non zero $ c' $. Denote the non-zero constant $ c_{\bz,\xv}=(c')^{2}\tilde{c}_{\bz} $ so that,
	\[\xv\xv^{*}=(c')^2 M\textbf{a}\textbf{a}^{*}M^{*}=c_{\bz,\xv}A(\bz).\]
\end{proof}
	\section{Graph reflection symmetries and the irreducible structure of the secular manifold}\label{sec: irreduicible}
	Consider the notation $ Z(p) $ for the zero set in $ \C^{N} $ of a given polynomial $ p $. The secular manifold, according to Lemma \ref{lem: sec man in U}, can be written as 
	\[\Sigma(\Gamma)=Z(P_{\Gamma})\cap\T^{N},\]
	in terms of the characteristic polynomial $ P_{\Gamma} $. We will show in this section that whenever $ P_{\Gamma} $ is irreducible (in the ring of polynomials $ \C[z_{1},z_{2}\ldots,z_{N}] $), any subvariety $ Z(q)\cap\Sigma(\Gamma) $ either has positive codimension in $ \Sigma(\Gamma) $ or it is equal to $ \Sigma(\Gamma) $. We say in such case that $ \Sigma(\Gamma) $ is irreducible. It was conjectured by Colin de Verdière that $ \Sigma(\Gamma) $ is irreducible if and only if $ (\Gamma,\lv) $ admits no $ \lv $-independent isometries (see the question prior to Proposition 1.1. in \cite{CdV_ahp15}). The main purpose of this section is to present a recent result of Kurasov and Sarnak \cite{KurasovBook,KurSar2022} which characterize the irreducible structure of $ P_{\Gamma} $ and proves the irreducibility conjecture mentioned above. This result is presented in Theorem \ref{Thm: Sarnak-Kurasov}. To state it, let us first characterize the $ \lv $-independent isometries in terms of \emph{reflection symmetries}. Given a map $ R:(\Gamma,\lv)\to (\Gamma,\lv) $ that sends every edge to itself, its restriction to every edge $ e_{j} $ is a map $ R|_{e_{j}}:[0,\ell_{j}]\to[0,\ell_{j}] $.
	\begin{defn}\label{def: reflection}
		We say that $ R:(\Gamma,\lv)\to (\Gamma,\lv) $ is a \emph{reflection symmetry} of $ \Gamma $ if 
		\begin{enumerate}
			\item $ R $ sends every edge $ e_{j} $ to itself, either by the identity map or by a reflection
			\[R|_{e_{j}}(x_{j})=\ell_{j}-x_{j}.\] 
			\item $ R $ preserves the graph structure, i.e., if an edge $ e $ is adjacent to a vertex $ v $, then $ R(e) $ is adjacent to $ R(v) $.
		\end{enumerate}
	\end{defn} 
It is not hard to show, as was already mentioned \cite{CdV_ahp15}, that an $ \lv $-independent isometry is a reflection symmetry, and can only happen if a graph has loops or is a mandarin graph.
\begin{lem}\label{lem: Reflections and isometries}\cite{CdV_ahp15,KurSar2022}
	An $ \lv $-independent isometry of $ (\Gamma,\lv) $ is a reflection symmetry, and vice versa. Furthermore, there are only two types of graphs that satisfy Assumption \ref{ass: assumptions} and have non-trivial reflection symmetries: 
		\begin{enumerate}
		\item \textbf{Mandarin graphs:} If $ \Gamma $ is a mandarin, then it has exactly one non-trivial reflection symmetry $ R $. It is a reflection on every edge. 
		\item \textbf{Graphs with loops:} If $ \Gamma $ has a loops, then for any loop $ e_{j} $ there is a reflection symmetry $ R_{j} $ acting by reflection on $ e_{j} $ and identity on all other edges. The group of reflection symmetries of $ \Gamma $ is generated by the loop reflections $ R_{j} $ for $ e_{j}\in\EL $.
	\end{enumerate}
\end{lem}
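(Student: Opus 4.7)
The plan is to split the argument into two parts: first the equivalence between $\lv$-independent isometries and reflection symmetries, and then the classification of graphs that admit a non-trivial reflection. For the equivalence, I would note that any isometry of $(\Gamma,\lv)$ must permute the topological singularities (vertices of degree different from two) and hence edges, while preserving edge lengths. If such an isometry is to be $\lv$-independent, specializing to $\lv$ with pairwise distinct entries forces every edge to be fixed set-wise; the only isometries of $[0,\ell_{j}]$ onto itself are the identity and $t_{j}\mapsto\ell_{j}-t_{j}$, matching Definition \ref{def: reflection}. The converse is immediate: both local options are $\lv$-independent, and the graph-structure condition in Definition \ref{def: reflection} ensures global consistency.

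For the classification, let $R$ be a non-trivial reflection symmetry, and distinguish two cases according to whether $R$ acts by reflection on at least one non-loop edge. In the first case, suppose $R$ reflects a non-loop edge $e_{j}$ with distinct endpoints $v_{0},v_{1}$; then $R$ swaps $v_{0}$ and $v_{1}$. The key rigidity step is: any other edge $e'$ incident to $v_{0}$ satisfies $R(e')=e'$, and since the vertex set of $e'$ is preserved while $R(v_{0})=v_{1}\ne v_{0}$, the edge $e'$ must be a non-loop with other endpoint $v_{1}$ on which $R$ acts by reflection. Moreover, any loop at $v_{0}$ would force its unique vertex to be fixed by $R$, contradicting $R(v_{0})=v_{1}$, so no such loop exists. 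The symmetric analysis applies at $v_{1}$, and by connectedness $V(\Gamma)=\{v_{0},v_{1}\}$ with every edge between them; hence $\Gamma$ is a mandarin. Conversely, on a mandarin the same constraint shows that any non-trivial $R$ must swap the two vertices and reflect every edge, yielding the unique non-trivial reflection symmetry.

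In the complementary case $R$ acts as the identity on every non-loop edge, and as either identity or reflection on each loop. Each loop reflection $R_{j}$ is individually a well-defined reflection symmetry (its only effect is to reverse the orientation of $e_{j}$, and the vertex of $e_{j}$ is fixed in either of the two local actions on a loop), and these commute pairwise. Therefore every non-trivial $R$ in this case is a non-empty product of the $R_{j}$ ranging over loops $e_{j}\in\EL$, so the group of reflection symmetries is generated by the loop reflections. Together with the mandarin case this establishes the dichotomy stated in the lemma.

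The main anticipated obstacle is the rigidity step in the first case: showing that a single reflected non-loop edge is enough to force $\Gamma$ to collapse to a mandarin. The argument is driven by the constraint $R(e')=e'$ combined with $R$'s induced action on vertices, and the delicate point is simultaneously ruling out loops at $v_{0}$ and $v_{1}$ and ensuring that every edge propagated from these vertices stays between them. Once this two-vertex configuration is established, the remaining uniqueness and loop-generation statements are bookkeeping.
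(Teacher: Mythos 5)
Your proposal is correct and follows essentially the same route as the paper: reduce an $\lv$-independent isometry to an edge-wise identity-or-reflection map, then split on whether a non-loop edge is reflected (forcing the two-vertex mandarin configuration via $R(e')=e'$ and the vertex swap) or only loops are reflected (yielding the group generated by the loop reflections $R_{j}$). Your extra details --- specializing to pairwise distinct edge lengths to pin each edge, and explicitly excluding loops at the swapped vertices --- are harmless refinements of the argument already in the paper.
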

We present a short proof for completeness.
	\begin{proof}
		Let $ R:(\Gamma,\lv)\to(\Gamma,\lv) $ be an $ \lv $-independent isometry. As an isometry between one-dimensional Riemannian manifolds with singularities, $ \R $ sends singular points to singular points and line segments to line segments of the same lengths. Hence, $ R $ sends vertices to vertices and each edge is mapped to and edge of the same length. Since $ R $ is $ \lv $ independent then it must send each edge to itself. In particular, for any edge $ e_{j} $, the restriction $ R|_{e_{j}}:[0,\ell_{j}]\to[0,\ell_{j}] $ is an isometry, and is therefore either the identity or a reflection. To conclude that $ R $ is a reflection symmetry, let $ e $ be an edge adjacent to a vertex $ v $, and take a sequence of points $ x_{n}\in e $ converging to $ v $. Then, the sequence $ R(x_{n})  $ lies in $ R(e) $ and converges to $ R(v) $ by the isometry, and hence $ R(e) $ is adjacent to $ R(v) $. We conclude that $ R $ is a reflection symmetry.\\
		
		Now, let $ \Gamma $ satisfy Assumption \ref{ass: assumptions}. Assume that $ R $ is a reflection symmetry that acts by reflection on an edge $ e $, and let us deduce the action on the rest of the edges. We consider two cases.
		\begin{enumerate}
			\item If $ e $ is not a loop, let $ v,u $ be its distinct vertices and notice that $ R(v)=u $ and vice-versa. We may conclude that every edge adjacent to one of the vertices $ u,v $ is adjacent to both of them. For example, if $ e' $ is adjacent to $ u $ then $ R(e')=e' $ is adjacent to $R(u)=v $. It follows that $ u $ and $ v $ are connected one to the other and cannot be connected to any other vertex. As we assume the graph is connected, $ \Gamma $ has only two vertices, and the previous argument shows that every edge of $ \Gamma $ connects $ u $ to $ v $, so $ \Gamma $ is a mandarin graph, and $ R $ acts by reflection on all edges.
			\item  If $ e $ is a loop, then $ R|_{e_{j}} $ is the identity for every $ e_{j} $ which is not a loop, by (1). On any other loop $ e' $, $ R|_{e'} $ can be either a reflection or identity, independently of its action on $ e $.
		\end{enumerate} 
	\end{proof} 
The irreducibility theorem of Kurasov and Sarnak can now be stated. 
\begin{thm}\label{Thm: Sarnak-Kurasov}\cite{KurasovBook,KurSar2022}
	Let $\Gamma$ be a graph that satisfies Assumption \ref{ass: assumptions}, then $\PG\in\C[z_{1},\ldots,z_{N}]$ is irreducible if and only if $ \Gamma $ has no reflection symmetries. Moreover, if $ \Gamma $ has a reflection symmetry, then $\PG$ factors as follows:
	\begin{enumerate}
		\item If $ \Gamma $ has loops, then
		\begin{equation}\label{eq: PG for loops}
			\PG(\bz)=P_{\Gammasym}(\bz)\prod_{e_{j}\in\EL}(1-z_{j}),
		\end{equation}
	where $ \EL $ is the set of loops, and $ P_{\Gammasym}(\bz) $ is irreducible. 
		\item If $ \Gamma $ is a mandarin graph, then
		\begin{equation}\label{eq: PG for mandarins}
			\PG(\bz)=P_{M,\mathrm{s}}(\bz)P_{M,\mathrm{as}}(\bz),
		\end{equation}
		where both $P_{M,\mathrm{s}}$ and $ P_{M,\mathrm{as}} $ are irreducible multi-linear polynomials.
		\[P_{M,\mathrm{s}}(\bz):=\sum_{j=1}^{E}(z_{j}-1)\prod_{i\ne j}(z_{i}+1),\qquad P_{M,\mathrm{as}}(\bz):=\sum_{j=1}^{E}(z_{j}+1)\prod_{i\ne j}(z_{i}-1). \]
	\end{enumerate}
\end{thm}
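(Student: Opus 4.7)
The plan is to prove each direction of the equivalence separately. For the ``if'' direction (existence of reflection implies reducibility), I would exploit the fact that every reflection symmetry $R$ commutes with the Laplacian and preserves the standard vertex conditions. In the unitary description, $R$ lifts to an involution on $\C^{2N}$ that commutes with $U(\bz)=\diag(\bz,\bz)S$ for every $\bz\in\T^N$. Consequently, $\ker(\mathbb{I}_{2N}-U(\bz))$ splits into $\pm 1$ eigenspaces of this involution, yielding a factorization $P_\Gamma(\bz)=P^{+}(\bz)P^{-}(\bz)$, where each factor is the determinant of $\mathbb{I}_{2N}-U(\bz)$ restricted to the corresponding invariant subspace. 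For the explicit formulas: on a loop $e_j$, an antisymmetric mode under $R_j$ must be supported on $e_j$ alone, and continuity together with the balanced-derivative condition at the vertex of the loop force both $\sin(k\ell_j)=0$ and $\cos(k\ell_j)=1$, contributing precisely the factor $(1-z_j)$. Iterating over all loops and then identifying the residual factor with $P_{\Gammasym}$ establishes \eqref{eq: PG for loops}. For the mandarin, I would compute $U(\bz)$ explicitly in a basis diagonalizing the vertex-swapping reflection and expand the resulting two block determinants by cofactor expansion (or induction on $N$) to obtain the multi-linear polynomials $P_{M,s}$ and $P_{M,as}$ in \eqref{eq: PG for mandarins}.

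For the ``only if'' direction (no reflection implies irreducibility), my plan is induction on the number of edges $N$ via controlled edge specializations. Setting $z_j=1$ corresponds to imposing a Neumann-type resonance on $e_j$, and the resulting polynomial $P_\Gamma(\bz)|_{z_j=1}$ can be identified with the secular polynomial of a simpler graph (obtained by contracting $e_j$ or by splitting it at its midpoint). If $P_\Gamma=pq$ were a nontrivial factorization, specialization would induce a nontrivial factorization of the reduced secular polynomial. By the inductive hypothesis, any such factorization of a smaller graph must originate from a reflection symmetry of that graph; the remaining task is to lift that symmetry back to a reflection symmetry of $\Gamma$, contradicting the hypothesis.

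The main obstacle is controlling the edge specializations so that they do not introduce ``spurious'' reflection symmetries in the reduced graph which have no lift to $\Gamma$. A more conceptual approach, which I believe is the one taken by Kurasov and Sarnak, is to analyze the Newton polytope of $P_\Gamma$ in $\R^N$. By an Ostrowski-type theorem, any nontrivial factorization $P_\Gamma=pq$ induces a nontrivial Minkowski decomposition of its Newton polytope, and the vertices of this polytope are indexed by explicit combinatorial data of $\Gamma$ (such as Eulerian sub-orientations or cut sets arising from the expansion of $\det(\mathbb{I}_{2N}-\diag(\bz,\bz)S)$). A careful combinatorial analysis then reveals that the only admissible Minkowski decompositions correspond to reflection symmetries of $\Gamma$, completing the proof. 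The hardest step is this combinatorial analysis: one must rule out exotic factorizations that do not come from graph symmetries, which requires a detailed understanding of which monomials appear in $P_\Gamma$ and with which signs, and ultimately a rigidity statement for the Newton polytope of a graph with no reflection.
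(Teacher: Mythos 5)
A preliminary remark: the paper does not prove Theorem \ref{Thm: Sarnak-Kurasov} at all --- it is imported verbatim from Kurasov and Sarnak \cite{KurasovBook,KurSar2022} and used as a black box. So there is no internal proof to compare against, and your proposal has to be judged on whether it would itself constitute a proof.

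The half of your argument showing that a reflection symmetry forces a factorization is sound in outline and matches machinery the paper does develop elsewhere: the lift of a reflection to an involution on $\C^{2N}$ commuting with $U(\bz)$, the resulting block decomposition $U(\bz)=U_{\mathrm{sym}}(\bz)\oplus U_{\mathrm{as}}(\bz)$, and the identification of the loop blocks with the factors $(1-z_{j})$ are exactly what appears in the proof of Lemma \ref{lem: trace space for loops} (citing \cite[Definition 5.11]{AloBanBer21universality}). Your derivation of the factor $(1-z_j)$ from an antisymmetric mode on a loop, and of $P_{M,\mathrm{s}},P_{M,\mathrm{as}}$ from the two blocks of the mandarin, is correct.

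The genuine gap is in the converse, which is the entire content of the theorem. Your induction-via-specialization route rests on an unproved key lemma --- that $P_{\Gamma}|_{z_{j}=1}$ is the secular polynomial of an edge-contracted graph and that a factorization of the specialization lifts coherently --- and you yourself identify the fatal obstruction (spurious symmetries of the contracted graph, plus the fact that contraction can create loops, multi-edges, and degree-two vertices that exit the class covered by the inductive hypothesis). Your fallback, the Newton polytope/Ostrowski argument, is stated only as a program: ``a careful combinatorial analysis then reveals that the only admissible Minkowski decompositions correspond to reflection symmetries'' is precisely the theorem restated, not an argument for it. The same omission affects the irreducibility of the listed factors $P_{\Gammasym}$, $P_{M,\mathrm{s}}$, $P_{M,\mathrm{as}}$, which is part of the statement and is nowhere addressed. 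As it stands the proposal proves the easy implication and defers the hard one; it is a research plan, not a proof.
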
 
\begin{rem}\label{rem: degree of PG}
	For later use we mention that $ \PG $ has degree $ 2 $ in every $ z_{j} $ (as shown in \cite{KurasovBook,KurSar2022} for example). Therefore, if $ \Gamma $ has loops, then the degree  of $ P_{\Gammasym}(\bz) $ in $ z_{j} $ is one when $ e_{j} $ is a loop and two otherwise.  
\end{rem}
The notations $ P_{\Gammasym}, P_{M,\mathrm{s}}$ and $ P_{M,\mathrm{as}} $ do not appear in \cite{KurasovBook,KurSar2022}. We introduce these notation to emphasize that the symmetry type of eigenfunctions in $ \eig(\Gamma,\lv,k) $ is dictated by the $ \PG $ factors that vanish at $ \bz=\exp(ik\lv) $. We elaborate on that in Subsection \ref{sec: subsection Reflections}, but first, let us discuss some applications of Theorem \ref{Thm: Sarnak-Kurasov}. 
\subsection{Applications of the irreducibility}
Recall the notation $ Z(p):=\set{\bz\in\C^{N}}{p(\bz)=0} $. The next lemma shows that if $ p $ is either $ P_{\Gamma} $ or an irreducible factor of $ P_{\Gamma} $, then $ Z(p)\cap\T^{N} $ is Zariski dense in $ Z(p) $.    
\begin{lem}\label{lem: zariski}
	Let $ \Gamma $ be a graph satisfying Assumption \ref{ass: assumptions} and define $ p\in\C[z_{1},\ldots,z_{N}] $ as follows. If $ P_{\Gamma} $ is irreducible, let $ p= P_{\Gamma} $, and if $ P_{\Gamma} $ is reducible, let $ p $ be an irreducible factor of $ P_{\Gamma} $. Then, given any polynomial $ q\in\C[z_{1},\ldots,z_{N}]  $, exactly one of the following holds.
	\begin{enumerate}
		\item Either $ Z(q)\cap Z(p)\cap\T^{N} $ has real dimension at most $ N-2 $, i.e., positive codimension in $ \Sigma(\Gamma) $, or
		\item $ p $ is a factor of $ q $, in which case $ q(\bz)=0 $ for every $\bz\in Z(p)\cap\T^{N}\subset \Sigma(\Gamma) $. 
	\end{enumerate}  
\end{lem}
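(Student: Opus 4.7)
The plan is to dispatch the two alternatives based on whether $p$ divides $q$. If $p\mid q$, then $q$ vanishes on all of $Z(p)$, in particular on $Z(p)\cap\T^{N}\subset\Sigma(\Gamma)$, which gives case (2) immediately. So the real content of the lemma is the implication ``$p\nmid q\Rightarrow$ case (1)'', to which I turn for the remainder of the plan.

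Assume $p\nmid q$. Since $p$ is irreducible in the unique factorization domain $\C[z_{1},\ldots,z_{N}]$, the ideal $(p)$ is prime and $Z(p)\subset\C^{N}$ is an irreducible complex hypersurface of complex dimension $N-1$. The hypothesis $p\nmid q$ means $q\notin(p)$, so $q$ does not vanish identically on $Z(p)$. Therefore $V:=Z(p)\cap Z(q)$ is a proper Zariski-closed subset of the irreducible variety $Z(p)$, and the standard dimension theorem for irreducible varieties forces the complex dimension to drop, giving $\dim_{\C}V\le N-2$.

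The remaining step is to upgrade this complex-dimension bound to the real-dimension bound on $V\cap\T^{N}$ required by (1). The key input is that $\T^{N}\subset\C^{N}$ is a \emph{maximally totally real} submanifold: at each point $\bz\in\T^{N}$ the real tangent space $T_{\bz}\T^{N}\subset\C^{N}$ has real dimension $N$ and contains no non-trivial complex subspace (a direct check at e.g.\ $\bz=(1,\ldots,1)$ gives $T_{\bz}\T^{N}=i\R^{N}$, whose intersection with $\R^{N}$ is trivial). Combined with the elementary linear-algebra fact that a totally real subspace of a complex vector space $W$ has real dimension at most $\dim_{\C}W$, this yields the general bound that for any complex algebraic set $V\subset\C^{N}$ one has $\dim_{\R}(V\cap\T^{N})\le\dim_{\C}V$. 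Applied to our $V=Z(p)\cap Z(q)$ this produces $\dim_{\R}(Z(p)\cap Z(q)\cap\T^{N})\le N-2$, which is exactly case (1).

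The main obstacle is making the last step rigorous at singular points of $V$. Concretely, one stratifies $V$ into smooth complex submanifolds $V=\bigsqcup_{\alpha}V_{\alpha}$ of complex dimensions $d_{\alpha}\le N-2$, and at a smooth point $\bz\in V_{\alpha}\cap\T^{N}$ verifies that the tangent space $T_{\bz}(V_{\alpha}\cap\T^{N})$ sits inside the intersection $T_{\bz}V_{\alpha}\cap T_{\bz}\T^{N}$, which is a totally real real subspace of the complex space $T_{\bz}V_{\alpha}$ and hence has real dimension at most $d_{\alpha}$. Summing over finitely many strata gives the global bound. Notably the proof uses only the irreducibility of $p$ and does not otherwise invoke Theorem \ref{Thm: Sarnak-Kurasov}; the Kurasov--Sarnak classification enters into the lemma only through the assertion that the factors $p$ admitted by its hypothesis are genuinely irreducible in $\C[z_{1},\ldots,z_{N}]$.
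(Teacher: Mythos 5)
Your argument for the implication ``$p\nmid q\Rightarrow$ case (1)'' is essentially correct and matches the second half of the paper's proof: the paper also deduces $\dim_{\C}\bigl(Z(p)\cap Z(q)\bigr)\le N-2$ from irreducibility and then bounds the real dimension of the intersection with $\T^{N}$, delegating that last step to Lemma~\ref{lem: real dimension bounded by complex dimension} in the Appendix. The paper's version of that lemma is argued via the Jacobian of $\bs{x}\mapsto Q(\exp(i\bs{x}))$ and the unitarity of $\diag(\bz)$, while you argue via stratification and totally real tangent subspaces; these are the same idea in different clothing.

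However, you have only proved half of the lemma. The statement says \emph{exactly} one of (1), (2) holds, and the direction you omit is the one that actually carries content beyond irreducibility: if $q$ vanishes identically on $Z(p)\cap\T^{N}$, then $p\mid q$. This is precisely what is used later in Lemma~\ref{lem: no common factors for different graphs} (to go from $Z(P_{\Gamma})\cap\T^{N}=Z(P_{\Gamma'})\cap\T^{N}$ to $Z(P_{\Gamma})=Z(P_{\Gamma'})$) and in Lemma~\ref{lem: mandarin and flower}, and your proof does not touch it. Nor does it follow from what you proved without an additional input: you would need to know that $\dim_{\R}\bigl(Z(p)\cap\T^{N}\bigr)=N-1$, so that (1) and (2) cannot coexist. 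Your closing remark that the proof ``uses only the irreducibility of $p$'' is therefore incorrect, and the counterexample is immediate: take $p=z_{1}-2$ and $q=p$. Then $p\mid q$, so (2) holds, but $Z(p)\cap\T^{N}=\emptyset$, so (1) also holds vacuously --- ``exactly one'' fails for a general irreducible $p$. What rescues the lemma is a structural property of $P_{\Gamma}$: the paper shows that $Z(P_{\Gamma})$ is disjoint from the open polydisc $D^{N}$ and from $(\C\setminus\overline{D})^{N}$, using $P_{\Gamma}(\bz)=\det(\mathbb{I}_{2N}-\diag(\bz,\bz)S)$ with $S$ orthogonal and the fact that $\diag(\bz,\bz)$ is strictly contracting (resp.\ expanding) on those regions. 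By a theorem in \cite{Agl2006toral}, $P_{\Gamma}$ and hence every factor $p$ is a ``toral'' polynomial: any polynomial vanishing on $Z(p)\cap\T^{N}$ vanishes on $Z(p)$, which together with irreducibility gives $p\mid q$. You need to add this toral-polynomial step; the dimension bound alone does not suffice.
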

\begin{proof}
	The terminology ``toral polynomial" was introduced in \cite{Agl2006toral}(see Definition 2.2 and Proposition 2.1) to describe a polynomial $ p $ with the property that any polynomial $ q $ that vanish on $ Z(p)\cap\T^{N} $ must vanish on $ Z(p) $. Denote the polydisc
	\[D^{N}:=\set{\bz\in\C^{N}}{|z_{j}|<1,~~j=1,2,\ldots,N},\] and its inverse
	\[(\C\setminus\overline{D})^{N}:=\set{\bz\in\C^{N}}{|z_{j}|>1,~~j=1,2,\ldots,N} .\]
	Theorem 3.5 in \cite{Agl2006toral} states that if $ Z(p) $ is disjoint from $ D^{N}\cup (\C\setminus\overline{D})^{N}$ then $ p $ is toral. We claim that this is the case for $ P_{\Gamma} $, as Sarnak and Kurasov discuss in \cite{kurasov2020stable}. To prove it, recall that $ P_{\Gamma}(\bz):=\det(1-\diag(\bz,\bz)S) $ and that $ S $ is real orthogonal. Assume $ \diag(\bz,\bz)S\textbf{a}=\lambda\textbf{a} $ for some non-zero $ \textbf{a} $, and notice that $ \|S\textbf{a}\|=\|\textbf{a}\| $ since $ S $ is orthogonal. If $ \bz\in D^{n} $, then $ \diag(\bz,\bz) $ is strictly contracting, i.e. $ \|\diag(\bz,\bz)\textbf{a} \|<\|\textbf{a}\| $. Hence $ \|\lambda\textbf{a}\|<\|\textbf{a}\| $ so $ \lambda\ne 1 $. It follows that $ P_{\Gamma}(\bz)\ne0 $ in that case. For the other case, $ \bz\in (\C\setminus\overline{D})^{N} $, a similar proof, using $ \|\diag(\bz,\bz)\textbf{a} \|>\|\textbf{a}\| $ in this case, would give that
	$ P_{\Gamma}(\bz)\ne0 $. We may conclude that $ Z(P_{\Gamma}) $ is disjoint from $ D^{N}\cup (\C\setminus\overline{D})^{N} $, and therefore $ P_{\Gamma} $ is toral. Let $ p $ be an irreducible polynomial which is either $ P_{\Gamma} $ or one of its factors. Clearly, $ Z(p)\subset Z( P_{\Gamma}) $ is also disjoint from $ D^{N}\cup (\C\setminus\overline{D})^{N} $ and therefore $ p $ is toral.\\
	
	Let $ q $ be any polynomial. If $ q $ vanish entirely on $ Z(p)\cap\T^{N} $, then we may deduce from $ p $ being toral that $ q $ vanish on $ Z(p) $, and since $ p $ is irreducible, it follows that $ p $ is a factor of $ q $. \\
	
	Now assume that $ p $ is not a factor of $ q $, so the common zero set $ V=Z(p)\cap Z(q) $ is a variety of complex dimension $ N-2 $. By Lemma \ref{lem: real dimension bounded by complex dimension}, we may deduce that $ Z(p)\cap Z(q)\cap\T^{N}=V\cap\T^{N} $ has real dimension at most $ N-2 $.    
\end{proof}
According to \ref{Thm: Sarnak-Kurasov}, either $ P_{\Gamma} $ is irreducible, or it is reducible but each factors appears once. We say that $ P_{\Gamma} $ has no square factors. It is a simple observation (by counting degrees) that an irreducible polynomial does not share factors with any of its derivatives. It follows that a reducible polynomial that shares a common factor with all of its derivatives must have a square factor. Since $ P_{\Gamma} $, even if reducible, has no square factors, then it must have at least one derivative $ \frac{\partial}{\partial z_{j}} P_{\Gamma} $ without a common factor. Then next corollary follows by applying Lemma \ref{lem: zariski} to the singular set
\[\msing(\Gamma)\subset Z(P_{\Gamma})\cap Z(\frac{\partial}{\partial z_{j}} P_{\Gamma})\cap\T^{N}.\]
\begin{cor}\label{cor: singular set}
	For any $ \Gamma $ satisfying Assumption \ref{ass: assumptions}, the singular set $\msing(\Gamma)$ is a subvariety of $ \Sigma(\Gamma) $ of positive codimension. That is, $\msing(\Gamma)$ has real dimension at most $ N-2 $. 
\end{cor}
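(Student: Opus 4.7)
The plan is to exploit the no-square-factors property of $P_\Gamma$ that is packaged in Theorem \ref{Thm: Sarnak-Kurasov}, and then invoke Lemma \ref{lem: zariski} separately on each irreducible component of $\Sigma(\Gamma)$. By Lemma \ref{lem: sec man in U}, every point of $\msing(\Gamma)$ lies in $Z(P_\Gamma) \cap Z(\partial_{z_j} P_\Gamma) \cap \T^N$ for each coordinate $j$, so the task reduces to showing that for every irreducible factor $p$ of $P_\Gamma$ there is some index $j$ for which $p$ is not a factor of $\partial_{z_j} P_\Gamma$.

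First I would factor $P_\Gamma = \prod_{i} p_i$ into distinct irreducible factors, which is possible by Theorem \ref{Thm: Sarnak-Kurasov}, either trivially (one factor) when $\Gamma$ has no reflection symmetries, or via \eqref{eq: PG for loops}/\eqref{eq: PG for mandarins} when it does; the key point is that no $p_i$ appears with multiplicity greater than one. Fix one such $p_i$ and write $P_\Gamma = p_i \cdot r_i$ with $\gcd(p_i, r_i) = 1$. Then the Leibniz rule gives
\[
\partial_{z_j} P_\Gamma = (\partial_{z_j} p_i)\, r_i + p_i\,(\partial_{z_j} r_i),
\]
so $p_i \mid \partial_{z_j} P_\Gamma$ would force $p_i \mid (\partial_{z_j} p_i)\, r_i$, and coprimality yields $p_i \mid \partial_{z_j} p_i$. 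Since $\deg_{z_j}(\partial_{z_j} p_i) < \deg_{z_j}(p_i)$ whenever $p_i$ has positive degree in $z_j$, this is only possible if $\partial_{z_j} p_i \equiv 0$. If this happened for every $j$, then $p_i$ would be a non-zero constant, which contradicts irreducibility; thus there exists an index $j_i$ with $p_i \nmid \partial_{z_{j_i}} P_\Gamma$.

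With $j_i$ in hand, I would apply Lemma \ref{lem: zariski} with $p = p_i$ and $q = \partial_{z_{j_i}} P_\Gamma$: the ``either/or" dichotomy forces the first alternative, so $Z(p_i) \cap Z(\partial_{z_{j_i}} P_\Gamma) \cap \T^N$ has real dimension at most $N-2$. Finally I would write
\[
\msing(\Gamma) \;\subseteq\; \bigcup_{i} \Bigl( Z(p_i) \cap Z(\partial_{z_{j_i}} P_\Gamma) \cap \T^N \Bigr),
\]
which is a finite union of closed subanalytic (in fact real algebraic) sets each of real dimension $\le N - 2$, hence itself has real dimension $\le N-2$. The only subtle point, and the one I would make sure to address carefully, is the verification that each irreducible factor $p_i$ has positive $z_j$-degree for some $j$; this is immediate from irreducibility and non-constancy, but one should record it, perhaps citing Remark \ref{rem: degree of PG} to handle the mandarin and loop factors explicitly.
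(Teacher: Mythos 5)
Your argument is correct, and it follows the same broad outline as the paper (Kurasov--Sarnak's no-square-factor structure of $P_\Gamma$ feeding into Lemma \ref{lem: zariski}), but it is more careful in a way that genuinely matters. The paper asserts, en route to the corollary, that ``a reducible polynomial that shares a common factor with all of its derivatives must have a square factor,'' and then works with a \emph{single} index $j$ for which $\partial_{z_j}P_\Gamma$ is coprime to $P_\Gamma$. That assertion is false without further hypotheses: for a flower with two loops one has $P_\Gamma = c\,(z_1z_2-1)(1-z_1)(1-z_2)$, and one checks $(1-z_2)\mid\partial_{z_1}P_\Gamma$ and $(1-z_1)\mid\partial_{z_2}P_\Gamma$, so \emph{every} partial derivative shares a factor with $P_\Gamma$ even though $P_\Gamma$ has no square factor. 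A single $j$ simply may not exist. Your version sidesteps this precisely because you choose the index $j_i$ \emph{per irreducible factor} $p_i$: since $p_i$ is irreducible and non-constant it has positive $z_{j_i}$-degree for some $j_i$, whence $p_i\nmid\partial_{z_{j_i}}p_i$, and coprimality of the distinct factors upgrades this to $p_i\nmid\partial_{z_{j_i}}P_\Gamma$. Covering $\msing(\Gamma)$ by $\bigcup_i\bigl(Z(p_i)\cap Z(\partial_{z_{j_i}}P_\Gamma)\cap\T^N\bigr)$ and applying Lemma \ref{lem: zariski} component by component then closes the argument. So your proof is not merely a restatement of the paper's; it repairs an actual gap in the paper's sketch while using the same two key ingredients. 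One tiny polish: rather than invoking Remark \ref{rem: degree of PG} to see that each $p_i$ depends on some $z_j$, it is cleaner (and what you already observe) to note this is automatic from $p_i$ being irreducible and non-constant.
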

\begin{rem}
	Corollary \ref{cor: singular set} is well known and can be found for example in \cite{CdV_ahp15}. However, to the best of our knowledge this is the first proof that does not rely on Friedlander's simplicity result \cite{Fri_ijm05}, which means that the results of this paper do not rely on \cite{Fri_ijm05}.
\end{rem}
The next lemma that we can now prove will be the main ingredient in proving Theorem \ref{thm: disjoint spectrum} regarding the common spectrum of metric graphs. 
\begin{lem}[No common factors]\label{lem: no common factors for different graphs}
	Let $ \Gamma $ and $ \Gamma' $ be two graphs with the same number of edges that satisfy Assumption \ref{ass: assumptions} and consider their polynomials $ P_{\Gamma}$ and $ P_{\Gamma'}  $. Then,
	\begin{enumerate}
		\item The polynomials are equal $ P_{\Gamma}=P_{\Gamma'} $, equivalently $ \Sigma(\Gamma)=\Sigma(\Gamma') $, if and only if $ \Gamma $ and $ \Gamma' $ are isomorphic graphs.
		\item If $ \Gamma $ and $ \Gamma' $ are not isomorphic, then $ P_{\Gamma}$ and $ P_{\Gamma'} $ \textbf{do not share any common factor}, except for two cases:
		\begin{enumerate}
			\item If $ \Gamma $ is a mandarin and $ \Gamma' $ is a flower, or vice versa.
			\item If $ \Gamma $ and $ \Gamma' $ share a common loop $ e_{j} $, in which case $ (1-z_{j}) $ is a common factor of $ P_{\Gamma}$ and $ P_{\Gamma'} $.
		\end{enumerate}  
	\end{enumerate} 
\end{lem}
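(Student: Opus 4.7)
The plan is to prove the two parts separately, both relying on Theorem~\ref{Thm: Sarnak-Kurasov} for the irreducible factorization of $\PG$.

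For part (1), the forward direction is immediate: a graph isomorphism from $\Gamma$ to $\Gamma'$ induces a permutation of edge variables under which $\PG$ and $P_{\Gamma'}$ become identical. For the converse, equality $\PG = P_{\Gamma'}$ forces $\Sigma(\Gamma) = \Sigma(\Gamma')$, and hence $\spec(\Gamma,\lv) = \spec(\Gamma',\lv)$ for every $\lv \in \R_+^N$. The plan is then to pick any $\Q$-independent $\lv$ and invoke the spectrum reconstruction theorem \cite{GutSmi_jpa01,KurNow_jpa05}, which recovers the underlying graph up to isomorphism from the spectrum at a $\Q$-independent length vector.

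For part (2), the strategy is to enumerate the irreducible factors of $\PG$ and $P_{\Gamma'}$ via Theorem~\ref{Thm: Sarnak-Kurasov} and match them across the two graphs by comparing multi-degrees in the $z_j$ variables. The irreducible factors of $\PG$ are: $\PG$ itself, of degree two in every variable, when $\Gamma$ has no loops and is not a mandarin; the factor $P_{\Gammasym}$, of degree one in each loop variable and two otherwise, together with the linear factors $(1-z_j)$ for each $e_j \in \EL$, when $\Gamma$ has loops; and the two multi-linear factors $P_{M,\mathrm{s}}, P_{M,\mathrm{as}}$ when $\Gamma$ is a mandarin. A direct substitution $z_j = 1$ into the explicit formulas for $P_{M,\mathrm{s}}$ and $P_{M,\mathrm{as}}$ shows that neither vanishes there, so a shared linear factor $(1-z_j)$ must arise from $e_j$ being a loop in both graphs, giving exceptional case (ii). A shared non-linear irreducible factor, compared by its multi-degree pattern, leaves only four possibilities. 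First, both graphs are generic, in which case $\PG = P_{\Gamma'}$ and part~(1) gives $\Gamma \cong \Gamma'$. Second, both graphs have loops with coinciding loop sets (forced by the degree-one pattern appearing in any shared factor), in which case $P_{\Gammasym} = P_{\Gamma',\mathrm{sym}}$, combined with equal linear loop factors, yields $\PG = P_{\Gamma'}$ and part~(1) again applies. Third, both graphs are mandarins with $N$ edges, which are necessarily isomorphic. Fourth, one graph is a mandarin and the other a flower, since the multi-linearity of $P_{M,\mathrm{s}}, P_{M,\mathrm{as}}$ is compatible with $P_{\Gamma,\mathrm{sym}}$ only when every edge of $\Gamma$ is a loop.

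The main obstacle is this last case: verifying the mandarin--flower exceptional case~(i). The plan is to compute $P_{\Gamma_{\mathrm{flower}}}$ explicitly using the bond scattering matrix of the flower (the all-to-all scattering at its unique vertex), factor out the linear terms $(1-z_j)$ for each loop, and check by a direct coefficient comparison that the remaining multi-linear polynomial equals $P_{M,\mathrm{s}}$ or $P_{M,\mathrm{as}}$ up to the obvious identification of edge labels between the $N$-flower and the $N$-mandarin. Once this identification is confirmed, the common factor in case (i) is exhibited, and the dichotomy of part~(2) is complete.
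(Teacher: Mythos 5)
Your approach is essentially the same as the paper's: both rely on the Kurasov--Sarnak factorization (Theorem~\ref{Thm: Sarnak-Kurasov}) and match irreducible factors by comparing their multi-degree patterns, using the fact (Remark~\ref{rem: degree of PG}) that $\PG$ has degree two in each variable, reduced to degree one precisely at loop variables (or all variables, for a mandarin). The paper organizes part~(2) a bit more economically with a WLOG (``$\PG$ is reducible'') followed by two cases, whereas you enumerate the four possible pairings explicitly; the logic is the same.

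One remark on your identified ``main obstacle.'' Verifying that an $N$-mandarin and an $N$-flower actually \emph{do} share a common factor is not needed to prove this lemma: the statement only asserts that any common factor forces one of the two exceptional configurations (and for case~(b) explicitly names the factor, but for case~(a) it makes no such claim). Your case-4 analysis already shows the implication ``shared non-linear factor, one is a mandarin and the other has loops $\Rightarrow$ the latter is a flower,'' which is all that is required. The converse identification $P_{\Gammasym} = c\,P_{M,\mathrm{s}}$ is proved in the paper not here but in a separate lemma (Lemma~\ref{lem: mandarin and flower}), via the trace space rather than by expanding the bond scattering matrix, and is only invoked later in the proof of Theorem~\ref{thm: disjoint spectrum}. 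So your proposed computation is sound but lives outside this lemma. Also, when you observe that $(1-z_j)$ does not divide $P_{M,\mathrm{s}}$ or $P_{M,\mathrm{as}}$, you can skip the substitution: these factors are irreducible by Theorem~\ref{Thm: Sarnak-Kurasov} and are not equal to $(1-z_j)$ because they depend on all variables, and the same irreducibility/degree argument disposes of $(1-z_j)\mid P_{\Gammasym}$ and $(1-z_j)\mid\PG$ for a generic graph.
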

\begin{proof}
	Part (1). Two graphs $ \Gamma $ and $ \Gamma' $ of $ N $ edges are isomorphic if and only if $ \spec(\Gamma,\lv)=\spec(\Gamma',\lv) $ for any $ \lv $.  In fact, it is enough to consider only one $ \Q $-independent choice of $ \lv $ as seen in  \cite{GutSmi_jpa01,KurNow_jpa05}. According to Lemma \ref{lem: sec man in U}, having $ \spec(\Gamma,\lv)=\spec(\Gamma',\lv) $ for any $ \lv $ is equivalent to
	\[\Sigma(\Gamma):=Z(P_{\Gamma})\cap\T^{N}=Z(P_{\Gamma'})\cap\T^{N}=:\Sigma(\Gamma'),\]
	which, by Lemma \ref{lem: zariski}, is equivalent to 
	\[Z(P_{\Gamma})=Z(P_{\Gamma'}).\]
	According to Theorem \ref{Thm: Sarnak-Kurasov}, both $ P_{\Gamma} $ and $ P_{\Gamma'} $ have no square factors, so the equality of their zero sets implies that the polynomials are equal up to a constant. This constant is $ 1 $ since $ P_{\Gamma}(0)=P_{\Gamma'}(0)=1 $ by construction. We may conclude that $ P_{\Gamma}=P_{\Gamma'} $ if and only if $ \Gamma $ and $ \Gamma' $ are isomorphic. \\
	
	To prove (2), assume that $ P_{\Gamma}\ne P_{\Gamma'} $ and further assume they have a common factor $ q $. We may assume without loss of generality that $ P_{\Gamma} $ is reducible and that $ q $ is an irreducible factor of $ P_{\Gamma} $. As discussed in Remark \ref{rem: degree of PG}, both $ P_{\Gamma} $ and $ P_{\Gamma'} $ have degree 2 in each $ z_{j} $, so $ q $ has degree at most $ 1 $ in some variable $ z_{j} $, and hence $ q\ne c P_{\Gamma'} $ for any constant $ c\in\C $. Therefore, $ q $ must is a non-trivial factor of $ P_{\Gamma'} $, which means that $ P_{\Gamma'} $ is reducible. We may conclude that either both graphs have loops, or one of them, say $ \Gamma $ with out loss of generality, is a mandarin. We treat the two cases separately.\\

	Case (i), assume $ \Gamma $ is a mandarin graph and $ \Gamma' $ has loops. We want need to show that if their polynomials share a common factor $ q $, then $ \Gamma' $ is a flower. Since $ \Gamma $ is a mandarin, either $ q=P_{M,\mathrm{s}} $ or $ q=P_{M,\mathrm{as}} $, and in both cases $ q $ is irreducible and has degree 1 in every $ z_{j} $. Since $ \Gamma' $ is a graph with loops, having such an irreducible factor implies that all edges are loops, by Remark \ref{rem: degree of PG}. We conclude that $ \Gamma' $ must be a flower.\\

	Case (ii), assume both $ \Gamma $ and $ \Gamma' $ have loops and that their polynomials share a common factor $ q $. Assume by contradiction that they do not share a common loop. Then their common factor must be $ P_{\Gammasym}=P_{\Gamma',\mathrm{sym}} $ by Theorem \ref{Thm: Sarnak-Kurasov}. Let $ j $ such that $ e_{j} $ is a loop of $ \Gamma $ and not a loop of $ \Gamma' $, then $ P_{\Gammasym}$ has degree 1 in $ z_{j} $ but $ P_{\Gamma',\mathrm{sym}}  $ has degree $ 2 $ in $ z_{j} $, according to Remark \ref{rem: degree of PG}. This leads to the needed contradiction. Hence  $ \Gamma $ and $ \Gamma'  $ share a loop edge $ e_{j} $ and so $ (1-z_{j}) $ is a common factor.
	

\end{proof}     

\subsection{Reflection symmetries and the trace space}\label{sec: subsection Reflections}
	Recall that for a mandarin graph, an eigenfunction $ f $ is called \emph{symmetric} if $ f\circ R=f $, where $ R $ is the reflection of all edges. Similarly, $ f $ is called \emph{anti-symmetric} if $ f\circ R=-f $. It is not hard to see that these properties are determined by the traces. Given an eigenpair $ (k^2,f) $, $ f $ is symmetric if and only if $ \tr_{k}(f) $ has $ (A_{j},B_{j})=(C_{j},D_{j}) $ for every edge $ e_{j} $, and it is anti-symmetric if and only if $ (A_{j},B_{j})=-(C_{j},D_{j}) $ for all edges.

\begin{lem}[Mandarin trace space symmetry]\label{lem: Mandarin trace space}
	Let $ \Gamma $ be a mandarin graph and consider a trace fiber $ \TS_{\bz}(\Gamma) $ for $ \bz\in\Sigma(\Gamma) $.  Define its symmetric and anti-symmetric subspaces
	\begin{align*}
		\TS_{\bz,\mathrm{s}}(\Gamma)&:=\set{\xv\in\TS_{\bz}(\Gamma)}{(A_{j},B_{j})=(C_{j},D_{j})~~~\mbox{for every edge   }e_{j}},\quad\mbox{and}\\
		\TS_{\bz,\mathrm{as}}(\Gamma)&:=\set{\xv\in\TS_{\bz}(\Gamma)}{(A_{j},B_{j})=-(C_{j},D_{j})~~~\mbox{for every edge   }e_{j}}.
	\end{align*}
Then, 
	\[\TS_{\bz}(\Gamma)=\TS_{\bz,\mathrm{s}}(\Gamma)\oplus\TS_{\bz,\mathrm{as}}(\Gamma),\]
	with the rule of
	\begin{align*}
		\TS_{\bz,\mathrm{s}}(\Gamma)\ne\{0\}&\iff P_{M,\mathrm{s}}(\bz)=0\\
		\TS_{\bz,\mathrm{as}}(\Gamma)\ne\{0\}&\iff P_{M,\mathrm{as}}(\bz)=0.
	\end{align*} 	 
\end{lem}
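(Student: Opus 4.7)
The plan is to leverage the reflection symmetry $R$ of the mandarin to produce compatible involutions on the amplitude space $\C^{2N}$ and on the trace space $\C^{4N}$. These will split both $\ker(\mathbb{I}_{2N}-U(\bz))$ and $\PG(\bz)$ into symmetric and anti-symmetric parts, after which matching against the factorization in Theorem~\ref{Thm: Sarnak-Kurasov} will identify the two pieces with $P_{M,\mathrm{s}}$ and $P_{M,\mathrm{as}}$.

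First I would introduce the involution $\sigma$ on $\C^{4N}$ that swaps $(A_{j},B_{j})\leftrightarrow(C_{j},D_{j})$ on every edge. By the two equivalent forms in Remark~\ref{rem: bijection}, this is precisely the pullback of $R$: writing $f|_{e_{j}}(t_{j})=A_{j}\cos(kt_{j})+B_{j}\sin(kt_{j})=C_{j}\cos(k(\ell_{j}-t_{j}))+D_{j}\sin(k(\ell_{j}-t_{j}))$, the function $(f\circ R)|_{e_{j}}(t_{j})=f|_{e_{j}}(\ell_{j}-t_{j})$ has $(A,B,C,D)$ amplitudes equal to $(C_{j},D_{j},A_{j},B_{j})$. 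Because $R$ is an isometry of $(\Gamma,\lv)$ preserving the standard vertex conditions, $\sigma$ preserves $\TS_{\bz}(\Gamma)$ for every $\bz\in\Sigma(\Gamma)$. By definition $\TS_{\bz,\mathrm{s}}(\Gamma)$ and $\TS_{\bz,\mathrm{as}}(\Gamma)$ are the $\pm 1$ eigenspaces of $\sigma$ restricted to $\TS_{\bz}(\Gamma)$, and the identity $\xv=\tfrac{1}{2}(\xv+\sigma\xv)+\tfrac{1}{2}(\xv-\sigma\xv)$ yields the direct-sum decomposition.

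Next I would transport this picture to the amplitude side of Section~\ref{sec: Tspace and secular manifold}. Under the parameterization \eqref{eq: f in terms of complex amplitudes}, the reflection $t_{j}\mapsto \ell_{j}-t_{j}$ interchanges $a_{j}\leftrightarrow b_{j}$, i.e., acts by the matrix $J=\begin{pmatrix}0 & \mathbb{I}_{N}\\ \mathbb{I}_{N} & 0\end{pmatrix}$ from the proof of Corollary~\ref{cor: isomorhpism}. Since $S$ depends only on the combinatorial graph and $R$ is a graph automorphism of the mandarin inducing exactly the bond-permutation $J$, one has $JS=SJ$; moreover $J$ commutes with $\diag(\bz,\bz)$ because its two diagonal blocks agree. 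Hence $JU(\bz)=U(\bz)J$ for every $\bz$. A short calculation using $M=\begin{pmatrix}S+J\\ i(S-J)\end{pmatrix}$ and $J^{2}=\mathbb{I}$ then gives $M\circ J=\sigma\circ M$, so $M$ intertwines the two involutions. Combined with Corollary~\ref{cor: isomorhpism}, this yields isomorphisms
\[
\TS_{\bz,\mathrm{s}}(\Gamma)\cong \ker(\mathbb{I}_{2N}-U_{+}(\bz)),\qquad \TS_{\bz,\mathrm{as}}(\Gamma)\cong \ker(\mathbb{I}_{2N}-U_{-}(\bz)),
\]
where $V_{\pm}\subset\C^{2N}$ are the $\pm 1$ eigenspaces of $J$ and $U_{\pm}(\bz):=U(\bz)|_{V_{\pm}}$.

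Finally, since $U(\bz)$ respects the splitting $V_{+}\oplus V_{-}$, taking determinants gives
\[
\PG(\bz)=\det(\mathbb{I}-U_{+}(\bz))\det(\mathbb{I}-U_{-}(\bz))=:P_{+}(\bz)P_{-}(\bz).
\]
By Theorem~\ref{Thm: Sarnak-Kurasov}, $\PG=P_{M,\mathrm{s}}P_{M,\mathrm{as}}$ with both factors irreducible and $\PG$ squarefree, so unique factorization in $\C[z_{1},\ldots,z_{N}]$ forces $\{P_{+},P_{-}\}=\{P_{M,\mathrm{s}},P_{M,\mathrm{as}}\}$ up to nonzero scalar multiples. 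The correct matching $P_{+}\propto P_{M,\mathrm{s}}$ is pinned down by a single test evaluation, for example using the constant eigenfunction at $k=0$, $\bz=(1,\ldots,1)$, which is symmetric and hence forces $P_{+}$ to vanish there while $P_{M,\mathrm{as}}$ does not. The stated equivalences then follow: $\TS_{\bz,\mathrm{s}}(\Gamma)\ne\{0\}$ iff $\ker(\mathbb{I}-U_{+}(\bz))\ne\{0\}$ iff $P_{+}(\bz)=0$ iff $P_{M,\mathrm{s}}(\bz)=0$, and analogously in the anti-symmetric case. I expect the main technical obstacle to be the equivariance $JS=SJ$, which requires recognizing $R$ as acting on bonds by the same combinatorial involution as $J$; everything else is linear algebra and the disambiguating evaluation.
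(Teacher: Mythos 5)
Your route is genuinely different from the paper's. The paper proves this lemma by brute force: it writes out the trace equations for a real $\xv$ with $(A_{j},B_{j})=(C_{j},D_{j})$, eliminates variables down to $A_{1}=\dots=A_{N}$, $\sum_{j}B_{j}=0$ and $z_{j}=\frac{A+iB_{j}}{A-iB_{j}}$, and then checks by a case analysis ($A\ne 0$ versus $A=0$) that solvability is exactly the vanishing of the explicit polynomial $P_{M,\mathrm{s}}$. You instead block-diagonalize $U(\bz)$ with respect to the involution $J$ and factor $\PG=\det(\mathbb{I}-U_{+})\det(\mathbb{I}-U_{-})$; this is structurally the same mechanism the paper uses for the \emph{loops} case (Lemma \ref{lem: trace space for loops}, via the decomposition imported from \cite{AloBanBer21universality}), transplanted to mandarins. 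Your preliminary steps are all correct: $\sigma$ is indeed the pullback of $R$ on traces, the $\pm1$ eigenspace splitting gives the direct sum, $J$ commutes with $\diag(\bz,\bz)$ and with $S$ (for the mandarin $S=\left(\begin{smallmatrix}0&\sigma^{(v)}\\ \sigma^{(v)}&0\end{smallmatrix}\right)$ with equal vertex blocks, so $JS=SJ=\diag(\sigma^{(v)},\sigma^{(v)})$), and $\sigma M=MJ$ follows. Your approach buys a cleaner conceptual picture and avoids the case analysis; the paper's buys the explicit formulas with no appeal to the combinatorial structure of $S$.

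The one genuine error is the disambiguation step. You claim that at $\bz=(1,\dots,1)$ the polynomial $P_{M,\mathrm{as}}$ does not vanish, but it does: every term of $P_{M,\mathrm{as}}(\bz)=\sum_{j}(z_{j}+1)\prod_{i\ne j}(z_{i}-1)$ contains at least one factor $(z_{i}-1)$ once $N\ge 2$, so $P_{M,\mathrm{as}}(1,\dots,1)=0$ (and likewise $P_{M,\mathrm{s}}(1,\dots,1)=0$). This is no accident: for the equilateral mandarin with $k\ell_{j}\in 2\pi\Z$ the eigenspace contains both the symmetric eigenfunction $\cos(kt_{j})$ on every edge and antisymmetric eigenfunctions built from $\sin(kt_{j})$ supported on pairs of edges, so $(1,\dots,1)\in\msing(\Gamma)$ and both factors must vanish there. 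Hence your test point cannot tell $P_{+}$ from $P_{-}$. The gap is localized and fixable: either compute $P_{+}$ directly --- on $V_{+}$ one has $U_{+}(\bz)\cong\diag(\bz)\sigma^{(v)}$ with $\sigma^{(v)}=\frac{2}{N}\mathbb{J}-\mathbb{I}$, and the matrix determinant lemma gives
\[
\det\bigl(\mathbb{I}_{N}-\diag(\bz)\sigma^{(v)}\bigr)=\prod_{j=1}^{N}(1+z_{j})\cdot\frac{1}{N}\sum_{j=1}^{N}\frac{1-z_{j}}{1+z_{j}}=-\frac{1}{N}P_{M,\mathrm{s}}(\bz),
\]
which pins down the matching without any test point --- or pick a point of $\mreg(\Gamma)$ carrying a symmetric eigenfunction with nonzero vertex values and verify $P_{M,\mathrm{as}}$ is nonzero there. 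Without such a repair, your argument only establishes $\{P_{+},P_{-}\}=\{cP_{M,\mathrm{s}},c'P_{M,\mathrm{as}}\}$ as an unordered pair, which is not yet the stated lemma.
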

For later use, we define the symmetric and anti-symmetric parts of the trace space.
\begin{defn}\label{def: mandarin TSs TSas}
	Given a mandarin graph $ \Gamma $, define
		\begin{align*}
		\TS_{\mathrm{s}}(\Gamma)&:=\set{(\bz,\xv)\in\TS(\Gamma)}{P_{M,\mathrm{s}}(\bz)=0,\quad\mbox{ and    }\xv\in \TS_{\bz,\mathrm{s}}(\Gamma)},\quad\mbox{and}\\
		\TS_{\mathrm{as}}(\Gamma)&:=\set{(\bz,\xv)\in\TS(\Gamma)}{P_{M,\mathrm{s}}(\bz)=0,\quad\mbox{ and    }\xv\in \TS_{\bz,\mathrm{as}}(\Gamma)}.
	\end{align*}
In other words, given an eigenpair $ (k^2,f) $ of $ (\Gamma,\lv) $,
\begin{enumerate}
	\item $ f $ is symmetric if and only $ (\exp(ik\lv),\tr_{k}(f))\in \TS_{\mathrm{s}}(\Gamma)$.
		\item $ f $ is anti-symmetric if and only $ (\exp(ik\lv),\tr_{k}(f))\in \TS_{\mathrm{as}}(\Gamma)$.
\end{enumerate}  
\end{defn}
Prior to proving Lemma \ref{lem: Mandarin trace space}, we first state the analogous result for graphs with loops. It is straight forward that if $ (k^2,f) $ is an eigenpair and $ f $ is supported on a loop $ e_{j} $, then $\tr_{k}(f) $ vanishes on all edges except for $ e_{j} $, and the vertex condition at the vertex of $ e_{j} $ is
\[A_{j}=C_{j}=0,\qquad B_{j}+D_{j}=0.\] 
It is not hard to see that the other direction holds too, i.e., $ f $ is supported on $ e_{j} $ if $\tr_{k}(f) $ has $ B_{j}=-D_{j} $ and vanish on all other entries.
\begin{lem}[Trace space symmetry for loops]\label{lem: trace space for loops}
	Let $ \Gamma $ be a graph with loops, let $ \EL $ be the set of loops, and consider a trace fiber $ \TS_{\bz}(\Gamma) $ for $ \bz\in\Sigma(\Gamma) $. For any loop $ e_{j}\in\EL $, define the anti-symmetric subspace
	\begin{equation*}
		\TS_{\bz,\mathrm{as},j}(\Gamma):=\set{\xv\in\TS_{\bz}(\Gamma)}{D_{j}=-B_{j},~~\mbox{and all other entries of   }\xv\mbox{  vanish}}.
	\end{equation*}
Define the symmetric (on all loops) subspace
	\begin{equation*}
		\TS_{\bz,\mathrm{sym}}(\Gamma):=\set{\xv\in\TS_{\bz}(\Gamma)}{(A_{j},B_{j})=(C_{j},D_{j})\quad\mbox{for every loop   }e_{j}\in\EL}.
	\end{equation*}
	Then, 
	\[\TS_{\bz}(\Gamma)=\TS_{\bz,\mathrm{sym}}(\Gamma)\bigoplus_{e_j\in\EL}\TS_{\bz,\mathrm{as},j}(\Gamma),\]
	with the rule of
	\begin{align*}
		\TS_{\bz,\mathrm{sym}}(\Gamma)\ne\{0\}&\iff P_{\Gammasym}(\bz)=0\\
		\TS_{\bz,\mathrm{as},j}(\Gamma)\ne\{0\}&\iff z_{j}=1.
	\end{align*} 	 
\end{lem}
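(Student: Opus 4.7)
The strategy is two-fold: construct the direct sum decomposition of $\TS_\bz(\Gamma)$ explicitly on each loop, and identify each summand with a factor of $P_\Gamma$ via a representation-theoretic block decomposition of $U(\bz)$ matched against Theorem~\ref{Thm: Sarnak-Kurasov}. Given $\xv\in\TS_\bz(\Gamma)$, continuity at the vertex $v$ of any loop $e_j$ forces $A_j=C_j$, so the loop component splits uniquely as
\[
(A_j,B_j,C_j,D_j)=\bigl(A_j,\tfrac{B_j+D_j}{2},A_j,\tfrac{B_j+D_j}{2}\bigr)+\bigl(0,\tfrac{B_j-D_j}{2},0,-\tfrac{B_j-D_j}{2}\bigr).
\]
I would define $\xv_{\mathrm{sym}}$ to agree with $\xv$ on non-loop edges and take the first summand on each loop, and $\xv_{\mathrm{as},j}$ to vanish on every edge other than $e_j$ and take the second summand on $e_j$. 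The trace-space conditions of Lemma~\ref{lem: trace space polynomial equation} split cleanly: at the loop vertex, continuity only couples $A_j,C_j$ to other edges and derivative balance only couples $B_j+D_j$, while the edge equations \eqref{eq: edge equation 1}--\eqref{eq: edge equation 2} on a loop split by sum and difference into the symmetric equation $A_j(1-z_j)+iB_j(1+z_j)=0$ and the anti-symmetric equation $\tfrac{B_j-D_j}{2}(1-z_j)=0$. The anti-symmetric equation immediately gives $\TS_{\bz,\mathrm{as},j}(\Gamma)\neq\{0\}\iff z_j=1$, and shows that this subspace is one-dimensional when non-trivial.

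For the symmetric characterization, the group $G=(\Z/2)^{|\EL|}$ of loop reflections acts on the amplitude space $\C^{2N}$ of Corollary~\ref{cor: isomorhpism} by having $R_j$ swap $(a_j,b_j)$ on the loop $e_j$ and act trivially elsewhere. The isotypic decomposition is $\C^{2N}=V_{\mathrm{triv}}\oplus\bigoplus_{e_j\in\EL}V_{\chi_j}$, with $V_{\mathrm{triv}}=\{a_j=b_j\text{ on every loop}\}$ of dimension $2N-|\EL|$ and each $V_{\chi_j}$ one-dimensional, spanned by $(a_j,b_j)=(1,-1)$ on $e_j$ with zeros elsewhere. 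Since $S$ commutes with $G$ and $\diag(\bz,\bz)$ is $G$-invariant, $U(\bz)$ preserves this decomposition, giving
\[
P_\Gamma(\bz)=\det(\mathbb{I}-U|_{V_{\mathrm{triv}}}(\bz))\prod_{e_j\in\EL}(1-U|_{V_{\chi_j}}(\bz));
\]
each $U|_{V_{\chi_j}}(\bz)$ equals $s_jz_j$ for some scalar $s_j$, since its image lies in $V_{\chi_j}$ which is supported only on $e_j$. Matching with Theorem~\ref{Thm: Sarnak-Kurasov} and the irreducibility of $P_{\Gammasym}$ forces $s_j=1$ and $\det(\mathbb{I}-U|_{V_{\mathrm{triv}}}(\bz))=cP_{\Gammasym}(\bz)$ up to a non-zero constant. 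Under the isomorphism $M$ of Corollary~\ref{cor: isomorhpism}, the decomposition of $\ker(\mathbb{I}_{2N}-U(\bz))$ into $V_{\mathrm{triv}}$ and $V_{\chi_j}$ components recovers the claimed direct sum, using \eqref{eq:amplitudes to trace 1} to verify that $a_j=b_j$ gives $(A_j,B_j)=(C_j,D_j)$, and that $(a_j,b_j)=(1,-1)$ with zeros elsewhere maps to a vector with $(A_j,B_j)=-(C_j,D_j)$ and vanishing on all other edges. Since $U|_{V_{\mathrm{triv}}}(\bz)$ is unitary on $\T^N$, geometric and algebraic multiplicities at eigenvalue $1$ agree, giving $\TS_{\bz,\mathrm{sym}}(\Gamma)\neq\{0\}\iff P_{\Gammasym}(\bz)=0$.

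The main obstacle will be justifying $s_j=1$ cleanly. Direct computation of the action of $S$ on the $V_{\chi_j}$-generator requires unpacking the standard vertex scattering matrix at the loop vertex, which is elementary but tedious. A cleaner alternative is to invoke unique factorization in $\C[z_1,\ldots,z_N]$: since $1-U|_{V_{\chi_j}}(\bz)=1-s_jz_j$ is linear in $z_j$ alone, it must coincide up to a unit with one of the $(1-z_j)$ factors of $P_\Gamma$ provided by Theorem~\ref{Thm: Sarnak-Kurasov}, forcing $s_j=1$ without further computation.
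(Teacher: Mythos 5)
Your proposal is correct and follows the same broad strategy as the paper's proof (block-diagonalizing $U(\bz)$ with respect to the loop-reflection symmetry group and matching the resulting factorization of $P_\Gamma$ against Theorem~\ref{Thm: Sarnak-Kurasov}), but you execute several steps differently, each in a way that is worth noting.

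First, you build the direct sum decomposition of $\TS_\bz(\Gamma)$ explicitly on the trace coordinates, defining $\xv_{\mathrm{sym}}$ and $\xv_{\mathrm{as},j}$ by hand and verifying they each satisfy the vertex and edge equations of Lemma~\ref{lem: trace space polynomial equation}. The paper instead obtains the decomposition as the image under $\tr_k$ of an eigenbasis organized by symmetry type, which is shorter but less elementary; your check that the derivative-balance at the loop vertex only ever sees $B_j+D_j$ (so the symmetric summand preserves it) and that the anti-symmetric summand contributes zero is the right thing to verify, and it holds. Second, you re-derive the $U(\bz)$-invariant decomposition $\C^{2N}=V_{\mathrm{triv}}\oplus\bigoplus_j V_{\chi_j}$ from the $(\Z/2)^{|\EL|}$-action swapping $(a_j,b_j)$, rather than citing \cite{AloBanBer21universality} as the paper does; this requires observing that $S$ commutes with the loop reflections (true because the standard vertex scattering coefficients depend only on degree) and that $\diag(\bz,\bz)$ is $G$-equivariant — both fine. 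Third, and most distinctively, your argument that $s_j=1$ via unique factorization in $\C[z_1,\ldots,z_N]$ is a genuinely clean alternative to the paper's citation of the explicit eigenvector relation $U(\bz)\textbf{a}_{e_j}=z_j\textbf{a}_{e_j}$: since $1-s_jz_j$ is a linear factor of $P_\Gamma$ in $z_j$ alone, with $s_j\ne0$ because $S$ is orthogonal, and since Theorem~\ref{Thm: Sarnak-Kurasov} exhibits the complete factorization into $P_{\Gammasym}$ (which has total degree $\geq 2$ under Assumption~\ref{ass: assumptions}) and the linear factors $(1-z_i)$, matching constant terms forces $s_j=1$. Finally, your final step — invoking unitarity of $U|_{V_{\mathrm{triv}}}$ on $\T^N$ to equate geometric and algebraic multiplicity at eigenvalue $1$ — replaces the paper's dimension-count comparison between $\dim\ker(\mathbb{I}_{2N}-U(\bz))$ and the decomposed fiber; both are correct, yours is arguably more direct. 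One small point you should make explicit in a final writeup: the map from $V_{\chi_j}\cap\ker(\mathbb{I}_{2N}-U(\bz))$ to $\TS_{\bz,\mathrm{as},j}(\Gamma)$ lands in the anti-symmetric trace subspace precisely because the kernel condition already forces $z_j=1$, which via~\eqref{eq:amplitudes to trace 1} makes $A_j=C_j=z_j^{-1}-1=0$; without that the image would have $A_j\ne 0$ and not satisfy the definition of $\TS_{\bz,\mathrm{as},j}(\Gamma)$.
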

For later use, we define the symmetric and anti-symmetric parts of the trace space.
\begin{defn}\label{def: TSs TSas for loops}
	Given a graph $ \Gamma $ with loops $ \EL $, define
	\begin{align*}
		\TS_{\mathrm{sym}}(\Gamma)&:=\set{(\bz,\xv)\in\TS(\Gamma)}{P_{\Gammasym}(\bz)=0,\quad\mbox{ and    }\xv\in \TS_{\bz,\mathrm{sym}}(\Gamma)},\quad\mbox{and}\\
		\TS_{\mathrm{as},j}(\Gamma)&:=\set{(\bz,\xv)\in\TS(\Gamma)}{z_{j}=1,\quad\mbox{ and    }\xv\in \TS_{\bz,\mathrm{as},j}(\Gamma)}.
	\end{align*}
In particular, given an eigenpair $ (k^2,f) $ of $ (\Gamma,\lv) $, $ f $ is supported on a loop $ e_{j} $ if and only if $ (\exp(ik\lv),\tr_{k}(f))\in \TS_{\mathrm{as},j}(\Gamma)$.
\end{defn}
We may now prove these lemmas. 
\begin{proof}[Proof of Lemma \ref{lem: Mandarin trace space}]
	Let $ \Gamma $ be a mandarin graph, let $ \bz\in\Sigma(\Gamma) $ and let $ \lv\in\opcl{0,2\pi}^{N} $ such that $ \bz=\exp(ik\lv) $ with $ k=1 $. It is a standard argument that given an isometry $ R:(\Gamma,\lv)\to(\Gamma,\lv) $, such that $ R^{2} $ is the identity, any eigenspace has a basis of eigenfunctions that are either symmetric or anti-symmetric. See \cite{BanBerWey_jmp15} for example. Let $ \eig(\Gamma,\lv,k)_{\mathrm{s}} $ be the span of the symmetric basis eigenfunctions and $ \eig(\Gamma,\lv,k)_{\mathrm{as}} $ of the anti-symmetric basis eigenfunctions. Then
	\[\eig(\Gamma,\lv,k)=\eig(\Gamma,\lv,k)_{\mathrm{s}}\oplus\eig(\Gamma,\lv,k)_{\mathrm{as}}.\]
	Acting with $ \tr_{k} $ on this equation, using $ \bz=\exp(ik\lv) $, we get
	\[\TS_{\bz}(\Gamma)=\tr_{k}(\eig(\Gamma,\lv,k)_{\mathrm{s}})\oplus\tr_{k}(\eig(\Gamma,\lv,k)_{\mathrm{as}})=\TS_{\bz,\mathrm{s}}(\Gamma)\oplus\TS_{\bz,\mathrm{as}}(\Gamma).\]   
	Now let $ \xv\in\TS_{\bz,\mathrm{s}}(\Gamma) $ and we may assume that $ \xv $ is real. To be in $ \TS_{\bz,\mathrm{s}}(\Gamma) $, $ \xv $ must satisfy the vertex and edge conditions in \ref{lem: trace space polynomial equation} and to have $ (A_{j},B_{j})=(C_{j},D_{j}) $ on every edge $ e_{j} $. This can be reduced to the following conditions: 
	\begin{align}\label{eq: A equal Aj}
		A_{1}=A_{2}=\ldots=A_{N}& =:A,\\
		B_{1}+B_{2}+\ldots+B_{N}& =0,\label{eq: sum of B}
	\end{align}
and for every $ e_{j} $,
\begin{equation}\label{eq: zj in Aj and Bj}
	z_{j}=\frac{A+iB_{j}}{A-iB_{j}},\quad\mbox{or}\quad \|(A,B_{j}\|=0.
\end{equation}
We now consider two cases.\\

The first case is when $ A\ne0 $, in which case $ z_{j}\ne -1 $ for all $ j $, by \eqref{eq: zj in Aj and Bj}. Inverting the Möbius transformation gives $ B_{j}=-iA\frac{z_{j}-1}{z_{j}+1} $, and \eqref{eq: sum of B} leads to
\[\sum_{j=1}^{N}\frac{z_{j}-1}{z_{j}+1}=0, \quad\mbox{and therefore}\quad P_{M,\mathrm{s}}(\bz)=\left(\sum_{j=1}^{N}\frac{z_{j}-1}{z_{j}+1}\right)\prod_{j=1}^{N}(z_{j}+1)=0.\]
For the other direction, assume that $ P_{M,\mathrm{s}}(\bz)=0 $ with $ z_{j}\ne -1 $ for all $ j $. If we set $ \xv $ to have $ A=1 $ and $ B_{j}=-i\frac{z_{j}-1}{z_{j}+1} $ for all $ e_{j} $, then $ \xv\in \TS_{\bz,\mathrm{s}} $.\\

The second case is when $ A=0 $. If $ A=0 $, for every $ j $ either $ z_{j}=-1 $ or $ B_{j}=0 0 $, by \eqref{eq: zj in Aj and Bj}. The sum in  \ref{eq: sum of B} requires that there are at least two non zero $ B_{j} $'s, and therefore two coordinates satisfying $ z_{j}=-1 $. Hence, $ P_{M,\mathrm{s}}(\bz)=0 $. For the other direction, assume that $ P_{M,\mathrm{s}}(\bz)=0 $ and that $ z_{j}=-1 $ for some fixed $ e_{j} $ (as the case where all $ z_{j}\ne-1 $ was treated already). Then the equation
\[P_{M,\mathrm{s}}(\bz)=(z_{j}-1)\prod_{j'\ne j}(z_{j'}+1)=0,\]
tells us that there must be another $ j'\ne j $ with $ z_{j'}=-1 $. Let $ \xv $ be a trace vector with
\[B_{j}=D_{j}=-B_{j'}=-D_{j'},\] and zero in all other entries, then $ \xv\in \TS_{\bz,\mathrm{s}} $.\\

We conclude that 
\[\TS_{\bz,\mathrm{s}}\ne\{0\}\iff P_{M,\mathrm{s}}(\bz).\]
The proof of
\[\TS_{\bz,\mathrm{as}}\ne\{0\}\iff P_{M,\mathrm{as}}(\bz),\]
follows the same steps.  
\end{proof}
The proof of Lemma \ref{lem: trace space for loops} is similar to the proof of Lemma \ref{lem: Mandarin trace space}. 
\begin{proof}[Proof of Lemma \ref{lem: trace space for loops}]
	Let $ \Gamma $ be a graph with loops, let $ \EL $ be the set of loops and for every $ e_{j}\in\EL $ let $ R_{j} $ be the reflection of $ e_{j} $ that acts by identity on all other edges. As already mentioned, the group of reflection symmetries of $ \Gamma $, say $ G $, is the group generated by $ R_{j} $ for all $ e_{j}\in\EL $. Notice that this is an abelian group since the $ R_{j} $ generators commute, and that any element of this group satisfies $ R^{2} $ equals identity. The same argument as before, namely that these are isometries that preserve the vertex conditions, tells us that any eigenspace $ \eig(\Gamma,\lv,k) $ has a basis of eigenfunctions that satisfy $ f\circ R_{j}=\pm f $ for every loop $ e_{j} $. Notice that there are only two cases of an eigenfunction $ f $ as above: 
	\begin{enumerate}
		\item Either $ f\circ R_{j}=f $ for every loop $ e_{j} $, in which case we call $ f $ \emph{symmetric}. Or,
		\item $ f $ is supported on some loop $ e_{j} $, in which case $ f\circ R_{j}=-f $.
	\end{enumerate} 
To see that these are the only two cases, assume that $ f $ is not supported on a single loop, but $ f\circ R_{j}=-f $ for some loop $ e_{j} $. Then there exists an edge $ e_{j'} $, with $ j'\ne j $ such that $ f|_{e_{j'}}\not\equiv 0 $. But $ R_{j} $ acts as identity on $ e_{j'} $ and so $ f\circ R_{j}=-f $ implies that $ f|_{e_{j'}}=-f|_{e_{j'}}$, contradicting the assumption of t $ f|_{e_{j'}}\not\equiv 0 $. Hence, the above two cases are indeed the only two cases. \\
As in the proof of Lemma \ref{lem: Mandarin trace space}, the basis of eigenfunction with certain types of symmetry provides the needed decomposition of any eigenspace, which results in
\begin{equation}\label{eq: loop partitoin}
	\TS_{\bz}(\Gamma)=\TS_{\bz,\mathrm{sym}}(\Gamma)\bigoplus_{e_j\in\EL}\TS_{\bz,\mathrm{as},j}(\Gamma)
\end{equation}
	Notice that by definition, any non-trivial $ \TS_{\bz,\mathrm{as},j}(\Gamma) $ is one dimensional. Assume that there is a non-zero trace vector $\xv\in \TS_{\bz,\mathrm{as},j}(\Gamma) $ for some loop $ e_{j}  $, namely $ B_{j}=-D_{j}\ne0 $ and the rest of the entries are zero. The edge equation in Lemma \ref{lem: trace space polynomial equation} gives,
	\[B_{j}=z_{j}B_{j}\Rightarrow z_{j}=1.\]
On the other hand, given any $ \bz\in\T^{N} $ with $ z_{j}=1 $ for some loop $ e_{j} $, the trace vector $ \xv $ with $ B_{j}=-D_{j}=1 $ and zero in all other entries, satisfies $\xv\in \TS_{\bz,\mathrm{as},j}(\Gamma) $. Therefore,
\begin{equation}\label{eq: loop condition}
	\TS_{\bz,\mathrm{as},j}(\Gamma)\ne\{0\}\iff z_{j}=1.
\end{equation}
We are left with showing that
\[\TS_{\bz,\mathrm{sym}}(\Gamma)\ne\{0\}\iff P_{\Gammasym}(\bz)=0.\]
To do so, we use a fixed orthogonal decomposition of $ \C^{2N} $,
\begin{equation}\label{eq: C2N decomposition}
	\C^{2N}=V_{\mathrm{sym}}\oplus V_{\mathrm{as}},
\end{equation}
that was constructed in \cite[Definition 5.11]{AloBanBer21universality}, where the following properties we shown:
\begin{enumerate}
	\item The $ \bz $ dependent matrix $ U(\bz) $ is block diagonal in the fixed decomposition \eqref{eq: C2N decomposition},
		\[U(\bz)=U_{\mathrm{sym}}(\bz)\oplus U_{\mathrm{as}}(\bz),\]
	for any $ \bz\in\T^{N} $. 
	\item The space $ V_{\mathrm{as}} $ is $ |\EL| $ dimensional, with a (fixed) basis of vectors $ \textbf{a}_{e_{j}} $ for $ e_{j}\in\EL $,  that satisfy
	\[U(\bz)\textbf{a}_{e_{j}}=z_{j}\textbf{a}_{e_{j}},\]
	for any $ \bz\in\T^{N} $.
\end{enumerate}
We may conclude that
\begin{align}
	P_{\Gamma}(\bz):= & \det(\mathbb{I}_{2N}-U_{\bz})\\
	= & \det(\mathbb{I}_{|\EL|}-U_{\mathrm{sym}}(\bz))\det(\mathbb{I}_{2N-|\EL|}-U_{\mathrm{as}}(\bz))\\
	=&\left(\prod_{e_{j}\in\EL}(1-z_{j})\right)\det(\mathbb{I}_{2N-|\EL|}-U_{\mathrm{as}}(\bz)),
\end{align}
where the second line follows from the decomposition in (1) and in the third line we replace the determinant $ \det(\mathbb{I}_{|\EL|}-U_{\mathrm{sym}}(\bz)) $ with the product of eigenvalues given by (2). By comparing this decomposition of $ P_{\Gamma}(\bz) $ with Theorem \ref{Thm: Sarnak-Kurasov} we conclude that 
\[P_{\Gammasym}(\bz)=\det(\mathbb{I}_{2N-|\EL|}-U_{\mathrm{as}}(\bz)),\]
and in particular,
\begin{equation}\label{eq: something}
	\dim(\ker(\mathbb{I}_{2N}-U(\bz)))>|\set{e_{j}\in\EL}{z_{j}=1}|\iff P_{\Gammasym}(\bz)=0.
\end{equation} 
We may now recall that 
\[\dim(\ker(\mathbb{I}_{2N}-U(\bz)))=\dim(\TS_{\bz}(\Gamma))=\dim(\TS_{\bz,\mathrm{sym}}(\Gamma))+\sum_{e_{j}\in\EL}\dim(\TS_{\bz,\mathrm{as},j}(\Gamma)),\]
by Corollary \ref{cor: isomorhpism} and the decomposition \eqref{eq: loop partitoin}. According to \eqref{eq: loop condition}, $ \dim(\TS_{\bz,\mathrm{as},j}(\Gamma)) $ equals one when $ z_{j}=1 $ and is zero otherwise. We conclude that 
\[\dim(\ker(\mathbb{I}_{2N}-U(\bz)))>|\set{e_{j}\in\EL}{z_{j}=1}|\iff \TS_{\bz,\mathrm{sym}}(\Gamma)\ne\{0\},\]
and we are done by applying \eqref{eq: something}.
\end{proof}

\begin{lem}[Mandarin and flower]\label{lem: mandarin and flower}
	If $ \Gamma $ is a flower graph and $ \Gamma' $ is a mandarin graph with the same number of edges, then
	\[\TS_{\mathrm{sym}}(\Gamma)=\TS_{\mathrm{s}}(\Gamma').\]
	In particular, $ P_{\Gammasym}=c P_{M,\mathrm{s}} $ for some constant $ c $.
\end{lem}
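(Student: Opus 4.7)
The plan is to show the set equality $\TS_{\mathrm{sym}}(\Gamma) = \TS_{\mathrm{s}}(\Gamma')$ directly, by computing the defining equations of each side from Lemma \ref{lem: trace space polynomial equation} and the appropriate symmetry constraints, and observing they coincide. Then the polynomial identity will follow by projecting to the $\bz$ variables and invoking irreducibility.

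First, I would write out the vertex conditions explicitly. For the flower $\Gamma$ with its unique vertex $v$ and edges $e_1,\ldots,e_N$ all loops, continuity forces $A_j = C_j = f(v)$, the same scalar for every edge, and the derivative balance gives $\sum_{j=1}^{N}(B_j + D_j)=0$. For the mandarin $\Gamma'$ with vertices $u,v$ and edges all from $u$ to $v$ (oriented so $t_j=0$ at $u$), continuity gives $A_j \equiv f(u)$ and $C_j \equiv f(v)$, and the two balance equations give $\sum_j B_j = 0 = \sum_j D_j$. Now impose the symmetry constraint $(A_j,B_j)=(C_j,D_j)$ (the defining condition of both $\TS_{\bz,\mathrm{sym}}(\Gamma)$ and $\TS_{\bz,\mathrm{s}}(\Gamma')$ once we observe that for the flower every edge is a loop). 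For the mandarin this immediately collapses $f(u)=f(v)$ and reduces the two balance relations to the single $\sum_j B_j = 0$. For the flower, $B_j=D_j$ together with $\sum_j(B_j+D_j)=0$ also collapses to $\sum_j B_j=0$. Both sides end up with the identical list of relations on $\xv\in\C^{4N}$:
\[
A_j=C_j=A \text{ (common)},\quad B_j=D_j,\quad \sum_{j=1}^{N}B_j=0.
\]

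Second, I would note that the edge conditions from Lemma \ref{lem: trace space polynomial equation} are \emph{graph-independent}: they are the same two multilinear relations \eqref{eq: edge equation 1}--\eqref{eq: edge equation 2} in every case. Under $A_j=C_j$ and $B_j=D_j$ both relations collapse to the single equation $A(1-z_j)+iB_j(1+z_j)=0$ on each edge. Hence the conditions defining $\TS_{\mathrm{sym}}(\Gamma)$ inside $\T^N\times\C^{4N}$ are identical to those defining $\TS_{\mathrm{s}}(\Gamma')$, so the two sets coincide.

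For the polynomial part, project the equality $\TS_{\mathrm{sym}}(\Gamma)=\TS_{\mathrm{s}}(\Gamma')$ onto its $\bz$-coordinates. By Lemmas \ref{lem: Mandarin trace space} and \ref{lem: trace space for loops}, the projections are exactly $Z(P_{\Gammasym})\cap\T^N$ and $Z(P_{M,\mathrm{s}})\cap\T^N$ respectively (with the implicit understanding that the $\xv$-fiber is forced to be nontrivial on both sides). Thus
\[
Z(P_{\Gammasym})\cap\T^N \;=\; Z(P_{M,\mathrm{s}})\cap\T^N.
\]
By Theorem \ref{Thm: Sarnak-Kurasov}, both $P_{\Gammasym}$ and $P_{M,\mathrm{s}}$ are irreducible. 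Applying Lemma \ref{lem: zariski} with $p=P_{\Gammasym}$ and $q=P_{M,\mathrm{s}}$: option (1) is ruled out because the common zero set on $\T^N$ is all of $Z(P_{\Gammasym})\cap\T^N$, which has real dimension $N-1$, not positive codimension in $\Sigma$-dimension. Therefore $P_{\Gammasym}$ divides $P_{M,\mathrm{s}}$, and by irreducibility of the latter the two polynomials agree up to a constant, proving $P_{\Gammasym}=c\,P_{M,\mathrm{s}}$.

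The routine part is unpacking the vertex/edge conditions and checking they match; there is no genuine obstacle. The only point requiring some care is handling the common value $A=f(u)=f(v)$ correctly so that the symmetric mandarin conditions are seen to be exactly the flower conditions (rather than a strict subset), and ensuring the fiber-nontriviality convention in Definitions \ref{def: mandarin TSs TSas} and \ref{def: TSs TSas for loops} matches on both sides when projecting; both are immediate from the preceding lemmas.
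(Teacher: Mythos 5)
Your proof is correct and follows essentially the same route as the paper: reduce both $\TS_{\mathrm{sym}}(\Gamma)$ and $\TS_{\mathrm{s}}(\Gamma')$ to the same set of equations ($A_j=C_j=A$ common, $B_j=D_j$, $\sum_j B_j=0$, plus the collapsed edge relation), conclude the two trace-space pieces are equal as sets, project to $\bz$ using Lemmas \ref{lem: Mandarin trace space} and \ref{lem: trace space for loops}, and deduce the polynomial identity from Lemma \ref{lem: zariski} together with irreducibility. Your version is a touch more explicit than the paper's in writing out the flower and mandarin vertex conditions before imposing symmetry, which cleans up a slightly garbled parenthetical in the paper's proof, but the underlying argument is the same.
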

\begin{proof}
	A point $ (\bz,\xv)\in \TS_{\mathrm{s}}(\Gamma') $ needs to satisfy $ (A_{j},B_{j}=C_{j},D_{J}) $ for every $ e_{j} $ (since all edges are loops) and also the equations in Lemma \ref{lem: trace space polynomial equation}. Assuming that $ \xv $ is real\footnote{We can assume $ \xv $ is real because we know that every fiber $ \TS_{\bz}(\Gamma') $ has a real basis.}, it can be reduced to the following conditions: 
	\begin{align}\label{eq: A equal Aj}
		A_{1}=A_{2}=\ldots=A_{N}& =:A,\\
		2B_{1}+2B_{2}+\ldots+2B_{N}& =0,\label{eq: sum of B}
	\end{align}
	and for every $ e_{j} $,
	\begin{equation}\label{eq: zj in Aj and Bj}
		z_{j}=\frac{A+iB_{j}}{A-iB_{j}},\quad\mbox{or}\quad \|(A,B_{j}\|=0.
	\end{equation}
 These are exactly the same equations as \eqref{eq: A equal Aj}, \eqref{eq: sum of B} and \eqref{eq: zj in Aj and Bj}, which are the defining equations of $ \TS_{\mathrm{sym}}(\Gamma) $. We conclude that 
	\[\TS_{\mathrm{sym}}(\Gamma)=\TS_{\mathrm{s}}(\Gamma').\]
	According to Lemma \ref{lem: Mandarin trace space} and Lemma \ref{lem: trace space for loops} it means that for any $ \bz\in\T^{N} $,
	\[P_{\Gammasym}(\bz)=0\iff P_{M,\mathrm{s}}(\bz)=0.\]
	We may conclude from Lemma \ref{lem: zariski} that $ P_{\Gammasym}(\bz) $ and $ P_{M,\mathrm{s}} $ share the same zero set in $ \C^{N} $ and are therefore equal up to a constant factor.  
\end{proof}
	
	\section{Genericity theorems}
In this section we prove the main results of this paper. The proofs share a similar structure, in which we show that a certain property $ \prop $ is generic, by showing that any eigenpair $(k^2,f)  $ of $ (\Gamma,\lv) $ that fails to satisfy $ \prop $ must have $ \exp(ik\lv)\in B $ for some ``small" subvariety $ B\subset \Sigma(\Gamma) $. A common step in all proofs is the claim that a property whose negation is encapsulated by a small subvariety as above is strongly and ergodically generic. We prove this genericity criteria in the next Lemma. We remind the reader that we call $ V\subset\C^{N} $ an \emph{algebraic set} or a \emph{variety} if it is a finite union and intersection of zero sets of polynomials. We say that $ B\subset \Sigma(\Gamma) $ is a subvariety of $ \Sigma(\Gamma) $ if $ B=V\cap\Sigma(\Gamma) $ for some variety $ V $. A subvariety $ B $ has a positive co-dimension in $ \Sigma(\Gamma) $ if $ \dim(B)\le N-2 $, since $ \Sigma(\Gamma) $ has real dimension $ N-1 $ by Lemma \ref{lem: sec man in U}. 
	\begin{lem}[The genericity criteria]\label{lem: main genericty lem}
		Let $ \Gamma$ be a graph satisfying Assumption \ref{ass: assumptions}, and let $ B\subset\Sigma({\Gamma}) $ be a subvariety of positive co-dimension in $ \Sigma({\Gamma}) $. Then,
		\begin{enumerate}
			\item The set of ``good" lengths 
			\begin{equation}\label{eq: G(B)}
				G(B)=\set{\lv\in\R_{+}^E}{\forall k>0,\quad\exp(ik\lv)\notin B},
			\end{equation}
					is strongly generic.
			\item For any $ \Q $-independent $ \lv $, 
			\[\lim_{T\to\infty}\frac{|\set{k\in\spec (\Gamma,\lv)\cap [0,T]}{\exp(ik\lv)\in B}|}{|\spec(\Gamma,\lv)\cap[0,T]|}=0.\]
		\end{enumerate}
	\end{lem}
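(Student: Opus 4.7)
The plan is to handle the two parts separately: part (1) by exhausting $G(B)^c$ with a countable family of closed subanalytic sets of positive codimension, and part (2) by appealing to the equidistribution of the spectrum on $\Sigma(\Gamma)$ cited in the introduction.

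For part (1), for each $n\in\N$ I would consider the compact set
\[S_n:=\set{(\lv,k)\in[1/n,n]^N\times[1/n,n]}{\exp(ik\lv)\in B}\]
and its projection $A_n:=\pi_{\lv}(S_n)$. Since $B\subset \T^N$ is, by assumption, locally the common zero set of finitely many polynomials in $\bz$ intersected with $\Sigma(\Gamma)=Z(P_\Gamma)\cap\T^N$, and since $(\lv,k)\mapsto\exp(ik\lv)$ is real analytic, the set $S_n$ is compact semianalytic; hence $A_n$ is closed subanalytic in $\R_+^N$. Since the lemma's condition $\forall k>0$ can be captured by exhausting $(0,\infty)$ with the intervals $[1/n,n]$ and restricting $\lv$ to the boxes $[1/n,n]^N$, we obtain $G(B)^c=\bigcup_{n\in\N}A_n$.

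To see each $A_n$ has positive codimension, I would use the dimension count. For any fixed $k>0$, the map $\Phi_k:\lv\mapsto\exp(ik\lv)$ is a real analytic local diffeomorphism from $\R_+^N$ onto $\T^N$, since its Jacobian is the nonsingular diagonal matrix $\diag(ik\,e^{ik\ell_1},\ldots,ik\,e^{ik\ell_N})$. Because $B$ has real dimension at most $N-2$ (being of positive codimension in the $(N-1)$-dimensional $\Sigma(\Gamma)$, by Lemma \ref{lem: sec man in U}), each slice $\Phi_k^{-1}(B)$ has real dimension $\le N-2$. Fibering $S_n$ over the one-parameter interval in $k$ adds at most one real dimension, so $\dim S_n\le N-1$, and projections cannot increase subanalytic dimension, yielding $\dim A_n\le N-1$, i.e.\ positive codimension in $\R_+^N$.

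For part (2), I would invoke the equidistribution theorem of Barra--Gaspard, Berkolaiko--Winn, and Colin de Verdière cited in the introduction: for any $\Q$-independent $\lv$, the sequence $\{\exp(ik\lv)\}_{k\in\spec(\Gamma,\lv)}$ equidistributes on $\Sigma(\Gamma)$ with respect to a Borel probability measure $\mu_\Sigma$ absolutely continuous with respect to the $(N-1)$-dimensional Hausdorff measure on $\mreg(\Gamma)$. Since $B\cap\mreg(\Gamma)$ has real dimension $\le N-2$, and $\msing(\Gamma)$ is also of dimension $\le N-2$ by Corollary \ref{cor: singular set}, we have $\mu_\Sigma(B)=0$. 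The density statement then follows by the standard sandwich: approximate the closed set $B$ from above by continuous bumps $\chi_\varepsilon$ supported in an $\varepsilon$-neighborhood of $B$, apply equidistribution to each $\chi_\varepsilon$, and let $\varepsilon\to 0$ using dominated convergence against $\mu_\Sigma$.

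The main obstacle, in my view, is being careful with the subanalytic presentation in part (1): the defining relations of $S_n$ involve $\exp(ik\ell_j)$, a transcendental (but real analytic) function on a compact box, and one must verify that $A_n$ is genuinely subanalytic as a subset of $\R_+^N$, not merely of the box. This is resolved by Definition \ref{def: subanalytic} applied with the auxiliary compact parameter $k\in[1/n,n]$. Everything else -- the fiberwise dimension bound and the passage from equidistribution to densities of indicator functions -- is standard once $\mu_\Sigma(B)=0$ is established.
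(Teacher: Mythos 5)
Your proposal is correct and follows essentially the same strategy as the paper's proof, organizing the dimension count slightly differently but to the same effect. For part (1), you fiber $S_n$ over the compact $k$-interval using the fact that for each fixed $k$ the slice $\Phi_k^{-1}(B)$ has real dimension at most $N-2$; the paper instead passes to $\tilde B=\{\xv:\exp(i\xv)\in B\}$ and bounds $\dim\{(k,\lv):k\lv\in\tilde B\}\le\dim\tilde B+1$ via the diffeomorphism $(k,\lv)\mapsto(k,k\lv)$ — the two counts are isomorphic, and both exhaust $\R_+^N$ with a countable family of compact semianalytic preimages whose projections are closed subanalytic of dimension $\le N-1$. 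One small difference: you bound $\lv$ to the box $[1/n,n]^N$ as well as $k$, so your $A_n$ is compact; the paper only bounds $k$ and obtains $\mathcal B_n$ closed but non-compact — both are acceptable for verifying the local definition of a subanalytic set, and both give $G(B)^c=\bigcup_n A_n$. You also skip the paper's explicit reduction from union-intersection to pure intersection of zero sets, but since you work directly with $S_n$ as a compact semianalytic set this reduction is not needed. For part (2), your sandwich argument with continuous bumps is the standard derivation of equidistribution for indicator functions of sets with measure-zero boundary; the paper short-circuits this by citing the version of the Barra--Gaspard ergodic theorem (as stated in Berkolaiko--Winn and Colin de Verdi\`ere) that already applies to closed sets $A$ with $\mu_{\lv}(\partial A)=0$, which is exactly your situation since $\tilde B$ is closed of dimension $\le N-2$ and $\mu_{\lv}$ is absolutely continuous with respect to $(N-1)$-dimensional volume. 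The content is the same.
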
 
	\begin{proof}
		By definition, to show that $ G(B) $ is strongly generic we need to show that its complement $ G(B)^{c}=\R_{+}^{N}\setminus G(B)$ is a countable union of sets, $ \mathcal{B}_{n} $ for $ n\in\N $, such that each $ \mathcal{B}_{n} $ is a closed subanalytic set (see Definition \ref{def: subanalytic}) of dimension at most $ N-1 $. We may deduce from \eqref{eq: G(B)} that when a subvariety is given by a union $ B=B_{1}\cup B_{2} $, the complement of $ G(B)=G(B_{1}\cup B_{2}) $ is given by a union,
		\[G(B_{1}\cup B_{2})^{c}=G(B_{1})^{c}\cup G(B_{2})^{c},\]
		and so $ G(B_{1}\cup B_{2}) $ is strongly generic if both $ G(B_{1}) $ and $ G(B_{2}) $ are strongly generic. It is therefore enough to prove that $ G(B) $ is strongly generic when the subvariety $ B$ is defined in terms of a finite intersection of zero sets of polynomial, rather than union and intersection. Assume that 
				\[B=\set{\bz\in\T^{N}}{p_{j}(\bz)=0,\quad j=0,1,2,\ldots,m },\]
where $ p_{1},p_{2},\ldots,p_{m} $ are polynomials. The real and imaginary parts of each polynomial $ p_{j} $ defines real analytic functions on $ \R\times\R^{N} $ by
\[f_{j,1}(k,\xv):=\Re[p_{j}(\exp{(ik\xv)})],\quad\mbox{and}\quad f_{j,2}(k,\xv):=\Im[p_{j}(\exp{(ik\xv)})].\] 
Given $ n\in\N $, define  
\begin{equation*}
	\mathcal{B}_{n}:=\set{\lv\in\R_{+}^N}{\exists k\in[\frac{1}{n},n]~~\mbox{ s.t.   }f_{j}(k,\lv)=0,\quad  s=1,2,\mbox{  and  }j=0,1,2,\ldots ,m},
\end{equation*}
which is a closed subanalytic set according to Definition \ref{def: subanalytic}. We write $ G(B)^{c} $ as
\begin{align*}
	G(B)^{c}:= & \set{\lv\in\R_{+}^N}{\exists k>0~~\mbox{ s.t.   }\exp(ik\lv)\in B}\\
	= & \set{\lv\in\R_{+}^N}{\exists k>0~~\mbox{ s.t.   }f_{j,s}(k,\lv)=0,\quad s=1,2,\mbox{  and  }j=0,1,2,\ldots ,m}\\
	= & \bigcup_{n\in\N}\mathcal{B}_{n}.
\end{align*}
To conclude that $ G(B) $ is strongly generic we need to show that $ \dim(\mathcal{B}_{n})\le N-1 $ for all $ n\in\N $. To this end, define the auxiliary sets 
		\[\tilde{B}:=\set{\xv\in\R^{N}}{\exp{(i\xv)}\in B},\quad C_{n}:=\set{(k,\lv)\in[\frac{1}{n},n]\times\R_{+}^{N}}{k\lv\in\tilde{B}}.\]
The exponent map $ e(\xv):=\exp{(i\xv)} $ is a local diffeomorhpism between $ \R^{N} $ and $ \T^{N} $, so \[\dim(\tilde{B})=\dim(B)\le N-2,\]
follows from $ e(\tilde{B})=B $ and the assumption that $ B $ has positive co-dimension in $ \Sigma(\Gamma) $. The dimension of $ C_{n} $ is bounded by
		\[\dim(C_{n})\le\dim(\tilde{B})+1\le N-1,\]
		which bounds the dimension of $ \mathcal{B}_{n} $ by 
		\[\dim(\mathcal{B}_{n})\le\dim(C_{n})\le N-1,\]
since $ \mathcal{B}_{n} $ is a projection of $ C_{n} $. This proves (1).\\
		
		To prove (2), consider the embedding of $ \Sigma(\Gamma) $ in the flat torus $ \R^{N}/2\pi\Z^{N} $,
		\[\mflat(\Gamma):=\set{\xv\in\R^{N}/2\pi\Z^{N}}{\exp(i\xv)\in\Sigma(\Gamma)}.\]
		This is an analytic variety, defined by $ P_{\Gamma}(\exp(i\xv))=0 $, that has dimension $ N-1 $ (as it is diffeomorhpic to $ \Sigma(\Gamma) $ ). We abuse notation and consider the periodic set $ \tilde{B} $ as a subset of $ \R^{N}/2\pi\Z^{N} $, and therefore a subset $ \tilde{B}\subset \mflat(\Gamma) $. Let $ \fr{k\lv} $ denote the reminder of $ k\lv $ modulo $ 2\pi $. In \cite{BarGas_jsp00} Barra and Gaspard introduced an $ \lv $ dependent Borel measure $ \mu_{\lv} $ on $ \mflat(\Gamma) $ which has the following ergodic property. For any $ \Q $-independent $ \lv $ and any subset $ A\subset \mflat(\Gamma)$, assuming its boundary satisfy $ \mu_{\lv}(\partial A)=0 $,
		\[\lim_{T\to\infty} \frac{|\set{k\in\spec(\Gamma,\lv)\cap[0,T]}{\fr{k\lv}\in A}|}{|\spec(\Gamma,\lv)\cap[0,T]|}=\frac{\mu_{\lv}(A)}{\mu_{\lv}(\Sigma(\Gamma))}.\] 
For a proof, see \cite[Proposition 4.4]{BerWin_tams10} or \cite[Lemma 3.2 ]{CdV_ahp15}. The measure $ \mu_{\lv} $ is absolutely continuous with respect to an $ (N-1) $-dimensional volume measure on $\mflat(\Gamma)  $ and therefore $ \mu_{\lv}(\tilde{B})=\mu_{\lv}(\partial\tilde{B})=0 $. Here we use the fact that $ \tilde{B} $ is closed and has $ \dim(\tilde{B})\le N-2 $. Applying the ergodic property to $ \tilde{B}\subset \mflat(\Gamma) $ gives
		\[\lim_{T\to\infty} \frac{|\set{k\in\spec(\Gamma,\lv)\cap[0,T]}{\fr{k\lv}\in \tilde{B}}|}{|\spec(\Gamma,\lv)\cap[0,T]|}=\frac{\mu_{\lv}(\tilde{B})}{\mu_{\lv}(\Sigma(\Gamma))}=0.\] 
	 
			\end{proof}
	At this point we get, as a corollary of Lemma \ref{lem: main genericty lem} and Corollary \ref{cor: singular set}, an independent proof for (a stronger version of) Friedlander's result on the simplicity of the spectrum. 
	\begin{cor}
		Let $ \Gamma $ be a graph satisfying Assumption \ref{ass: assumptions}. Then having simple eigenvalues is strongly and ergodically generic in $ \lv $.  
	\end{cor}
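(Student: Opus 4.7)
The plan is a direct two-step application: assemble the positive-codimension ``bad'' subvariety, then feed it into the genericity criteria of Lemma~\ref{lem: main genericty lem}. Specifically, I would take $B := \msing(\Gamma)\subset\Sigma(\Gamma)$. By Corollary~\ref{cor: singular set}, $B$ is a subvariety of $\Sigma(\Gamma)$ of positive codimension (real dimension at most $N-2$), so it is exactly the kind of set to which Lemma~\ref{lem: main genericty lem} applies.

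Next, I would translate the simplicity property into the language of Lemma~\ref{lem: main genericty lem}. By Lemma~\ref{lem: sec man in U}, a nonzero eigenvalue $k^2$ of $(\Gamma,\lv)$ is simple precisely when $\exp(ik\lv)\in\mreg(\Gamma)=\Sigma(\Gamma)\setminus\msing(\Gamma)$, and is multiple precisely when $\exp(ik\lv)\in\msing(\Gamma)=B$. The zero eigenvalue $k=0$ is automatically simple under Assumption~\ref{ass: assumptions} (connectedness), so it may be ignored.

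With this translation, part (1) of Lemma~\ref{lem: main genericty lem} says that the set
\[
G(B)=\set{\lv\in\R_{+}^{N}}{\forall k>0,\ \exp(ik\lv)\notin\msing(\Gamma)}
\]
is strongly generic in $\R_{+}^{N}$, and for every $\lv\in G(B)$, every nonzero eigenvalue of $(\Gamma,\lv)$ is simple. This establishes strong genericity. For ergodic genericity, part (2) of Lemma~\ref{lem: main genericty lem} applied to the same $B$ yields, for every $\Q$-independent $\lv$,
\[
\lim_{T\to\infty}\frac{|\set{k\in\spec(\Gamma,\lv)\cap[0,T]}{k^2\text{ is multiple}}|}{|\spec(\Gamma,\lv)\cap[0,T]|}=0,
\]
which, after taking complements inside $\spec(\Gamma,\lv)\cap[0,T]$, gives the required density-one statement for simple eigenvalues.

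There is essentially no obstacle here: all the work has been done in Corollary~\ref{cor: singular set} (dimension bound on $\msing(\Gamma)$, which in turn uses Theorem~\ref{Thm: Sarnak-Kurasov} via Lemma~\ref{lem: zariski}) and in Lemma~\ref{lem: main genericty lem} (the subanalytic and ergodic machinery). The only minor point to be careful about is to remark explicitly that the $k=0$ eigenpair is excluded by the convention adopted just before Definition~\ref{def: strong and ergodic genericity} (``every eigenpair'' ignores the constant eigenfunction), so that no special treatment of $\bz=(1,\ldots,1)$ is needed.
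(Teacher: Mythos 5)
Your proposal is correct and matches the paper's own proof exactly: the paper also sets $B=\msing(\Gamma)$, invokes Corollary~\ref{cor: singular set} for the codimension bound, uses Lemma~\ref{lem: sec man in U} to identify multiple eigenvalues with points of $\msing(\Gamma)$, and concludes via Lemma~\ref{lem: main genericty lem}. Your additional remark about excluding the $k=0$ eigenpair is a harmless clarification of the convention already adopted in the paper.
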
 
\begin{proof}
By Lemma \ref{lem: sec man in U}, $ k^2>0 $ is a multiple eigenvalue of $ (\Gamma,\lv) $ if and only if $ \exp(ik\lv)\in\msing(\Gamma) $. By Corollary \ref{cor: singular set}, $ \msing(\Gamma) $  is a subvariety of positive codimension in $ \Sigma(\Gamma) $, and the needed result follows by substituting $ B=\msing $ in Lemma \ref{lem: main genericty lem}.
\end{proof}
\begin{rem}
	 The above proof is independent of Friedlander's proof in \cite{Fri_ijm05}. The idea of an alternative proof for the generic simplicity which relies on the positive codimension of $ \msing(\Gamma) $ appeared in section 7 of \cite{CdV_ahp15}. 
\end{rem}

\subsection{Proofs of Theorems \ref{thm: polynomial vertex conditions} and \ref{thm: polynomial vertex conditions mandarin} - Genericity on the trace space}$ ~ $\\
Theorem \ref{thm: polynomial vertex conditions} and Theorem \ref{thm: polynomial vertex conditions mandarin} can be stated as one general theorem, using the results and definitions accumulated so far.
\begin{thm}\label{thm: unified}
	Let $ \Gamma $ be a graph satisfying Assumption \ref{ass: assumptions}. Let $ p=\PG $ if $ \PG $ is irreducible, otherwise, let $ p $ be an irreducible factor of $ \PG $. Let $ q(\bz,\xv) $ be a polynomial in $ (\bz,\xv) $ which is homogeneous in $ \xv $. If there exists a point $ (\bz,\xv)\in\TS(\Gamma) $ such that 
	\[\bz\in\mreg(\Gamma)\cap Z(p),\quad \xv\ne0,\quad \mbox{and}\quad q(\bz,\xv)\ne0.\]
	Then, the next two properties of eigenpairs $ (k^2,f) $ of $ (\Gamma,\lv) $ are strongly and ergodically generic in $ \lv $:
	\begin{enumerate}
		\item $ \exp(ik\lv)\in\mreg(\Gamma) $, and
		\item $ q(\exp(ik\lv),\tr_{k}(f))\ne0$ whenever $ p(\exp(ik\lv))=0 $.
	\end{enumerate}
\end{thm}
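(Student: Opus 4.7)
The plan is to deduce Theorem \ref{thm: unified} from the two key tools already in hand: the genericity criterion of Lemma \ref{lem: main genericty lem} and the algebraic characterization of trace fibers given by Lemma \ref{lem: trace as Az}. The strategy is to exhibit a subvariety $B\subset\Sigma(\Gamma)$ of positive codimension such that any eigenpair failing either conclusion (1) or (2) satisfies $\exp(ik\lv)\in B$.

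First, I would dispatch conclusion (1). By Lemma \ref{lem: sec man in U}, an eigenvalue $k^{2}>0$ is multiple exactly when $\exp(ik\lv)\in\msing(\Gamma)$, and by Corollary \ref{cor: singular set}, $\msing(\Gamma)$ has positive codimension in $\Sigma(\Gamma)$. So a direct application of Lemma \ref{lem: main genericty lem} with $B=\msing(\Gamma)$ gives that simplicity of the eigenvalue is strongly and ergodically generic. From now on I may restrict attention to $\bz=\exp(ik\lv)\in\mreg(\Gamma)$, where $\TS_{\bz}(\Gamma)$ is one-dimensional and spanned by a nonzero $\xv$.

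The second step is to polynomially encode the condition ``$q(\bz,\xv)=0$ on the trace fiber'' for $\bz\in\mreg(\Gamma)\cap Z(p)$. Let $A(\bz):=M\adj(\mathbb{I}_{2N}-U(\bz))M^{*}$ as in Lemma \ref{lem: trace as Az}, whose entries are polynomials in $\bz$. For $\bz\in\mreg(\Gamma)$, that lemma yields $A(\bz)=c_{\bz,\xv}\xv\xv^{*}$ with $c_{\bz,\xv}\ne 0$, so the $j$-th column satisfies $A_{j}(\bz)=c_{\bz,\xv}\bar{x}_{j}\xv$. Writing $d\ge 1$ for the degree of homogeneity of $q$ in $\xv$, define the polynomials in $\bz$
\[
Q_{j}(\bz):=q(\bz,A_{j}(\bz)),\qquad j=1,\ldots,4N.
\]
By homogeneity, $Q_{j}(\bz)=(c_{\bz,\xv}\bar{x}_{j})^{d}\,q(\bz,\xv)$, so for $\bz\in\mreg(\Gamma)$, the condition ``$q(\bz,\xv)=0$'' is equivalent to ``$Q_{j}(\bz)=0$ for every $j$'' (using any coordinate $j$ with $\bar{x}_{j}\ne 0$, which exists since $\xv\ne 0$).

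The third step uses the hypothesis together with the irreducibility lemma. By assumption there is $(\bz_{0},\xv_{0})\in\TS(\Gamma)$ with $\bz_{0}\in\mreg(\Gamma)\cap Z(p)$, $\xv_{0}\ne 0$, and $q(\bz_{0},\xv_{0})\ne 0$. Picking an index $j$ with $\bar{x}_{0,j}\ne 0$, the computation above gives $Q_{j}(\bz_{0})\ne 0$, so $Q_{j}$ does not vanish identically on $Z(p)\cap\T^{N}$. Since $p$ is either $\PG$ or an irreducible factor of $\PG$, Lemma \ref{lem: zariski} forces $Z(p)\cap Z(Q_{j})\cap\T^{N}$ to have real dimension at most $N-2$. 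Combining with step one, set
\[
B:=\msing(\Gamma)\ \cup\ \bigl(Z(p)\cap Z(Q_{j})\cap\T^{N}\bigr).
\]
This is a subvariety of $\Sigma(\Gamma)$ of positive codimension containing every $\bz=\exp(ik\lv)$ arising from a bad eigenpair, so Lemma \ref{lem: main genericty lem} applied to $B$ yields both strong and ergodic genericity. The one technical point to watch is that the columns $A_{j}(\bz)$ carry the conjugate $\bar{x}_{j}$, not $x_{j}$; but this only enters through a nonzero scalar at the witness point $\bz_{0}$, which suffices for Lemma \ref{lem: zariski}. No step requires new ideas beyond this bookkeeping, and Theorems \ref{thm: polynomial vertex conditions} and \ref{thm: polynomial vertex conditions mandarin} will then follow by choosing $p=\PG$, $p=P_{M,\mathrm{s}}$, or $p=P_{M,\mathrm{as}}$ according to Theorem \ref{Thm: Sarnak-Kurasov} and recalling Lemmas \ref{lem: Mandarin trace space} and \ref{lem: trace space for loops} to interpret which symmetry class the hypothesized witness eigenfunction lies in.
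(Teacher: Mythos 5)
Your proposal is correct and follows essentially the same route as the paper: conclusion (1) via $\msing(\Gamma)$ and Corollary \ref{cor: singular set}, and conclusion (2) by using the polynomial matrix $A(\bz)$ of Lemma \ref{lem: trace as Az} together with homogeneity to build base polynomials $Q_{j}$ satisfying $Q_{j}(\bz)=(c_{\bz,\xv}\bar{x}_{j})^{d}q(\bz,\xv)$, then invoking the witness point, Lemma \ref{lem: zariski}, and Lemma \ref{lem: main genericty lem}. Your formulation $Q_{j}(\bz)=q(\bz,A_{j}(\bz))$ is just a cleaner packaging of the paper's monomial-by-monomial definition, and using a single $Q_{j}$ (rather than the common zero set of all $4N$ of them) in the bad set $B$ is a harmless simplification since $q(\bz,\xv)=0$ forces every $Q_{j}$ to vanish.
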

To see why Theorem \ref{thm: unified} implies Theorems \ref{thm: polynomial vertex conditions} and \ref{thm: polynomial vertex conditions mandarin}, let us break down these theorems into assumption and resulting generic property. The assumption in Theorems \ref{thm: polynomial vertex conditions} and \ref{thm: polynomial vertex conditions mandarin} is that there exists an $ \lv $ and an eigenpair $ (k^2,f) $ of $ (\Gamma,\lv) $ such that $ k^2>0 $ and is a simple eigenvalue, $ q(\exp(ik\lv),\tr_{k}(f))\ne0$, and $ f $ has a certain ``symmetry type". The result is that this is the generic situation for eigenfunctions of that ``symmetry type". Namely, for a generic eigenpair $ (k^2,f) $, $ k^2>0 $ is simple and $ q(\exp(ik\lv),\tr_{k}(f))\ne0$ whenever $ f $ is of that ``symmetry type". The four ``symmetry types" are
\begin{enumerate}
	\item Theorem \ref{thm: polynomial vertex conditions} for graphs with no loops: Any $ f $.
	\item Theorem \ref{thm: polynomial vertex conditions} for graphs with loops: $ f $ is not supported on a single loops.  
	\item Theorem \ref{thm: polynomial vertex conditions mandarin} for mandarin graphs, first case: $ f $ is symmetric.
	\item Theorem \ref{thm: polynomial vertex conditions mandarin} for mandarin graphs, second case: $ f $ is anti-symmetric.
\end{enumerate}
As shown in Lemma \ref{lem: Mandarin trace space} and Lemma \ref{lem: trace space for loops}, the possible ``symmetry types" of $ f $ are captured by the decomposition of $ \dim(\TS_{\bz}) $ for $ \bz=\exp(ik\lv) $. In the case that $ k^2>0 $ is simple, equivalently $ \dim(\TS_{\bz})=1 $, there is only one possible ``symmetry type" and it is determined by the irreducible factor of $ \PG $ that vanish at $ \bz=\exp(ik\lv) $. We call this factor $ p $. 
\begin{enumerate}
	\item For Theorem \ref{thm: polynomial vertex conditions}, for graphs with no loops, $ p=\PG $.
	\item For Theorem \ref{thm: polynomial vertex conditions}, for graphs with loops, $ p=P_{\Gammasym} $.
	\item For Theorem \ref{thm: polynomial vertex conditions mandarin}, for symmetric eigenfunctions of mandarin graphs, $ p=P_{M,\mathrm{s}} $
	\item For Theorem \ref{thm: polynomial vertex conditions mandarin}, for anti-symmetric eigenfunctions of mandarin graphs, $ p=P_{M,\mathrm{as}} $
\end{enumerate}
Using the above dictionary it is a simple check to see that indeed Theorem \ref{thm: unified} implies Theorems \ref{thm: polynomial vertex conditions} and \ref{thm: polynomial vertex conditions mandarin}. We proceed with the proof of Theorem \ref{thm: unified}.  
\begin{proof}[Proof of theorem \ref{thm: unified}] We first state the following claim.\\
	\textbf{Claim:} There exist $ 4N $ polynomials $ Q_{j}\in\C[z_{1},\ldots,z_{n}]$ for $ j=1,2,\ldots,4N  $, such that for any $ (\bz,\xv)\in\TS(\Gamma) $ with $ \bz\in\mreg(\Gamma) $ and $ \xv\ne 0 $,
	\begin{equation}\label{eq: claim}
		q(\bz,\xv)=0\iff \bz\in Q_{j}(\bz)=0 \quad\mbox{for}\quad j=1,2,\ldots,4N.
	\end{equation}
We will first prove Theorem \ref{thm: unified} assuming the claim and then prove the claim. Define the variety $ V $ as the common zero set
\[V:=\set{\bz\in\C^{N}}{Q_{j}(\bz)=0 \quad\mbox{for}\quad j=1,2,\ldots,4N},\]
and the associated subvariety $ B\subset\Sigma(\Gamma)  $ by 
\[B=V\cap Z(p)\cap\T^{N}\cup\msing(\Gamma).\]
The subvariety $ B $ captures the negation of the generic properties (1) and (2) in Theorem \ref{thm: unified}. Clearly, if (1) fails, then $ \exp(ik\lv)\in\msing(\Gamma)\subset B  $. If (2) fails but not (1), then the point $ (\bz,\xv)=(\exp(ik\lv),\tr_{k}(f))\in\TS(\Gamma) $ has $ \bz\in\mreg $, $ \xv\ne 0 $, $ p(\bz)=0 $ and $ q(\bz,\xv)=0 $. Then $ \bz\in V $ according to the claim and so
\[\bz=\exp(ik\lv)\in V\cap Z(p)\cap\T^{N}\subset B.\] 
Applying Lemma \ref{lem: main genericty lem} to the subvariety $ B $ proves Theorem \ref{thm: unified}. To this end, we only need to show that $ B $ has positive co-dimension in $ \Sigma(\Gamma) $. In fact, it is enough to show that 
\begin{equation}\label{eq: dim V cap bla le N-2}
	\dim(V\cap Z(p)\cap\T^{N})\le N-2,
\end{equation}
since $ \dim(\msing(\Gamma))\le N-2 $ by Corollary \ref{cor: singular set}. By the assumption of Theorem \ref{thm: unified}, there exists a point $ (\bz,\xv)=(\exp(ik\lv),\tr_{k}(f))\in\TS(\Gamma) $ such that $ \bz\in\mreg $, $ \xv\ne 0 $, $ p(\bz)=0 $ and $ q(\bz,\xv)\ne0 $. According to the claim it means that 
\begin{equation}\label{eq: assumption and claim}
	\bz\in\cap Z(p)\setminus V, 
\end{equation}
so $ p $ (which is irreducible by our choice) is not a factor of at least one $ Q_{j} $ polynomial. We prove \eqref{eq: dim V cap bla le N-2} by applying Lemma \ref{lem: zariski} to $ p $ and this $ Q_{j} $ which gives  
\[\dim(V\cap Z(p)\cap\T^{N})\le\dim(Z(Q_{j})\cap Z(p)\cap\T^{N})\le N-2.\]
We conclude that $ \dim(B)\le N-2 $ which proves Theorem \ref{thm: unified} by Lemma \ref{lem: main genericty lem}.\\

We now prove the claim on which our proof is based. Write $ q(\bz,\xv) $ as the sum of $ K $ monomials, using multi-indices $ \textbf{a}_{n}\in(\N\cup\{0\})^{N} $ and $ \textbf{b}_{n}\in(\N\cup\{0\})^{4N} $ for $ n=1,\ldots,K $,
\[q(\bz,\xv)=\sum_{n=1}^{K}\bz^{a_{n}}\xv^{b_{n}}, \qquad \bz^{a_{n}}:=\prod_{j=1}^{N}z_{j}^{a_{n}(j)},\quad \xv^{a_{n}}:=\prod_{j=1}^{4N}x_{j}^{b_{n}(j)}\]
Recall that $ q(\bz,\xv) $ is homogeneous in $ \xv $, so there is some $ m\in\N\cup\{0\} $ such that 
\[|\textbf{b}_{n}|:= \sum_{j=1}^{4N}\textbf{b}_{n}(j)=m, \quad\mbox{for all}\quad n=1,2,\ldots K.\]
Consider the rank one matrix $ \xv\xv^{*} $ whose entries are $(\xv\xv^{*})_{i,j}= x_{i}\overline{x}_{j} $. The following holds
\[\overline{x}_{j}^{m}q(\bz,\xv)=\sum_{n=1}^{K}\bz^{a_{n}}\prod_{i=1}^{4N}(\xv\xv^{*})_{b_{n}(i),j}\quad\mbox{for all}\quad j=1,2,\ldots,4N.\]
Let $ A(\bz) $ be the $ 4N\times 4N $ matrix introduced in Lemma \ref{lem: trace as Az}, and define the polynomials,
\[Q_{j}(\bz):=\sum_{n=1}^{K}\bz^{a_{n}}\prod_{i=1}^{4N}(A(\bz))_{b_{n}(i),j} \quad\mbox{for all}\quad j=1,2,\ldots,4N.\]
These are indeed polynomials since the entries of $ A(\bz) $ are polynomials, by Lemma \ref{lem: trace as Az}. Fix a point $ (\bz,\xv)\in\TS(\Gamma) $ with $ \bz\in\mreg(\Gamma) $ and $ \xv\ne 0 $. According to Lemma \ref{lem: trace as Az}, 
\[A(\bz)=c_{\bz,\xv}\xv\xv^{*},\]
for some non-zero constant $ c_{\bz,\xv}\in\C\setminus\{0\} $. Therefore,
\[Q_{j}(\bz)=c_{\bz,\xv}^{m}(\overline{x_{j}})^{m}q(\bz,\xv) \quad\mbox{for every}\quad j=1,2,\ldots,4N.\]
If $ q(\bz,\xv)=0 $ then $ Q_{j}(\bz)=0 $ for all $ j $. For the other direction, assume that $ Q_{j}(\bz)=0 $ for all $ j $. Since $ \xv\ne 0 $ then $ x_{j}\ne0 $ for some $ j $, in which case $ Q_{j}(\bz)=0 $ implies $ q(\bz,\xv)=0 $. We conclude that for any point $ (\bz,\xv)\in\TS(\Gamma) $ with $ \bz\in\mreg(\Gamma) $ and $ \xv\ne 0 $,  
 \begin{equation*}
	q(\bz,\xv)=0\iff  Q_{j}(\bz)=0\quad\mbox{for every}\quad j=1,2,\ldots,4N.
\end{equation*}
\end{proof}
\subsection{Proof of Theorem \ref{thm: disjoint spectrum} - No common spectrum}
We remind the reader that Theorem \ref{thm: disjoint spectrum} considers the common spectrum, 
\[\spec(\Gamma,\lv)\cap\spec(\Gamma',\lv),\]
of two distinct\footnote{By distinct we mean non isomorphic.} graphs $ \Gamma $ and $ \Gamma' $ of same number of edges, assigned with the same edge lengths $ \lv=\lv' $. The theorem states that except for two cases, generically, there are no common eigenvalues. The two exceptional cases are: 
\begin{enumerate}
	\item[i)] If $ \Gamma $ and $ \Gamma' $ share a common  a common loops $ e_{j} $, then for any $ \lv=\lv' $,
	\[\frac{2\pi}{\ell_{j}}\N\subset\spec(\Gamma,\lv)\cap\spec(\Gamma',\lv), \]
		which means that the common spectrum has positive density,
	\[\liminf_{T\to\infty}\frac{|\spec(\Gamma,\lv)\cap\spec(\Gamma',\lv)\cap[0,T]|}{|\spec(\Gamma,\lv)\cap[0,T]|}\ge\frac{2L}{\ell_{j}},\qquad  L=\sum_{j=1}^{N}\ell_{j}.\]
	\item [ii)] If $ \Gamma $ is a mandarin graph and  $ \Gamma' $ is a flower graph, then for any $ \lv=\lv' $, the common spectrum is at least half of the spectrum, i.e.,
	\[\liminf_{T\to\infty}\frac{|\spec(\Gamma,\lv)\cap\spec(\Gamma',\lv)\cap[0,T]|}{|\spec(\Gamma,\lv)\cap[0,T]|}\ge\frac{1}{2}.\]
\end{enumerate}
Theorem \ref{thm: disjoint spectrum} can now follow from Theorem \ref{thm: unified} for $ \Gamma $ and $ q(\bz,\xv)=P_{\Gamma'} $, however, we will prove it using Lemma \ref{lem: main genericty lem} which was the main ingredient in the proof of Theorem \ref{thm: unified}.
\begin{proof}[Proof of Theorem \ref{thm: disjoint spectrum}]
Assume that $ \Gamma $ and $ \Gamma' $ are both graphs of $ N $ edges that satisfy Assumption \ref{ass: assumptions}. Recall that for any $ k\ge0 $, denoting $ \bz=\exp(ik\lv) $, we have 
\[k\in\spec(\Gamma,\lv)\cap\spec(\Gamma',\lv)\iff P_{\Gamma}(\bz)=0\quad\mbox{and}\quad P_{\Gamma'}(\bz)=0,\]
and define  \[B:=\Sigma(\Gamma)\cap\Sigma(\Gamma')=\set{\bz\in\T^{N}}{P_{\Gamma}(\bz)=0\quad\mbox{and}\quad P_{\Gamma'}(\bz)=0}.\]
Assume that $ \Gamma $ and $ \Gamma' $ are distinct, do not share a loop edge, and are not a pair of mandarin graph and flower graph. Then, the polynomials $ P_{\Gamma} $ and $ P_{\Gamma'} $ do not share any common factor, according to Lemma \ref{lem: no common factors for different graphs}, which means that $ B $ has positive co-dimension in $ \Sigma(\Gamma) $, by Lemma \ref{lem: zariski}. We conclude that $ B $ is a subvariety of $ \Sigma(\Gamma) $ that has positive co-dimension, so Lemma \ref{lem: main genericty lem} applies and the following holds: 
\begin{enumerate}
	\item The set of ``good" lengths 
	\begin{align*}
		G(B)= &\set{\lv\in\R_{+}^E}{\forall k>0,\quad\exp(ik\lv)\notin B}\\
		= & \set{\lv\in\R_{+}^E}{\spec(\Gamma,\lv)\cap\spec(\Gamma',\lv)=\{0\}},
	\end{align*}
	is strongly generic.
	\item For any $ \Q $-independent $ \lv $,
	\[\frac{|\set{k\in\spec (\Gamma,\lv)\cap [0,T]}{\exp(ik\lv)\in B}|}{|\spec(\Gamma,\lv)\cap[0,T]|}=\frac{|\spec (\Gamma,\lv)\cap\spec (\Gamma',\lv)\cap [0,T]|}{|\spec(\Gamma,\lv)\cap[0,T]|}\xrightarrow[T \to \infty]{} 0.\]
\end{enumerate}
This proves Theorem \ref{thm: disjoint spectrum}, except for the two special cases. \\

\textbf{Case i:} Assume that $ \Gamma $ and $ \Gamma' $ share a common loop $ e_{j} $, then according to Theorem \ref{Thm: Sarnak-Kurasov}, $ P_{\Gamma} $ and $ P_{\Gamma'} $ share a common factor $ (z_{j}-1) $, and so 
\[e^{ik\ell_{j}}=1\quad\Rightarrow\quad P_{\Gamma}(\exp(ik\lv))=0\quad\mbox{and}\quad P_{\Gamma'}(\exp(ik\lv))=0.\]
We conclude that $ k=\frac{2\pi}{\ell_{j}}n \in \spec (\Gamma,\lv)\cap\spec (\Gamma',\lv) $ for every $ n\in\N $, as needed. For the density statement we can write it as
\[|\spec(\Gamma,\lv)\cap\spec(\Gamma',\lv)\cap[0,T]|\ge\frac{\ell_{j}}{2\pi}T+O(1),\qquad T\to\infty.\]
Using the Weyl law, as stated in \cite[p. 95]{BerKuc_graphs} for example,
	\[\frac{|\spec(\Gamma,\lv)\cap\spec(\Gamma',\lv)\cap[0,T]|}{|\spec(\Gamma,\lv)\cap[0,T]|}\ge\frac{\frac{\ell_{j}}{2\pi}T+O(1)}{\frac{L}{\pi}T+O(1)}=\frac{\ell_{j}}{2L}+O(\frac{1}{T}),\qquad T\to\infty.\]\\

\textbf{Case ii:} Assume that $ \Gamma $ is a flower with $ N $ edges, i.e, every edge is a loop. According to the decomposition in Theorem \ref{Thm: Sarnak-Kurasov}, and the argument of case i, we have 
\begin{align*}
	|\set{k\in[0,T]}{P_{\Gammasym}(\exp(ik\lv))=0}|= & |\spec(\Gamma,\lv)\cap[0,T]|-\sum_{j=1}^{N}|\frac{2\pi}{\ell_{j}}\N \cap[0,T]|\\
	= & \frac{L}{\pi}T-\sum_{j=1}^{N}\frac{\ell_{j}}{2\pi}T+O(1),\\
	= & \frac{L}{2\pi}T+O(1),
\end{align*}
where we count, as usual, such that zeros of $ P_{\Gammasym}(\exp(ik\lv)) $ are repeated according to their degree and eigenvalues are repeated according their multiplicity. We conclude that
\[	\frac{|\set{k\in[0,T]}{P_{\Gammasym}(\exp(ik\lv))=0}|}{|\spec(\Gamma,\lv)\cap[0,T]|}=\frac{1}{2}+O(\frac{1}{T}),\qquad T\to\infty.\]
Now let $ \Gamma' $ be a mandarin graph with $ N $ edges. According to Lemma \ref{lem: mandarin and flower},
\[\set{k\in[0,T]}{P_{\Gammasym}(\exp(ik\lv))=0}\subset\spec(\Gamma,\lv)\cap\spec(\Gamma',\lv),\]
which finishes the proof. 
\end{proof}
 
\subsection{Proof of Theorem \ref{thm: non vanishing} - Non vanishing trace} Given a graph $ \Gamma $ with $ N $ edges, and $ \tilde{m} $ vertices of degree one, let $ m:=4N-m $, and number the entries of the associated trace vectors $ \xv=\tr_{k}(f) $ by
\[\xv=(x_{1},x_{2},\ldots,x_{m},0,0,\ldots,0),\]
such that the last $ \tilde{m}=4N-m $ entries are the Neumann entries (namely $ B_{j} $ or $ D_{j} $) corresponding to a vertex of degree one, and are therefore zero. In this way, Theorem \ref{thm: non vanishing} says that given a graph $ \Gamma $ that satisfy Assumption \ref{ass: assumptions}, the following properties of eigenpairs $ (k^2,f) $ of $ (\gamma,\lv) $ are strongly and ergodically generic in $ \lv $: 
\begin{enumerate}
	\item $\quad k^2>0 $ is simple, and
	\item  whenever $ f $ is not supported on a loop (if such exists),
	\[(\tr_{k}(f))_{j}\ne0,\quad\mbox{for all}\quad j=1,2,\ldots M.\] 
\end{enumerate}
We will prove Theorem \ref{thm: non vanishing} by applying Theorem \ref{thm: unified} to the polynomial $ q(\bz,\xv):=\prod_{j=1}^{m}x_{j} $ together with the following lemma.
\begin{lem}\label{lem: nonvanishing lemma}
	Let $ \Gamma $ be a graph satisfying Assumption \ref{ass: assumptions}, and fix an index $ j\in\{1,2,\ldots,m\} $. Then, there exist an $ \lv\in\R_{+}^{N} $ and an eigenpair $ (k^2,f) $ of $ (\Gamma,\lv) $, such that $ k^2 $ is a non-zero simple eigenvalue, $ f $ is not supported on a loop (if the graph has loops), and
	\[(\tr_{k}(f))_{j}\ne0.\]
\end{lem}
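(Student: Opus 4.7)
My plan is to produce, for each fixed index $j$, a concrete triple $(\lv_0, k_0^2, f_0)$ witnessing the conclusion. The entry $(\tr_k(f))_j$ is by construction either a Dirichlet value $f(v)$ or a normalized Neumann derivative $k^{-1} f'(v^{\pm})$ at a specified endpoint $v$ of a specified edge $e_i$; in the Neumann case $v$ has degree at least two since pendant Neumann entries are excluded. I will exploit the classical reduction of the uniform metric-graph Laplacian to a discrete spectral problem.

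Take $\lv_0 = \ell_0 \mathbf{1}$, so all edges have equal length $\ell_0$. Whenever $\sin(k\ell_0) \ne 0$, an eigenfunction $f$ with eigenvalue $k^2$ is uniquely determined by the vector $\phi\colon \V \to \R$ of its vertex values, via
\[
f|_{e_i}(t) = \phi(o(e_i))\, \frac{\sin(k(\ell_0-t))}{\sin(k\ell_0)} + \phi(\tau(e_i))\, \frac{\sin(kt)}{\sin(k\ell_0)},
\]
and $\phi$ satisfies a discrete eigenvalue equation with spectral parameter $\cos(k\ell_0)$. Under this correspondence the trace entries are $A_i = \phi(o(e_i))$, $C_i = \phi(\tau(e_i))$, while $B_i$ and $D_i$ are explicit linear combinations of $\phi(o(e_i))$ and $\phi(\tau(e_i))$ with coefficients in $\cos(k\ell_0)/\sin(k\ell_0)$. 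I will choose $\ell_0$ and a simple discrete eigenvalue $\cos(k_0 \ell_0)$ whose eigenvector $\phi$ satisfies the single non-vanishing condition required by entry $j$; this is a codimension-one condition on $\phi$, easily avoided. Moreover, $\phi \not\equiv 0$ on non-loop vertices rules out loop-support of $f_0$.

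The remaining task is to land on the correct irreducible factor $p$ of $\PG$. The discrete reduction naturally produces eigenfunctions that are symmetric under every reflection symmetry of $\Gamma$: for graphs with loops this selects the factor $P_{\Gammasym}$, exactly the component housing non-loop-supported eigenfunctions. For a mandarin graph, the symmetric versus antisymmetric class is controlled by imposing $\phi(v_1) = \pm \phi(v_2)$, which selects $P_{M,\mathrm{s}}$ or $P_{M,\mathrm{as}}$ respectively. The main obstacle is handling accidental coincidences: the discrete eigenvalue might fail to be simple at the chosen uniform $\ell_0$, the resulting $\bz_0 = \exp(ik_0 \lv_0)$ might lie in $\msing(\Gamma)$, or the chosen trace entry might vanish by a hidden symmetry identity of the uniform configuration. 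A final perturbation argument handles this: deforming $\ell_0$ and, if necessary, one additional edge length slightly away from uniformity while tracking the simple eigenpair analytically, one obtains $\lv \in \R_+^N$ and $k > 0$ for which $\exp(ik\lv) \in \mreg(\Gamma) \cap Z(p)$, $f$ is not supported on a single loop, and $(\tr_k f)_j \ne 0$ simultaneously.
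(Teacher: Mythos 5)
Your strategy (equilateral reduction to the discrete transition matrix, then perturb) is genuinely different from the paper's proof, which simply cites \cite{BerLiu_jmaa17} for the Dirichlet entries and \cite[Lemma 5.20]{Alon2020PHD} for the Neumann entries. Unfortunately the proposal has a gap precisely where the lemma is hardest. The equilateral configuration $\lv_0=\ell_0\mathbf{1}$ is the \emph{worst} possible starting point for the symmetric graphs the paper is most careful about: for a mandarin the transition matrix on two vertices has only the eigenvalues $\pm1$, so there are no discrete eigenvalues in $(-1,1)$ at all, and every nonzero eigenvalue of the equilateral mandarin is a ``Dirichlet-type'' eigenvalue ($\sin(k\ell_0)=0$) of multiplicity $N-1$; the same happens for flowers, and for, e.g., the complete graph $K_4$ the only nontrivial transition eigenvalue $-1/3$ has multiplicity $3$. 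In all these cases there is no simple nonzero eigenpair at the equilateral point, so the object you intend to perturb does not exist, and the perturbation would instead have to resolve a high-multiplicity eigenvalue and identify which branch carries a nonvanishing trace entry --- none of which is carried out.

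Two further points. First, the claim that the nonvanishing requirement is ``a codimension-one condition on $\phi$, easily avoided'' is not an argument: $\phi$ ranges over the finite set of eigenvectors of a \emph{fixed} matrix, so you must exhibit one that works. For a Neumann entry the condition reads $\phi(\tau(e))\ne\cos(k\ell_0)\,\phi(o(e))$, and you give no reason why some \emph{simple} discrete eigenvalue admits an eigenvector satisfying it (this is exactly the content of the thesis lemma the paper invokes). Second, the closing perturbation step is close to circular: if the chosen entry vanishes at $\lv_0$ ``by a hidden symmetry identity,'' you would need to show it is not identically zero on the relevant irreducible component near $\lv_0$ --- but producing a single configuration where it is nonzero is precisely what the lemma asserts. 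To salvage the approach you would need, at minimum, an explicit first-order perturbation computation showing the relevant entry has nonzero derivative in $\lv$ along the tracked eigenbranch, together with a separate treatment of the graphs whose equilateral spectrum is entirely degenerate.
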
 
\begin{proof}
	First consider the case where the $ j $-th coordinate of $ \tr_{k}(f) $ is a Dirichlet coordinate (namely $ A_{j'} $ or $ C_{j'} $ for some edge $ e_{j'} $) and therefore equal to the value of $ f $ at some vertex $ v $. In such case, the genericity result Berkolaiko and Liu in \cite{BerLiu_jmaa17} assures that there is an $ \lv\in\R_{+}^{N} $ (in fact a residual set of such) for which every eigenfunction which is not supported on a loop does not vanish at $ v $. Since not all eigenfunctions are supported on loops (as can be seen in the proof of Theorem \ref{thm: disjoint spectrum}) then we are done. \\

	Now consider the case of $ j$ such that the $ j $-th coordinate of $ \tr_{k}(f) $ is a Neumann coordinate, namely  it equals to $ \frac{1}{k} $ times the normal derivative of $ f $ along an edge $ e $ at a vertex $ v $ which is not of degree one. We may now use \cite[Lemma 5.20]{Alon2020PHD} which shows that there exists some $ \kv\in\R^{N}/2\pi\Z^{N} $ such that whenever $ \lv\in\R_{+}^{N} $ satisfies $ \exp(i\lv)=\exp(i\kv)$,  then $ k^2=1 $ is a simple eigenvalue of $ (\Gamma,\lv) $ with an eigenfunction $ f $ which is not supported on a loops (this is the meaning of the notation $ \kv\in\Sigma_{\mathcal{L}}^{c} $ in \cite{Alon2020PHD}), and furthermore the normal derivative of $ f $ along the edge $ e $ at the vertex $ v $ is non-zero (this is the meaning of the notation $ \partial_{e}f_{\kv}(v)\ne 0 $ in \cite{Alon2020PHD}). This proves the lemma.
\end{proof}
\begin{proof}[Proof of Theorem \ref{thm: non vanishing}]
	Let $ \Gamma $ be a graph with $ N $ edges that satisfies Assumption \ref{ass: assumptions}. Choose the irreducible polynomial $ p $ as follows. If $ \Gamma $ has loops, set $ p=P_{\Gammasym} $. If $ \Gamma $ is a mandarin, set $ p=P_{M,\mathrm{s}} $. Otherwise, $ \PG $ is irreducible and we set $ p=\PG $. According to Lemma \ref{lem: trace space for loops}, if $ \Gamma $ has loops then any eigenpair $ (k^2,f) $ of $ (\Gamma,\lv) $ with the properties that $ k^2 $ is non-zero and simple, and $ f $ is not supported on a loop, must satisfy
	\[\bz=\exp(ik\lv)\in Z(P_{\Gammasym})\cap\mreg(\Gamma)=Z(p)\cap\mreg(\Gamma).\]
	We may deduce from Lemma \ref{lem: nonvanishing lemma} that for any $ \Gamma $ which is not a mandarin 
	\begin{equation}\label{eq: existance eq non mandarin}
		\exists (\bz,\xv)\in\TS(\Gamma)\quad\mbox{such that}\quad \bz\in\mreg(\Gamma)\cap Z(p)\quad\mbox{and}\quad q(\xv)\ne0.
	\end{equation}
Notice that $ q(\xv)\ne0 $ implies $ \xv\ne0 $. This is the needed assumption for Theorem \ref{thm: unified} and we conclude that the properties 
\begin{enumerate}
	\item $\quad k^2>0 $ is simple, and
	\item $ q(\tr_{k}(f))_{j}\ne0 $ whenever $ f $ is not supported on a loop (if such exists),
\end{enumerate}  
are strongly and ergodically generic. We have proved Theorem \ref{thm: non vanishing} except for mandarin graphs.\\

Now assume that $ \Gamma $ is a mandarin graph, an orient all edges from one vertex, say $ v_{0} $ to the other, say $ v_{1} $. According to Lemma \ref{lem: nonvanishing lemma}, there exists an eigenpair $ (k^2,f) $ of $ (\Gamma,\lv) $ such that $ k^2 $ is a non-zero simple eigenvalue and $ q(\tr_{k}(f))=0 $. We claim that we may assume that $ f $ is symmetric. To see that, first assume that $ f $ is anti-symmetric, and consider its restriction to the edge $ e_{j} $
\[f|_{e_{j}}(t_{j})=A_{j}\cos(kt_{j})+B_{j}\cos(kt_{j}),\qquad t_{j}\in[0,\ell_{j}]\]
Define $ \lv'$ such that $ \ell'_{j}=\ell_{j}+\frac{\pi}{k} $ for all edges, and extend $ f $ to a function $ \tilde{f} $ on $ (\Gamma,\lv') $
\[\tilde{f}|_{e_{j}}(t_{j})=A_{j}\cos(kt_{j})+B_{j}\cos(kt_{j}),\qquad t_{j}\in[0,\ell_{j}+\frac{\pi}{k}].\]
It is not hard to conclude that $ \tr_{k}(\tilde{f}) $ and $ \tr_{k}(f) $ are related by 
\[(\tilde{A}_{j},\tilde{B}_{j},\tilde{C}_{j},\tilde{D}_{j})=(A_{j},B_{j},-C_{j},-D_{j}).\]
We may conclude that $ \tilde{f} $ is a symmetric eigenfunction of eigenvalue $ k^2 $ with $ q(\tr_{k}(\tilde{f}))\ne 0 $. To see that $ k^2 $ is also simple as an eigenvalue of $ (\Gamma,\lv') $, notice that this extension procedure is invertible and maps eigenspaces to eigenspaces, so it preserves multiplicity.\\

To conclude, we have shown that there exists an eigenpair $ (k^2,f) $ of $ (\Gamma,\lv) $ such that $ k^2 $ is a non-zero simple eigenvalue, $ q(\tr_{k}(f))=0 $, and $ f $ is symmetric, so \eqref{eq: existance eq non mandarin} holds and we may apply Theorem \ref{thm: unified} by which
\begin{enumerate}
	\item $\quad k^2>0 $ is simple, and
	\item $ q(\tr_{k}(f))\ne0 $ whenever $ f $ is symmetric,
\end{enumerate}  
are strongly and ergodically generic properties for a mandarin graph. If we set $ p=P_{M,\mathrm{as}} $, then the same argument proves that  
\begin{enumerate}
	\item[(3)] $ q(\tr_{k}(f))\ne0 $ whenever $ f $ is anti-symmetric,
\end{enumerate}  
is also strongly and ergodically generic. Since an eigenfunction of a simple eigenvalue is either symmetric or anti-symmetric, then we are done.
\end{proof}

\section{Future work}
\subsection{The conjecture of $ \Q $-independent spectrum}
In this paper we constructed a machinery for proving genericity statements for a single eigenpair. However, this machinery may be generalized to include relations between different eigenvalues and eigenfunctions, by considering products of $ \TS(\Gamma) $ and products of $ \Sigma(\Gamma) $. Kurasov and Sarnak has shown in \cite{KurSar2022} that when $ \lv $ is $ \Q $-independent, the spectrum $ \spec(\Gamma,\lv) $ has infinite dimension over $ \Q $. Following this result, Sarnak raised the question of whether $ \spec(\Gamma,\lv) $ is linearly independent over $ \Q $, for a generic $ \lv $. We believe that the answer is affirmative, and we write it as a conjecture.
\begin{conjecture}[$ \Q $-independent spectrum]
	For every graph $ \Gamma $, maybe except some pathological cases, there is a generic set $ G\subset\R_{+}^{N} $, such that for any $ \lv\in G $, the spectrum $ \spec(\Gamma,\lv) $ is linearly independent over $ \Q $. That is, if we number the non-zero square root eigenvalues in $ \spec(\Gamma,\lv) $ increasingly, $ k_{1}\le k_{2}\le k_{3}\le\ldots\nearrow \infty $. Then, for any $ n\in\N $, 
	\[\sum_{j=1}^{n}k_{j}q_{j}\ne0,\quad\mbox{for all}\quad \textbf{q}=(q_{1},\ldots,q_{n})\in\Q^{n}\setminus\{0\}. \]   
\end{conjecture}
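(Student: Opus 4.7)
The strategy is to reduce the conjecture to controlling, for each $n\in\N$ and each primitive $\mathbf{q}\in\Z^{n}\setminus\{0\}$, the set
\[
S_{n,\mathbf{q}}:=\set{\lv\in\R_{+}^{N}}{\exists\,k_{1},\ldots,k_{n}\in\spec(\Gamma,\lv)\text{ with }\sum_{j=1}^{n}q_{j}k_{j}=0},
\]
showing that each $S_{n,\mathbf{q}}$ is a countable union of closed subanalytic sets of real dimension at most $N-1$.  Since the set of primitive rational vectors is countable, the union $\bigcup_{n,\mathbf{q}}S_{n,\mathbf{q}}$ will then be strongly thin in the sense of Definition~\ref{def: strong and ergodic genericity}, and its complement gives the desired strongly generic set on which $\spec(\Gamma,\lv)$ is $\Q$-linearly independent.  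The first step is to package the rational relation as a subvariety of the product secular variety: writing $z_{m}^{(j)}=e^{ik_{j}\ell_{m}}$, the relation $\sum q_{j}k_{j}=0$ multiplied by $\ell_{m}$ and exponentiated yields $\prod_{j}(z_{m}^{(j)})^{q_{j}}=1$ for every $m=1,\ldots,N$; this carves out an algebraic subvariety $B_{n,\mathbf{q}}\subset\Sigma(\Gamma)^{n}\subset(\T^{N})^{n}$, and $S_{n,\mathbf{q}}$ is the projection to $\R_{+}^{N}$ of the preimage of $B_{n,\mathbf{q}}$ under the parametrization $(\lv,k_{1},\ldots,k_{n})\mapsto(\exp(ik_{j}\lv))_{j=1}^{n}$.

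I would then prove a product analogue of Lemma~\ref{lem: main genericty lem}.  Working directly in $\R_{+}^{N+n}$, the preimage of $B_{n,\mathbf{q}}$ is cut out by $n$ real codimension-one spectral equations $P_{\Gamma}(\exp(ik_{j}\lv))=0$ together with the single linear equation $\sum_{j}q_{j}k_{j}=0$.  The spectral equations are automatically transverse to one another, since each involves its own private $k_{j}$-variable; transversality of the linear relation with the spectral constraints should, at generic points, be extracted from the Hadamard-type derivative formula underlying Lemma~\ref{lem: vanishihng on an edge}, using that the $\lv$-gradients of the spectral constraints span a subspace which — at a generic $n$-tuple of non-vanishing eigenfunctions produced by Theorem~\ref{thm: non vanishing} — cannot accommodate the rational vector $\mathbf{q}$ in the required way.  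Granting transversality, a bounded box $[1/M,M]^{N+n}$ intersects the preimage in a closed semianalytic set of real dimension at most $N+n-(n+1)=N-1$; projecting to $\R_{+}^{N}$ preserves this bound, and the countable union over $M\in\N$ exhibits $S_{n,\mathbf{q}}$ as a countable union of closed subanalytic sets of positive codimension, exactly as required by Definition~\ref{def: strong and ergodic genericity}.

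The main obstacle will be showing that $B_{n,\mathbf{q}}$ is genuinely a proper subvariety of $\Sigma(\Gamma)^{n}$ — that is, that the diagonal multiplicative conditions $\prod_{j}(z_{m}^{(j)})^{q_{j}}=1$ are not already forced by membership in the product secular variety.  An equidistribution-style argument producing a single non-relational tuple would be circular, invoking the very $\Q$-independence the conjecture asserts.  The clean route is algebraic: extend the toral/irreducibility machinery of Lemma~\ref{lem: zariski} to the product setting and show that $\prod_{j=1}^{n}\PG(\bz^{(j)})$ shares no common irreducible factor with the diagonal polynomials $\prod_{j}(z_{m}^{(j)})^{q_{j}}-1$, except in the pathological cases the conjecture must exclude.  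Guided by Theorem~\ref{Thm: Sarnak-Kurasov} and Theorem~\ref{thm: disjoint spectrum}, I expect these exceptions to be the graphs with loops — where $\frac{2\pi}{\ell_{j}}\N\subset\spec(\Gamma,\lv)$ immediately produces infinite $\Q$-linear dependencies — and likely the mandarins, whose symmetric/antisymmetric spectral splitting may enforce further commensurabilities.  Establishing this product-irreducibility statement, presumably through a tensor-product analysis of the bond scattering matrix $S$ along the lines of Kurasov--Sarnak, appears to be the essential new ingredient.
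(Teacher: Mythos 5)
This statement is an open conjecture in the paper (Section 7.1): the paper gives no proof, only the remark that a product extension of Theorem~\ref{thm: unified} would reduce it to exhibiting, for each rational $\textbf{q}$, one $\lv$ with $\sum_j q_j k_j\ne 0$. Your proposal follows essentially that same reduction, and you correctly flag the unproven step yourself; but that step is the entire content of the conjecture, so what you have is a strategy outline, not a proof.

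The central gap is that your dimension count does not close the way the single-eigenvalue Lemma~\ref{lem: main genericty lem} does. There, $\dim(\tilde B)\le N-2$ forces $\dim(C_n)\le N-1$ because the map $(k,\lv)\mapsto k\lv$ into $\R^{N}$ has one-dimensional fibers. In the product setting, the joint spectral set $\set{(\exp(ik_1\lv),\ldots,\exp(ik_n\lv))}{k_j\in\spec(\Gamma,\lv)}$ has dimension only $N$, far below $\dim\Sigma(\Gamma)^n=n(N-1)$, so positive codimension of $B_{n,\textbf{q}}$ in $\Sigma(\Gamma)^n$ — which is in fact immediate, since the product structure lets you perturb one factor independently, and requires no tensor analysis of $S$ — gives no control whatsoever on the preimage: a codimension-one subvariety of $\Sigma(\Gamma)^n$ can contain the entire $N$-dimensional joint spectral image. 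What is actually needed is that no analytic branch relation $\sum_j q_j k_j(\lv)\equiv 0$ holds on an open set of $\lv$ (equivalently, that the joint spectral set is not contained in $B_{n,\textbf{q}}$, plus an irreducibility statement for that joint set, which is a question about spectral correlations, not about $\prod_j P_\Gamma(\bz^{(j)})$). Your transversality argument is circular on exactly this point: Theorem~\ref{thm: non vanishing} supplies information on a \emph{generic} set of $\lv$, whereas the failure of transversality must be excluded on the possibly full-dimensional bad set itself.

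Two smaller issues. As written, $S_{n,\textbf{q}}$ with $\textbf{q}=(1,-1)$ equals all of $\R_+^N$, since nothing prevents $k_1=k_2$; you must restrict to distinct eigenvalues, and note that a multiple eigenvalue is itself a rational dependence, so Corollary~\ref{cor: singular set} must be invoked first. On the positive side, your observation that graphs with loops are genuine exceptions (via $\tfrac{2\pi}{\ell_j}\N\subset\spec(\Gamma,\lv)$, which yields the dependence $2k_{(1)}-k_{(2)}=0$ among the loop eigenvalues) is correct and identifies at least part of the ``pathological cases'' the conjecture must exclude.
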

If we can extend Theorem \ref{thm: unified} to products of the trace space, then we would reduce the conjecture, for a given graph $ \Gamma $, to the following problem. For any rational vector $ \textbf{q}\in\Q^{n}\setminus\{0\} $, provide at least one choice of $ \lv $, such that the first $ n $ eigenvalues are simple and $ \sum_{j=1}^{n}k_{j}q_{j}\ne0 $. 
\subsection{The conjecture of full support eigenfunctions}
We say that an eigenfunction has full support if it does not vanish entirely on any edge,
\[f|_{e_{j}}\not\equiv 0 ,\quad\mbox{for all}\quad j=1,2,\ldots,N.\]
Conjecture 4.3 in \cite{HofKenMugPlu2021pleijel} states,
\begin{conjecture}[Full support eigenfunctions]\cite{HofKenMugPlu2021pleijel}
	For any metric graph $ (\Gamma,\lv) $, and any choice of a complete orthonormal sequence of eigenfunctions, there are infinitely many eigenfunctions with full support.
\end{conjecture}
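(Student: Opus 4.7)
The plan is to use Lemma \ref{lem: vanishihng on an edge} to translate the full-support property into an algebraic condition on the secular manifold. For a simple eigenvalue $k^{2}>0$ whose eigenfunction is not supported on a single loop, the eigenfunction has full support if and only if $\partial_{z_{j}}P_{\Gamma}(\bz)\ne0$ for every edge $j$, where $\bz=\exp(ik\lv)$. Using the Kurasov--Sarnak factorization (Theorem \ref{Thm: Sarnak-Kurasov}), this is equivalent to $\partial_{z_{j}}P_{\Gammasym}(\bz)\ne0$ for every $j$ (with the analogous condition involving $P_{M,\mathrm{s}}$ or $P_{M,\mathrm{as}}$ in the mandarin case). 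Let
\[
B:=\bigcup_{j=1}^{N}\bigl(Z(P_{\Gammasym})\cap Z(\partial_{z_{j}}P_{\Gammasym})\bigr)\cap\T^{N}
\]
be the associated ``bad'' subvariety.

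First I would establish the conjecture for $\Q$-independent $\lv$. The key observation is that $P_{\Gammasym}$ is irreducible with degree one or two in each $z_{j}$ (Remark \ref{rem: degree of PG}); a degree count in $z_{j}$ therefore prevents $P_{\Gammasym}$ from dividing any of its own partials, and Lemma \ref{lem: zariski} gives $\dim B\le N-2$. By Lemma \ref{lem: main genericty lem}, the proportion of eigenvalues $k$ with $\exp(ik\lv)\in B$ is asymptotically zero, while the proportion of loop-supported eigenvalues equals $\sum_{e_{j}\in\EL}\ell_{j}/(2L)<1$ (from the density computation in the proof of Theorem \ref{thm: disjoint spectrum}). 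These two facts together force a positive density of simple full-support eigenfunctions, settling the conjecture in this case, since any orthonormal basis of $L^{2}(\Gamma,\lv)$ must contain each such eigenfunction up to sign.

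The main obstacle is $\Q$-dependent $\lv$, where the sequence $\exp(ik\lv)$ is trapped in a proper closed subgroup $\mathbb{A}\subset\T^{N}$ and Lemma \ref{lem: main genericty lem} no longer applies. The natural attempt is to decompose $\lv=M\tilde\lv$ with $\tilde\lv\in\R_{+}^{d}$ being $\Q$-independent and $M$ an $N\times d$ integer matrix, pull back through the monomial embedding $\phi:\T^{d}\hookrightarrow\T^{N}$, and rerun the previous argument on $\T^{d}$ against the pullback polynomial $\phi^{\ast}P_{\Gammasym}$. This would require a sub-torus extension of the Kurasov--Sarnak irreducibility: one must verify that $\phi^{\ast}P_{\Gammasym}$ is squarefree and that its irreducible factors do not divide the corresponding pullback partial derivatives in the $\tilde\lv$ variables. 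I expect this to be the principal difficulty, since such a statement appears to go beyond the algebraic-geometric tools currently available for $P_{\Gamma}$. In parallel, a dimension argument on $\TS_{\bz}(\Gamma)$ using Lemma \ref{lem: trace space polynomial equation} would handle multi-dimensional eigenspaces: once one shows the subspace of eigenfunctions vanishing on a fixed edge is always proper inside $\eig(\Gamma,\lv,k)$, the complement in the unit sphere is open and dense, and any orthonormal basis of $L^{2}(\Gamma,\lv)$ must intersect it for infinitely many $k$.
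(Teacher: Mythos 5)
This statement is an open conjecture that the paper does \emph{not} prove; Section 7.2 only offers two lemmas (Lemma \ref{lem: full support1} and the subsequent fiber lemma) as possible tools. Your proposal therefore cannot be compared to a full proof, but it can be compared to the paper's proposed route and checked for internal soundness.

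The part of your argument covering $\Q$-independent $\lv$ is correct and recovers what the paper attributes to \cite{AloBanBer_cmp18}: the degree count in Remark \ref{rem: degree of PG} does show that the irreducible polynomial $P_{\Gammasym}$ cannot divide its own partial $\partial_{z_{j}}P_{\Gammasym}$, so Lemma \ref{lem: zariski} gives $\dim B\le N-2$, and Lemma \ref{lem: main genericty lem}(2) plus the loop-density computation from the proof of Theorem \ref{thm: disjoint spectrum} then yields a positive density of simple eigenvalues with full-support eigenfunctions. You also correctly flag that the hard case is the intermediate one (lengths that are neither rational multiples of one another nor $\Q$-independent), where $\exp(ik\lv)$ is confined to a proper subtorus and Lemma \ref{lem: main genericty lem} gives nothing.

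There is, however, a genuine flaw in your fallback for multi-dimensional eigenspaces. Knowing that, for each edge $e_{j}$, the subspace $\{f\in\eig(\Gamma,\lv,k):f|_{e_{j}}\equiv0\}$ is a proper subspace does make the set of full-support eigenfunctions open and dense in the unit sphere of $\eig(\Gamma,\lv,k)$, but an orthonormal basis is a \emph{finite} set of points and is under no obligation to meet a given open dense set. For instance, in a two-dimensional eigenspace spanned by orthogonal $f_{1},f_{2}$, it is perfectly possible that $f_{1}$ vanishes on one edge and $f_{2}$ on another, so the orthonormal basis $\{f_{1},f_{2}\}$ contains no full-support eigenfunction even though ``full support'' is open dense. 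To handle arbitrary bases as the conjecture requires, one must produce infinitely many \emph{simple} eigenvalues with full-support eigenfunctions; this is precisely what the paper's Lemma \ref{lem: full support1} delivers (one witness $\Rightarrow$ infinitely many, via recurrence of the path $t\mapsto\exp(it\lv)$ to a neighborhood where all $\partial_{z_{j}}P_{\Gamma}\ne0$), and it works for \emph{every} $\lv$, not only $\Q$-independent ones. Your proposal and the paper thus diverge on strategy for the hard case: you suggest pulling back $P_{\Gammasym}$ to the closure subgroup $\mathbb{A}\subset\T^{N}$ via a monomial map and reproving irreducibility/squarefreeness there, whereas the paper reduces to a geometric statement (via the fiber lemma: the path $t\mapsto\exp(it\lv)$ cannot meet $\Sigma(\Gamma)$ only along positive-dimensional coordinate subtori). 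Both routes reach the same open problem, and you are right that the missing ingredient is an irreducibility statement on subtori that current tools do not provide.
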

This conjecture was confirmed in \cite{HofKenMugPlu2021pleijel} for any graph with $ \lv $ proportional to a rational vector. The case of $ \Q $-independent $ \lv $ follows from the ergodic genericity proved in \cite{AloBanBer_cmp18}. The conjecture was proven for all tree graphs (and graphs with Dirichlet conditions) in \cite{PluTau2021fully}. Let us provide two lemmas that may lead to progress in proving this conjecture in general.
\begin{lem}\label{lem: full support1}
	Consider a metric graph $ (\Gamma,\lv) $. If there exists one non-zero eigenvalue which is simple and has eigenfunction with full support, then there are infinitely many such simple eigenvalues whose eigenfunctions have full support. 
\end{lem}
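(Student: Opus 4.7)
My plan is to restate the conclusion as a recurrence statement about the torus flow $k\mapsto\exp(ik\lv)$ on $\T^{N}$, and then run a perturbation argument powered by the almost-periodic recurrence of this flow. Combining Lemma \ref{lem: sec man in U} with Lemma \ref{lem: vanishihng on an edge}, an eigenpair $(k^{2},f)$ with $k>0$ has $k^{2}$ simple and $f$ of full support if and only if $\bz:=\exp(ik\lv)$ lies in the relatively open subset
\[
U:=\mreg(\Gamma)\cap\bigcap_{j=1}^{N}\{\bz\in\Sigma(\Gamma):\partial_{z_{j}}\PG(\bz)\neq 0\}.
\]
By hypothesis $\bz_{0}:=\exp(ik_{0}\lv)\in U$ for some $k_{0}>0$ in $\spec(\Gamma,\lv)$. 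Fix an open neighborhood $W\subset\T^{N}$ of $\bz_{0}$ with $W\cap\Sigma(\Gamma)\subset U$; the claim reduces to producing infinitely many $k\in\spec(\Gamma,\lv)$ with $\exp(ik\lv)\in W$.

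Next I analyze $F(k):=\PG(\exp(ik\lv))$. This is an entire trigonometric polynomial whose real zeros (counted with multiplicity) form $\spec(\Gamma,\lv)$. A short manipulation of $\det(\mathbb{I}_{2N}-\diag(\bz,\bz)S)$ using the real orthogonality of $S$ yields the identity $\overline{F(k)}=e^{-2ikL}\det(S)F(k)$ with $L=\sum_{j}\ell_{j}$, and hence a factorization $F(k)=e^{ikL}g(k)$ in which $g$ is real-analytic and real-valued (up to multiplication by $i$ if $\det S=-1$); in particular the real zeros of $g$ are exactly $\spec(\Gamma,\lv)$. The Jacobi/adjugate computation used in the proof of Lemma \ref{lem: sec man in U} gives
\[
F'(k_{0})=ic_{\bz_{0}}\sum_{j=1}^{N}\ell_{j}\bigl(|a_{j}|^{2}+|b_{j}|^{2}\bigr)\neq 0,
\]
so $k_{0}$ is a simple real zero of $g$: there exist $\delta>0$ and $c>0$ such that $|g'|\ge c$ on $[k_{0}-\delta,k_{0}+\delta]$ and $g$ takes opposite signs at the endpoints of this interval.

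The final step is almost-periodic recurrence. The orbit closure $T':=\overline{\{\exp(ik\lv):k\in\R\}}$ is a compact connected subgroup of $\T^{N}$ containing the identity, so there is a sequence of return times $\tau_{n}\to+\infty$ with $\exp(i\tau_{n}\lv)\to 1$. Since $F$ is a trigonometric polynomial with frequencies in the $\Z$-span of $\ell_{1},\ldots,\ell_{N}$, both $g$ and $g'$ tend uniformly to their $\tau_{n}$-translates on any bounded interval. Hence, for $n$ large, $g$ still changes sign across $[k_{0}+\tau_{n}-\delta,k_{0}+\tau_{n}+\delta]$ while $|g'|\ge c/2$ there, so the intermediate value theorem yields a simple real zero $k_{n}$ in this interval. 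Then $k_{n}\in\spec(\Gamma,\lv)$ and, since $\mreg(\Gamma)$ is open and $\exp(ik_{n}\lv)\to\bz_{0}\in\mreg(\Gamma)$, the eigenvalue $k_{n}^{2}$ is simple for large $n$. Moreover $\exp(ik_{n}\lv)\to\bz_{0}\in W$, so eventually $\exp(ik_{n}\lv)\in W\subset U$ and the corresponding eigenfunction has full support. Since $k_{n}\to\infty$ these are infinitely many distinct simple eigenvalues with full-support eigenfunctions.

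The main obstacle is the analytic bookkeeping in the middle paragraph: verifying the factorization $F=e^{ikL}g$ with real-valued $g$ (which hinges on the real orthogonality of the bond scattering matrix $S$ produced by the standard vertex conditions) and extracting $g'(k_{0})\neq 0$ from the adjugate identity rather than directly from $\bz_{0}\in U$. Once the reduction to a real-valued almost-periodic $g$ is in place, the recurrence of $g$ and $g'$ is immediate from $F$ being a trigonometric polynomial, and the intermediate value step is elementary.
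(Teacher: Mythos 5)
Your proof is correct and follows essentially the same strategy as the paper: reduce to the open condition $\partial_{z_j}P_\Gamma(\exp(ik\lv))\neq 0$ for all $j$ via Lemma \ref{lem: vanishihng on an edge}, then use recurrence of the path $t\mapsto\exp(it\lv)$ to the neighborhood of $\exp(ik_0\lv)$ where this condition persists. The one difference is that the paper simply asserts that this path intersects any $\Sigma(\Gamma)$-neighborhood of $\exp(ik_0\lv)$ infinitely often, whereas you justify that step rigorously via the reduction of the secular function to a real-valued almost-periodic function $g$ with a simple sign-changing zero at $k_0$ and the intermediate value theorem near the return times $\tau_n$ — a worthwhile detail that the paper leaves implicit.
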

\begin{proof}
	Using Lemma \ref{lem: vanishihng on an edge}, we may say that a non-zero $ k\in\spec(\Gamma,\lv) $ is simple and has eigenfunction of full support, if 
	\begin{equation}\label{eq: non vanishihng derivatives}
		\frac{\partial}{\partial z_{j}}\PG(\exp(ik\lv))\ne0\quad\mbox{for every}\quad j=1,2,\ldots,N. 
	\end{equation}
	Given such $ k $, consider the infinite path, $ \exp(it\lv) $ for $ t\in\R_{+} $. This path intersects any $ \Sigma(\Gamma) $ neighborhood of the point $ \exp(ik\lv) $ infinitely often. Taking a small enough neighborhood so that the derivatives of $ \PG $ remains non-zero, we get an infinite sub-sequence of $ \spec(\Gamma,\lv) $ of square root eigenvalues that satisfy \eqref{eq: non vanishihng derivatives}, and hence each of these square root eigenvalues is simple, with eigenfunctions that has full support.
\end{proof}
Lemma \ref{lem: full support1}, implicitly, is used in \cite{PluTau2021fully}, where the fact that the graph is a tree allows to omit the $ k\ne0 $ restriction, and then one can take $ k=0 $, which is simple and has a constant eigenfunction (and hence of full support). The next lemma uses the trace space to capture the property of ``not having full support" in terms of the secular manifold.
\begin{lem}
	Given a metric graph $ (\Gamma,\lv)$, let $ f $ be an eigenfunction with a non zero eigenvalue $ k^2\ne 0 $, and let $ \xv=\tr_{k}(f) $. Let $ \mathrm{supp}(f)\subset\E $ be the set of edges $ e_{j} $ for which $ f|_{e_{j}}\not\equiv 0 $ and let $ s_{j}:=e^{ik\ell_{j}} $ for every $ e_{j}\in\mathrm{supp}(f)$. Then, the $ \bz $ fiber 
	\[\TS(\Gamma)_{\xv}:=\set{\bz\in\Sigma(\Gamma)}{(\bz,\xv)\in\TS(\Gamma)},\]
	is an $ m=N-|\mathrm{supp}(f)| $ dimensional torus inside $ \Sigma(\Gamma) $, given by
		\[\TS(\Gamma)_{\xv}=\set{\bz\in\T^{N}}{z_{j}=s_{j}\quad\mbox{for every }\quad e_{j}\in \mathrm{supp}(f)}.\]
\end{lem}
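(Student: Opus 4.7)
The plan is to directly unpack the definitions of $\TS(\Gamma)$ and $\Sigma(\Gamma)$ via the polynomial characterization given in Lemma \ref{lem: trace space polynomial equation}, and observe that the $\bz$-dependence of the defining equations is entirely carried by the edge conditions, one edge at a time. The vertex condition $P_{\mathrm{std}}\xv = 0$ involves only $\xv$ (no $\bz$), so it is automatically satisfied because $\xv = \tr_k(f)$ comes from a genuine eigenfunction. Thus $(\bz,\xv) \in \TS(\Gamma)$ if and only if, for every edge $e_j$, the pair of edge equations
\[A_j + iB_j - z_j(C_j - iD_j) = 0, \qquad C_j + iD_j - z_j(A_j - iB_j) = 0\]
is satisfied by the $j$-th block of $\xv$ and by $z_j$.

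I would then analyze these edge equations on an edge-by-edge basis. For $e_j \notin \mathrm{supp}(f)$ we have $f|_{e_j} \equiv 0$, so $(A_j,B_j,C_j,D_j) = (0,0,0,0)$ and both equations hold for every $z_j \in \T$, leaving $z_j$ completely free. For $e_j \in \mathrm{supp}(f)$, the restriction $f|_{e_j}(t_j) = A_j\cos(kt_j) + B_j\sin(kt_j)$ is not identically zero, so $(A_j,B_j) \neq 0$ and hence $A_j - iB_j \neq 0$ (and likewise $C_j - iD_j \neq 0$ by Remark \ref{rem: edge equation solutions}). Therefore the first edge equation determines $z_j$ uniquely as $z_j = (A_j + iB_j)/(C_j - iD_j)$. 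Since $(\bz,\xv) = (\exp(ik\lv), \tr_k(f)) \in \TS(\Gamma)$ with this particular $\bz$ already satisfies the edge equations, the unique $z_j$ forced by the equation is $s_j = e^{ik\ell_j}$.

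Combining the two cases, $(\bz,\xv) \in \TS(\Gamma)$ is equivalent to $z_j = s_j$ for every $e_j \in \mathrm{supp}(f)$, with the remaining $N - |\mathrm{supp}(f)|$ coordinates in $\T$ unconstrained. This describes an embedded sub-torus of $\T^N$ of dimension $m = N - |\mathrm{supp}(f)|$. Containment in $\Sigma(\Gamma)$ is automatic: each such $\bz$ appears as the first coordinate of a point in $\TS(\Gamma)$, and $\Sigma(\Gamma)$ is by definition the projection of $\TS(\Gamma)$ onto its $\T^N$ factor.

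There is no real obstacle here; the only point that requires a small argument is the uniqueness of $z_j$ on supported edges, which amounts to ruling out the degenerate case $(A_j,B_j,C_j,D_j) = 0$ — and this is precisely what "$e_j \in \mathrm{supp}(f)$" excludes. The second edge equation then gives no additional constraint, being forced (up to the equality $\|(A_j,B_j)\| = \|(C_j,D_j)\|$ from Remark \ref{rem: edge equation solutions}) by the first.
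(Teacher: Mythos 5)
Your proof is correct and follows essentially the same route as the paper's: both reduce the claim to the defining equations of Lemma \ref{lem: trace space polynomial equation}, observe that the vertex condition is $\bz$-independent, and then solve the edge equations edge by edge, with zero blocks leaving $z_{j}$ free and nonzero blocks forcing the unique solution $z_{j}=s_{j}=e^{ik\ell_{j}}$. The extra care you take in justifying uniqueness of $z_{j}$ on supported edges (via the nonvanishing of $C_{j}-iD_{j}$, using that the trace of a real eigenfunction is real) is a detail the paper's proof asserts without comment.
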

 \begin{proof}
 	Let $( A_{j},B_{j},C_{j},D_{j})$ be the restriction of $ \xv $ to the edge $ e_{j} $. Since $ \xv $ satisfies the vertex conditions, then according to Lemma \ref{lem: trace space polynomial equation}, $ \bz\in\TS(\Gamma)_{\xv} $ if and only if
 		\begin{align}
 		 A_{j}+iB_{j}- z_{j}(C_{j}-iD_{j}) & =0\\
 		 C_{j}+iD_{j}- z_{j}(A_{j}-iB_{j}) & =0,
 	\end{align}
for every edge $ e_{j} $. If $ e_{j}\notin\mathrm{supp}(f) $, namely $( A_{j},B_{j},C_{j},D_{j})=0  $, then any $ z_{j} $ solves these equations. If $ e_{j}\in\mathrm{supp}(f) $, namely $( A_{j},B_{j},C_{j},D_{j})\ne0  $, then these two equations have a unique $ z_{j} $ solution. Since we are given a point $ \exp(ik\lv)\in\TS(\Gamma)_{\xv} $, then the $ z_{j} $ solution for $ e_{j}\in\mathrm{supp}(f) $ is 
\[z_{j}=e^{ik\ell_{j}}=:s_{j}.\]
 \end{proof}
 Using the above two lemmas, one may prove the conjecture by showing that there is no $ \lv\in\R_{+}^{N} $ for which the path $ t\mapsto\exp(it\lv) $ intersects $ \Sigma(\Gamma) $ only at positive dimensional subtori as above. It is possible that such a claim can be approached using algebraic tools.
\subsection{The co-dimension of the singular set $ \msing(\Gamma) $}
In \cite{CdV_ahp15}, below the proof of Proposition 1.1, the author conjecture that in the cases where $ \Sigma(\Gamma) $ is irreducible, the singular set $ \msing(\Gamma) $ has real dimension
\[\dim(\msing(\Gamma))\le N-3.\]
Consider the vertical fiber,
\[\TS(\Gamma)_{\bz}:=\set{\xv\in\C^{4N}}{(\bz,\xv)\in\TS(\Gamma)},\]
which is a complex vector space. Recall that 
\[\bz\in\msing(\Gamma)\iff \dim(\TS(\Gamma)_{\bz})\ge 2.\]
Since $ \TS(\Gamma) $ is defined in $ \T^{N}\times\C^{4N} $ by $ 4N $ linear equations with real coefficients, then we expect $ \TS(\Gamma) $ to be an $ N $ dimensional manifold. However this requires a transversality argument, i.e., showing that the rank of the Jacobian is always maximal. If this is the case, then the submanifold  
\[\Sigma_{d}(\Gamma):=\set{\bz\in\Sigma(\Gamma)}{\dim(\TS(\Gamma)_{\bz})=d}, \]
should have dimension at most $ N-d $. If this is true, then to prove the conjecture, one only needs to deal with $ \Sigma_{2}(\Gamma) $, namely eigenvalues with multiplicity exactly $ 2 $. One may ask if such multiplicity can exist without any symmetry of degree $ 2 $. We believe that this approach might lead to a proof for this conjecture.    
	\appendix
	\section{Algebraic varieties intersected with the torus}
	\begin{lem}\label{lem: real dimension bounded by complex dimension}
		Consider an algebraic set (or variety) $ V\subset\C^{N} $ of (complex) dimension $ n $, then $ V\cap\T^{N} $ has real dimension at most $ n $.
	\end{lem}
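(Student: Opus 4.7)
My plan is to reduce to the irreducible case and then to induct on the complex dimension $n$. Decomposing $V$ into its finitely many irreducible components, and using that the real dimension of a finite union is the maximum of the real dimensions, it suffices to treat an irreducible $V$. The base case $n=0$ is immediate because $V$ is then a finite set. For the inductive step, let $V$ be irreducible of complex dimension $n$, and let $V_{\mathrm{sing}}$ be its singular locus, which is a subvariety of complex dimension at most $n-1$. By the inductive hypothesis $\dim_{\R}(V_{\mathrm{sing}}\cap\T^{N})\le n-1$, so it suffices to prove $\dim_{\R}(V_{\mathrm{reg}}\cap\T^{N})\le n$, where $V_{\mathrm{reg}}:=V\setminus V_{\mathrm{sing}}$ is a complex manifold of complex dimension $n$.

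The heart of the argument is a tangent-space computation exploiting the fact that $\T^{N}$ is a \emph{maximally totally real} submanifold of $\C^{N}$: at each point $p=(z_{1},\ldots,z_{N})\in\T^{N}$, identifying $T_{p}\C^{N}\cong\C^{N}$, one has $T_{p}\T^{N}=\mathrm{span}_{\R}\{iz_{1}e_{1},\ldots,iz_{N}e_{N}\}$, and in particular $T_{p}\T^{N}\cap i\,T_{p}\T^{N}=\{0\}$. The set $V_{\mathrm{reg}}\cap\T^{N}$ is a real semianalytic subset of $\C^{N}$, so by a standard stratification it contains a smooth top-dimensional stratum realising its real dimension $m:=\dim_{\R}(V_{\mathrm{reg}}\cap\T^{N})$. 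At a point $p$ of this stratum, the tangent space to the intersection sits inside $W:=T_{p}V_{\mathrm{reg}}\cap T_{p}\T^{N}$. Since $W\subset T_{p}\T^{N}$, the totally real property forces $W\cap iW=\{0\}$; and since $T_{p}V_{\mathrm{reg}}$ is a complex subspace, $iW\subset T_{p}V_{\mathrm{reg}}$, hence $W+iW\subset T_{p}V_{\mathrm{reg}}$. Combining,
\[2m\le 2\dim_{\R}(W)=\dim_{\R}(W+iW)\le\dim_{\R}(T_{p}V_{\mathrm{reg}})=2n,\]
so $m\le n$, closing the induction.

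The linear-algebraic core is clean, and the main technical step I expect to require care is the stratification: namely, the existence of a smooth real-analytic stratum of $V_{\mathrm{reg}}\cap\T^{N}$ of its top real dimension at which $T_{p}(V_{\mathrm{reg}}\cap\T^{N})$ genuinely makes sense. This is standard in the subanalytic framework already in play in the paper (Whitney stratification, or an independent induction noting that the singular locus of a semianalytic set has strictly smaller dimension), and it can alternatively be avoided by phrasing the bound pointwise in terms of tangent cones, since the (real) tangent cone of $V_{\mathrm{reg}}\cap\T^{N}$ at $p$ is still contained in the subspace $W$ above and bounds the local real dimension from above.
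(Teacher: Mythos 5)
Your proof is correct and rests on essentially the same idea as the paper's: at a smooth point of $V\cap\T^N$, the real tangent space is forced to be small because it sits inside the complex tangent space of $V$ while also lying in a totally real subspace, and one handles the remaining points by passing to the singular locus. The paper phrases this computationally — pulling back via the diffeomorphism $\xv\mapsto\exp(i\xv)$, observing that real orthonormal vectors in $\ker DF(\xv)$ map under the unitary $u(\bz)=i\,\mathrm{diag}(\bz)$ to $\C$-orthonormal vectors in $\ker DQ(\bz)$ — which is precisely the totally real property of $\T^N$ in coordinates; your invariant formulation via $W\cap iW=\{0\}$ and $W+iW\subset T_pV_{\mathrm{reg}}$ is a cleaner packaging of the same linear algebra, and your explicit flag of the stratification technicality (or tangent-cone workaround) is a point the paper glosses over.
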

	\begin{proof}
		We consider the case where $ V $ is the common zero set of $ p_{1},p_{2},\ldots,p_{m} $ are distinct irreducible polynomials, which we write as 
		\[V=Z(p_{1})\cup Z(p_{2})\ldots\cup Z(p_{m}).\]
		Since any variety is a finite union of zero sets as above, then it is sufficient to prove the statement for this case.\\
		
		Let $ Q=(p_{1},p_{2},\ldots,p_{m}) $ so that $ Q:\C^{N}\to \C^{m} $ and let $ F(\xv):=Q(\exp(i\xv)) $. We denote their derivatives by $ DF $ and $ DQ $. The derivatives at a point $ \bz=\exp(i\xv)\in\T^{N} $ are $ m\times N $ matrices, which are related by the diagonal unitary matrix $u(\bz):= i\diag(\bz) $,
		\begin{equation}
			DF(\xv)=DQ(\bz)u(\bz),\qquad \bz=\exp(i\xv)\in\T^{N}.
		\end{equation} 
		Assume that $  V\cap\T^{N} $ has real dimension $ n' $, so that we want to show $ n'\le n $. Since the exponent is a diffeomorhpism between $ \R^{N}/2\pi\Z^{N} $ and $ \T^{N} $, then
		\[O:=\set{\xv\in\R^{N}/2\pi\Z^{N}}{\exp(i\xv)\in V\cap\T^{N}},\] has real dimension $ n' $ and $ \exp(i\xv) $ is a regular point of $  V\cap\T^{N} $ if and only if $ \xv $ is a regular point of $ O $. Notice that $ O $ is the zero set of $ F $, and therefore at any regular point $ \xv\in O $, the tangent space $ T_{\xv}O $ is $ n' $ dimensional real vector space, given by the right kernel of the matrix $ DF(\xv) $. In particular, $ T_{\xv}O $ has an orthonormal basis of $ n' $ real vectors $\{\textbf{a}_{1},\textbf{a}_{2},\ldots,\textbf{a}_{n'}\} $, such that 
		\[DF(\xv)\textbf{a}_{j}=0,\qquad j=1,2,\ldots n'.\]
		Define the vectors $\textbf{b}_{j}:=u(\bz)\textbf{a}_{j}$ for all $ j $. Since $ u(\bz) $ is unitary, these are $ n' $ complex orthonormal vectors. They satisfy 
		\[DF(\xv)\textbf{a}_{j}=DQ(\bz)\textbf{b}_{j}=0,\qquad j=1,2,\ldots n',\]
		so we conclude that the kernel of $ DQ(\bz) $ has complex dimension at least $ n' $.\\
		Now, assume that there exists a regular point of $ V $ in $ V\cap\T^{N} $, say $ \bz=\exp(i\xv) $, then the tangent space $ T_{z}V $ at $ \bz $ is $ n $ dimensional and equals to the kernel of $ DQ(\bz) $. Hence,
		\[n'\le n.\]
		On the other hand if there are no regular points of $ V $ in $  V\cap\T^{N} $, then $  V\cap\T^{N} $ is contained in the singular set of $ V $, say $ V^{\mathrm{sing}} $, which is a variety of dimension strictly smaller than $ n $. Set $ V=V_{0} $ and $ n_{0}=n $ and let $ V_{j+1} $ be the singular set of $ V_{j} $ with $ n_{j+1}=\dim (V_{j+1}) $. We may conclude that for any $ j $, either $  V_{j}\cap\T^{N} $ has real dimension $ n'\le n_{j} $ or $  V_{j}\cap\T^{N}=  V_{j+1}\cap\T^{N}  $. Since the dimensions $ n_{j} $ are strictly decreasing this must end after at most $ n=n_{0} $ steps and provide the answer $ n'\le n=n_{0} $. 
\end{proof}
	\bibliographystyle{siam}
\bibliography{GlobalBib,bib}
\end{document}